\newtheorem{fact}{Fact}
\renewcommand{\qed}{\hfill $\square$}
\begin{document}
\mainmatter              
\title{On supercompactness of $\omega_1$}
\titlerunning{$\omega_1$ supercompact}  
%
\author{Daisuke Ikegami\inst{1} \and Nam Trang\inst{2}}
\authorrunning{Ikegami and Trang} 
%
\tocauthor{Daisuke Ikegami and Nam Trang}
\institute{SIT Research Laboratories, Shibaura Institute of Technology, \\
3-7-5 Toyosu Koto-ward, Tokyo 135-8548, JAPAN,\\
\email{ikegami@shibaura-it.ac.jp},\\ WWW home page:
\texttt{http://www.sic.shibaura-it.ac.jp/\~{}ikegami/}
\and
Department of Mathematics, University of North Texas,\\
1155 Union Circle \#311430, Denton, TX 76203-5017, USA,\\
\email{Nam.Trang@unt.edu},\\ WWW home page:
\texttt{http://www.math.unt.edu/\~{}ntrang/}
}

\maketitle              

\begin{abstract}
This paper studies structural consequences of supercompactness of $\omega_1$ under $\sf{ZF}$. We show that the Axiom of Dependent Choice $(\sf{DC})$ follows from ``$\omega_1$ is supercompact". ``$\omega_1$ is supercompact" also implies that $\sf{AD}^+$, a strengthening of the Axiom of Determinacy $(\sf{AD})$, is equivalent to $\sf{AD}_\mathbb{R}$. It is shown that ``$\omega_1$ is supercompact" does not imply $\sf{AD}$. The most one can hope for is Suslin co-Suslin determinacy. We show that this follows from ``$\omega_1$ is supercompact" and Hod Pair Capturing $(\sf{HPC})$, an inner-model theoretic hypothesis that imposes certain smallness conditions on the universe of sets. ``$\omega_1$ is supercompact" on its own implies that every Suslin co-Suslin set is the projection of a determined (in fact, homogenously Suslin) set. ``$\omega_1$ is supercompact" also implies all sets in the Chang model have all the usual regularity properties, like Lebesgue measurability and the Baire property.
\keywords{Large cardinal properties, supercompactness, $\omega_1$, Axiom of Determinacy}

\medskip

{\bf Mathematics Subject Classification:} 03E55$\ \cdot \ $03E60$\ \cdot \ $03E45
\end{abstract}

\section{Introduction}
Under $\sf{ZFC}$, successor cardinals (like $\omega_1$) are ``small". If $\alpha = \beta^+$ is a successor cardinal, then there is an injection from $\alpha$ into $\mathcal{P}(\beta)$.\footnote{This is equivalent to ``there is a surjection from $\mathcal{P}(\beta)$ onto $\alpha$" under $\sf{ZFC}$. Without the Axiom of Choice, the equivalence can fail.} Without the Axiom of Choice, it is possible for successor cardinals like $\omega_1$ to exhibit large cardinal properties. For instance, it has been known since the 1960's that $\omega_1$ can be measurable under $\sf{ZF}$; this in particular implies that $\omega_1$ is regular and there is no injection of $\omega_1$ into $\mathcal{P}(\omega)$. We believe this result is independently due to Jech (\cite{Jech}) and Takeuti (\cite{Takeuti}). Furthermore, Takeuti, in the same paper \cite{Takeuti}, is able to show that ``$\sf{ZF} + \omega_1$ is supercompact" is consistent relative to ``$\sf{ZFC} + $ there is a supercompact cardinal". The Takeuti model $\mathcal{T}$ in which $\omega_1$ is supercompact is essentially the Solovay's model. Suppose $\sf{ZFC}$ holds and there is a supercompact cardinal. Let $\kappa$ be a supercompact cardinal. Let $g\subset Coll(\omega,<\kappa)$ be $V$-generic for the collapse forcing. Let $\mathbb{R}^* = \mathbb{R}^{V[g]}$. Then Takeuti's model $\mathcal{T}$ is the symmetric model $V(\mathbb{R}^*)$.

Another major development started in the 1960's in set theory concerns the theory of infinite games with perfect information. The Axiom of Determinacy $(\sf{AD})$ asserts that in an infinite game where players take turns to play integers, one of the players has a winning strategy (see the next section for more detailed discussions on $\sf{AD}$ and its variations). It is well-known that $\sf{AD}$ contradicts the Axiom of Choice. Solovay has shown that $\sf{AD}$ implies $\omega_1$ is measurable and $\sf{AD}_\mathbb{R}$ implies that there is a supercompact (countably complete, normal, fine) measure on $\mathcal{P}_{\omega_1}\mathbb{R}$. Structural consequences of $\sf{AD}$ have been extensively investigated, most notably by the Cabal seminar members. Through work of Harrington, Kechris, Neeman, Woodin amongst others, we know that $\omega_1$ is $\alpha$-supercompact for every ordinal $\alpha<\Theta$ under $\sf{AD}^+$, a strengthening of $\sf{AD}$. By \cite{trang2016determinacy}, $\sf{AD}$ and $\sf{AD}_\mathbb{R}$ cannot imply $\omega_1$ is supercompact. Woodin (see below) shows that $\sf{AD}$ is consistent with ``$\omega_1$ is supercompact."

It can be shown that the theories ``$\sf{ZF} + \omega_1$ is measurable" is equiconsistent with ``$\sf{ZFC} + $ there is a measurable cardinal". The question of whether ``$\sf{ZF} + \omega_1$ is supercompact" is equiconsistent with ``$\sf{ZFC} + $ there is a supercompact cardinal" is much more subtle. Woodin, in an unpublished work in the 1990's, is able to show that the former is much weaker than the latter. Woodin's model is a variation of the Chang model. For each $\lambda$, let $\mathcal{F}_\lambda$ be the club filter on $\mathcal{P}_{\omega_1}(\lambda^\omega)$. Woodin's model is defined as
\begin{center}
$\mathcal{C}^+ = \rm{L}$$(\bigcup_{\lambda\in Ord} \lambda^\omega)[\langle\mathcal{F}_\lambda : \lambda\in Ord \rangle]$.\footnote{The Chang model is defined as $ L(\bigcup_{\lambda\in Ord} \lambda^\omega)$.}
\end{center}
Woodin shows that if there is a proper class of Woodin cardinals which are limits of Woodin cardinals, then $\mathcal{C}^+$ satisfies the $\sf{AD}$ and $\omega_1$ is supercompact. We note that in Takeuti's model, $\sf{AD}$ fails.\footnote{This gives a proof that $\sf{AD}$ does not imply ``$\omega_1$ is supercompact".} Suppose not. Letting $A\subset \kappa = \omega_1^\mathcal{T}$ be a set in $V$ that codes $V_\kappa$, then $A\subset \omega_1^{\mathcal{T}}$ is in $\textrm{L}[x]$ for some real $x\in\mathbb{R}^*$. This is a consequence of $\sf{AD}$. However, there is some $\alpha <\kappa$ such that $x\in V[g\restriction\alpha]$, where $g\restriction \alpha$ is the restriction of $g$ to $Coll(\omega,<\alpha)$. In $V[g\restriction\alpha]$, $\kappa$ is a limit of measurable cardinals and $A$ codes $V_\kappa$, so if $A\in \rm{L}$$[x]$, then $\rm{L}$$[x]\vDash$ ``there is a measurable cardinal". This is impossible. 

The theory ``$\omega_1$ is supercompact" and variations of Woodin's model $\mathcal{C}^+$ are intimately related to determinacy theory as well as modern developments in descriptive inner model theory, cf. \cite[Conjecture 1.8]{GammaUB}. The following conjecture captures some of these relationships and is an important test question for the future development of descriptive inner model theory and the core model induction.
\begin{conjecture}
The following theories are equiconsistent.
\begin{enumerate}[(i)]
\item $\sf{ZF} + \omega_1$ is supercompact.
\item $\sf{ZF} + \sf{AD} + \omega_1$ is supercompact.
\item $\sf{ZFC} + $ there is a Woodin cardinal which is a limit of Woodin cardinals.
\end{enumerate}
\end{conjecture}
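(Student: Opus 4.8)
\medskip

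\noindent\textbf{Strategy toward the conjecture.}
The implication $(ii)\Rightarrow(i)$ is immediate, so $\mathrm{Con}(ii)\to\mathrm{Con}(i)$ needs no argument; it remains to prove the two substantive arrows $\mathrm{Con}(i)\to\mathrm{Con}(iii)$ and $\mathrm{Con}(iii)\to\mathrm{Con}(ii)$, which together with the trivial one close the triangle. The first would be a lower-bound computation carried out via the core model induction; the second an upper-bound construction localizing Woodin's model $\mathcal{C}^+$. Neither is routine, but the lower bound is the genuinely open part, and is the reason the statement is posed as a conjecture rather than a theorem.

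\medskip

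\noindent\textbf{The upper bound $\mathrm{Con}(iii)\to\mathrm{Con}(ii)$.}
Work in $\sf{ZFC}$ with a Woodin cardinal $\kappa$ that is a limit of Woodin cardinals. The plan is to localize Woodin's construction below $\kappa$: pass to a Solovay-style symmetric collapse making $\kappa$ into $\omega_1$, and form there the appropriate version of $\mathcal{C}^+$, with the club filters $\mathcal{F}_\lambda$ on $\mathcal{P}_{\omega_1}(\lambda^\omega)$ ranging over the ordinals of the symmetric model, so that the Woodins cofinal in $\kappa$ can stand in for Woodin's proper class. That $\sf{AD}$ holds in this model should come from derived-model theory applied along those Woodins; that $\omega_1$ is (fully) supercompact there reduces, as in Woodin's argument, to each $\mathcal{F}_\lambda$ being a countably complete, normal, fine ultrafilter on $\mathcal{P}_{\omega_1}(\lambda^\omega)$, where only the ultrafilter property is non-trivial and is to be obtained by a reflection argument powered by the Woodins below $\kappa$. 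The delicate point is that Woodin's proof consumes a proper class of Woodin-limits-of-Woodins, whereas equiconsistency requires the construction to run off a single Woodin cardinal that is merely a limit of Woodins; arranging the localization to cost only $(iii)$, and not some strengthening of it, is the real content of this direction.

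\medskip

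\noindent\textbf{The lower bound $\mathrm{Con}(i)\to\mathrm{Con}(iii)$.}
Assume $\sf{ZF}$ together with ``$\omega_1$ is supercompact''. Since this hypothesis implies $\sf{DC}$ (one of the theorems of this paper), one may work in $\sf{ZF}+\sf{DC}+{}$``$\omega_1$ is supercompact'' and run a core model induction. The supercompactness measures on $\mathcal{P}_{\omega_1}(\lambda)$ are meant to supply, level by level, the iterability and Suslin representations that push the induction up to the maximal model of $\sf{AD}^+$; the equivalence $\sf{AD}^+\leftrightarrow\sf{AD}_\mathbb{R}$ available under this hypothesis (another theorem of this paper) then carries it past $\sf{AD}_\mathbb{R}$, past $\sf{AD}_\mathbb{R}$ with ``$\Theta$ regular'', and into the region of hod mice carrying a Woodin cardinal that is a limit of Woodin cardinals. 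Granting $\sf{HPC}$ throughout the relevant window, the hod pairs in play are captured, and from the associated $\mathrm{HOD}$-analysis inside a suitable $\mathrm{L}(\Gamma,\mathbb{R})$ one reads off a transitive model of $\sf{ZFC}$ with a Woodin cardinal which is a limit of Woodin cardinals, hence $\mathrm{Con}(iii)$.

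\medskip

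\noindent\textbf{The main obstacle.}
The crux is driving the core model induction, and with it the hod-mouse technology, past $\sf{AD}_\mathbb{R}$ with ``$\Theta$ regular'' and into the long-extender territory that codes a Woodin cardinal which is a limit of Woodin cardinals. In that region $\sf{HPC}$ is itself only conjectural, and the fine-structural comparison theory for hod premice with long extenders has not yet been developed; until it is, the lower bound $\mathrm{Con}(i)\to\mathrm{Con}(iii)$ --- and hence the full equiconsistency --- will stay out of reach. A secondary, more bookkeeping-level difficulty is the accounting in the upper bound needed to consume exactly one Woodin-limit-of-Woodins and not a proper class of them.
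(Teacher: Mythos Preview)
The paper does not prove this statement: it is explicitly labeled a \emph{Conjecture} and left open, so there is no ``paper's own proof'' to compare against. You recognize this yourself, calling the lower bound ``the genuinely open part'' and framing your write-up as a \emph{strategy toward the conjecture} rather than a proof; on that reading, your outline is broadly in line with what the paper's introduction suggests --- Woodin's $\mathcal{C}^+$ for the upper bound and the core model induction for the lower bound --- and you correctly flag the two real obstructions (localizing Woodin's construction from a proper class of Woodin-limits-of-Woodins down to a single one, and pushing the hod-mouse comparison theory into the long-extender region).

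That said, as a \emph{proof} of the conjecture your proposal has a genuine gap in each nontrivial direction, and you should not present it as more than a roadmap. For $\mathrm{Con}(iii)\to\mathrm{Con}(ii)$, Woodin's published argument uses a proper class of Woodin cardinals that are limits of Woodin cardinals, and no localization to a single such cardinal is known; your sentence ``arranging the localization to cost only $(iii)$\ldots is the real content of this direction'' is exactly right, but you supply none of that content. For $\mathrm{Con}(i)\to\mathrm{Con}(iii)$, the core model induction you describe has not been carried out at this level: the required hod-pair theory (in particular $\sf{HPC}$ and comparison for least-branch hod premice with long extenders) is not established in the relevant region, as you note in your ``main obstacle'' paragraph. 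So your proposal is a fair summary of why the conjecture is plausible and what would have to be done, but it is not a proof, and the paper does not claim one either.
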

\cite{trang2016determinacy}, \cite{trang2011determinacy}, \cite{trang2015structure} made some progress in resolving the conjecture by exploring consistency strength and structural consequences of various fragments of supercompactness of $\omega_1$.

This paper studies structural consequences of (full) supercompactness of $\omega_1$ under $\sf{ZF}$. We first show the following basic structural  consequences.

\begin{theorem}\label{thm:choice}
Assume that $\omega_1$ is supercompact. Then
\begin{enumerate}
\item the Axiom of Dependent Choices (\textsf{DC}) holds, while

\item there is no injection from $\omega_1$ to $2^{\omega}$.
\end{enumerate}
\end{theorem}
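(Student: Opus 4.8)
The plan is to treat the two assertions separately, disposing of (2) quickly from the fact that supercompactness of $\omega_1$ entails its measurability, and devoting the main effort to deriving $\mathsf{DC}$ in (1) from a normal fine measure. Throughout I write $\mathcal{P}_{\omega_1}(Z)$ for the collection of countable subsets of $Z$; since $\omega_1$ is regular (being measurable), each such $\sigma$ injects into $\omega$ and is therefore well-orderable, a point I will use to keep every construction below choice-free. The supercompactness hypothesis supplies, for every set $Z$, a normal fine measure $\mu_Z$ on $\mathcal{P}_{\omega_1}(Z)$, and it is exactly these two features, fineness and normality, that do all the work.

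For (1), I would first reduce $\mathsf{DC}$ to its tree form: given a nonempty $S$ and a relation $R$ on $S$ with $\forall x\,\exists y\,(x\,R\,y)$, fix some $x_\ast\in S$ and let $T$ be the tree of finite $R$-chains beginning with $x_\ast$, so that every node of $T$ has an immediate successor and it suffices to produce an infinite branch. I would then take a normal fine measure $\mu$ on $\mathcal{P}_{\omega_1}(T)$. For each node $s\in T$ the set $B_s:=\{\tau\in\mathcal{P}_{\omega_1}(T):\tau \text{ contains an immediate successor of } s\}$ is definable from $s$ with no choice, and choosing one immediate successor $s'$ and applying fineness to $\{\tau:s'\in\tau\}\subseteq B_s$ shows $B_s\in\mu$. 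Now the crucial move: by normality the diagonal intersection $\triangle_{s\in T}B_s=\{\tau:\forall s\in\tau\ (\tau\in B_s)\}$ again lies in $\mu$, so intersecting with the measure-one set $\{\tau:\langle x_\ast\rangle\in\tau\}$ and picking any $\tau$ in the result yields a countable set of nodes that contains the root and is closed, within itself, under passing to immediate successors. Finally I would recurse along $\tau$: fixing a well-ordering $<_\tau$ of the countable set $\tau$, set $s_0=\langle x_\ast\rangle$ and let $s_{n+1}$ be the $<_\tau$-least immediate successor of $s_n$ lying in $\tau$; then $\bigcup_n s_n$ is the desired infinite $R$-chain.

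For (2), I would use that supercompactness of $\omega_1$ yields a nonprincipal countably complete ultrafilter $U$ on $\omega_1$ (project a normal fine measure on $\mathcal{P}_{\omega_1}(\omega_1)$ down to $\omega_1$), every member of which is uncountable. Given a putative injection $f\colon\omega_1\to 2^\omega$, for each $n$ let $i_n\in\{0,1\}$ be the unique value with $\{\alpha:f(\alpha)(n)=i_n\}\in U$; this defines $n\mapsto i_n$ outright, using no choice. By countable completeness $D:=\bigcap_n\{\alpha:f(\alpha)(n)=i_n\}\in U$, so $D$ is uncountable, yet every $\alpha\in D$ satisfies $f(\alpha)=\langle i_n:n<\omega\rangle$, contradicting injectivity.

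The main obstacle is the heart of (1): forcing a single countable $\tau$ to be closed under $R$-successors simultaneously for all of its possibly uncountably many nodes. Countable completeness alone is hopeless, since $T$ may be uncountable and there is one constraint $B_s$ per node; it is precisely normality, in the form of closure under diagonal intersections indexed by the underlying set $T$, that converts the pointwise measure-one statements into one measure-one statement about all nodes of $\tau$ at once. A secondary point to watch is choicelessness: the recursion producing the branch must be genuinely definable from $\tau$, which is why I reduce to countable $\tau$ and exploit their well-orderability rather than choosing successors freely.
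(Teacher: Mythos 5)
Your proof is correct and takes essentially the same route as the paper: in both parts the work is done by fineness plus normality of a measure on $\mathcal{P}_{\omega_1}$ (yielding a countable set closed under taking $R$-successors, respectively a nonprincipal $\sigma$-complete ultrafilter on $\omega_1$ obtained by projection), followed by a choice-free recursion along a well-ordering of a countable set. The only cosmetic differences are in part (1): you run the argument on the tree of finite $R$-chains and invoke normality in its diagonal-intersection form, whereas the paper applies a normal fine measure to $\mathcal{P}_{\omega_1}(A)$ directly and proves the same closure statement by contradiction from its $\emptyset \neq f(\sigma) \subseteq \sigma$ formulation of normality --- the two formulations being equivalent in \textsf{ZF}, as the paper itself notes.
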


The useful fact that $\sf{DC}$ holds can be used to derive other determinacy-like consequences such as:

\begin{theorem}\label{thm:weak_homo}
Assume $\omega_1$ is supercompact. Then every tree is weakly homogeneous.
\end{theorem}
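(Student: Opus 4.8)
The plan is to produce, for a given tree $T$ on $\omega \times \lambda$, a system of countably complete measures witnessing weak homogeneity, built directly from a supercompactness measure by reflecting branches down to countable sets of ordinals. First I would fix the tree $T$ on $\omega \times \lambda$ (every tree whose projection is a set of reals may be taken of this form after coding its labels by ordinals), and use the supercompactness of $\omega_1$ to fix a normal, fine, countably complete measure $\mu$ on $\mathcal{P}_{\omega_1}(\lambda)$. For $\sigma \in \mathcal{P}_{\omega_1}(\lambda)$ write $T\restriction\sigma = T \cap (\omega \times \sigma)^{<\omega}$, a tree on $\omega \times \sigma$ with $\sigma$ countable. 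The first key step is the reflection $x \in p[T] \iff \{\sigma : x \in p[T\restriction\sigma]\} \in \mu$: if $f \in \lambda^\omega$ witnesses $x \in p[T]$, then by fineness each $\{\sigma : f(k) \in \sigma\}$ is in $\mu$, and by countable completeness so is their intersection $\{\sigma : \mathrm{ran}(f) \subseteq \sigma\}$, on which $f$ is already a branch of $T\restriction\sigma$; the converse direction is immediate since $T\restriction\sigma \subseteq T$.

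The measures are then obtained by pushing $\mu$ forward along canonical branches of the countable trees $T\restriction\sigma$. Since $\sigma \subseteq \lambda$ is a set of ordinals it is canonically wellordered, so for each $x \in p[T]$ and each $\sigma$ in the measure-one set $S_x := \{\sigma : x \in p[T\restriction\sigma]\}$ the tree $T\restriction\sigma$ has a leftmost branch $f_{\sigma,x} \in \sigma^\omega$ with first coordinate $x$; here I would invoke \textsf{DC} (available by the previous theorem) to pass from ``$T\restriction\sigma$ is ill-founded below $x$'' to the existence of an actual, hence leftmost, branch. For $n < \omega$ define $\mu^x_n$ on $\lambda^n$ to be the pushforward of $\mu\restriction S_x$ under the map $\sigma \mapsto f_{\sigma,x}\restriction n$. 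Each $\mu^x_n$ is then a countably complete ultrafilter concentrating on $T_{x\restriction n} = \{u : (x\restriction n, u) \in T\}$, the measures cohere as $n$ increases, and the tower $\langle \mu^x_n : n < \omega\rangle$ is countably complete: given $A_n \in \mu^x_n$ the sets $\{\sigma \in S_x : f_{\sigma,x}\restriction n \in A_n\}$ are in $\mu$, so by countable completeness their intersection is nonempty, and any $\sigma$ in it yields the thread $f_{\sigma,x}$. Taking $\Sigma = \{\mu^x_n : x \in p[T],\ n < \omega\}$ then witnesses that $T$ is weakly homogeneous.

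The main obstacle, and the place where full supercompactness rather than mere measurability of $\omega_1$ is essential, is the reflection step: it requires a fine measure on $\mathcal{P}_{\omega_1}(\lambda)$ for the specific, possibly large, $\lambda$ carrying $T$, precisely so that the countable range of an arbitrary branch $f \in \lambda^\omega$ can be captured inside measure-one-many countable $\sigma$. A secondary point requiring care is the canonical choice of branch: without \textsf{DC} one cannot in general pass from ``$T\restriction\sigma$ has no rank function below $x$'' to ``$T\restriction\sigma$ has an infinite branch,'' so the well-definedness of $f_{\sigma,x}$, and hence of the measures, genuinely uses the conclusion of the preceding theorem. Finally, one should check that the construction meets whatever quantitative form of weak homogeneity is in force; note that the towers produced are indexed by the full real $x$ rather than by $x\restriction n$, which is exactly the weakening from homogeneity to weak homogeneity, and that $\Sigma$ is a set of countably complete measures on $\lambda^{<\omega}$, as required.
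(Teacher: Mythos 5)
There is a genuine gap, and it is exactly at the point your last paragraph tries to wave away. Your reflection argument and the pushforward construction of the towers $\langle \mu^x_n : n < \omega\rangle$ are fine (this is the standard way to produce \emph{some} countably complete tower concentrating on the sections $T_{x\restriction n}$ for each $x \in p[T]$). But weak homogeneity, as in Definition~\ref{def:weak_homo}, requires that each node $s \in \omega^{<\omega}$ be assigned a \emph{countable} set $M_s$ of measures such that every $x \in p[T]$ admits a countably complete tower threading through the sets $M_{x\restriction n}$. Your measures $\mu^x_n$ are pushforwards along the leftmost branch of $T\restriction\sigma$ \emph{through the full real $x$}, so the natural candidate $M_s = \{\mu^x_n : x \in p[T],\ x\restriction n = s\}$ has one (potentially distinct) measure for each real extending $s$, i.e., up to continuum many, with no reason whatsoever to be countable. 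Your closing claim that indexing by the full real ``is exactly the weakening from homogeneity to weak homogeneity'' misreads the definition: the weakening is that the tower for $x$ may be chosen freely from the sets $M_{x\restriction n}$ rather than being a single coherent assignment $s \mapsto \mu_s$; the countability of each $M_s$ is retained, and it is what makes weak homogeneity useful at all (e.g., for the Martin--Solovay construction). A ``set of measures'' with no countability restriction makes the statement essentially vacuous, since your construction would then prove it from $\mathbb{R}$-supercompactness alone.

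Closing this gap is the actual content of the theorem, and it is where the paper goes a different route: it invokes the criterion of \cite{martin2008weakly}, which yields weak homogeneity from (A) a normal fine measure on $\mathcal{P}_{\omega_1}\bigl(\bigcup_n (\mathcal{P}({}^n\lambda)\cup \mathrm{MEAS}^{\omega_1,\lambda}_n)\bigr)$, (B) $\mathsf{DC}$ for relations on $\mathcal{P}(\lambda)$, and (C) a \emph{wellordering} of the set $\bigcup_n \mathrm{MEAS}^{\omega_1,\lambda}_n$ of all $\omega_1$-complete measures; it is Martin's argument, using (C) together with the supercompactness measure over a set that includes the measures themselves, that cuts the measure systems down to countable sets $M_s$. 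The paper's own work is then concentrated in proving (C) in $\mathsf{ZF}$: taking a normal fine measure $U$ on $\mathcal{P}_{\omega_1}(Z)$ with $Z = \mathcal{P}(\lambda) \cup \mathrm{MEAS}^{\omega_1,\lambda}_1$, mapping each measure $\mu$ to $f_\mu(\sigma) = \min \bigcap(\sigma\cap\mu)$, and showing $\mu \mapsto [f_\mu]_U$ injects the measures into a wellordered ultrapower of the ordinals. If you want to salvage your approach, you would need to supplement your tower construction with an argument of this kind; note that your construction only takes a supercompactness measure on $\mathcal{P}_{\omega_1}(\lambda)$, whereas the countability step genuinely needs supercompactness relative to a set as large as the set of measures on $\lambda^{<\omega}$, which is one reason full supercompactness of $\omega_1$ (not just $\lambda$-supercompactness for the $\lambda$ carrying $T$) enters the paper's proof.
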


\begin{theorem}\label{thm:det}
Assume $\omega_1$ is supercompact and Hod Pair Capturing ($\textsf{HPC}$). Then for any $A$ such that $A$ is Suslin co-Suslin, $A$ is determined. 
\end{theorem}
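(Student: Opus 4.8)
\noindent\emph{Proof strategy.} The plan is to deduce determinacy of $A$ from Martin's theorem that homogeneously Suslin sets are determined, after upgrading the given Suslin representation of the complement of $A$ to a homogeneous representation of $A$ itself. Since $\omega_1$ is supercompact, Theorem~\ref{thm:choice} gives \textsf{DC}, so the tree, ultrapower and direct-limit arguments below are all legitimate; moreover $\omega_1$ is measurable, so the measures we work with are countably complete. Fix trees $T,S$ witnessing that $A$ is Suslin co-Suslin, say $A=p[T]$ and $\neg A=p[S]$, where $\neg A$ denotes the complement of $A$ in Baire space. By Theorem~\ref{thm:weak_homo} every tree is weakly homogeneous, so I may fix weak homogeneity systems of countably complete measures for both $T$ and $S$.

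First I would run the Martin--Solovay construction on the weak homogeneity system for $S$ to form the tree $\mathrm{ms}(S)$. Using countable completeness of the measures, together with \textsf{DC} to see that the relevant towers of measures are well-founded, one obtains $p[\mathrm{ms}(S)]=\neg p[S]=A$; symmetrically $p[\mathrm{ms}(T)]=\neg A$. At this stage $A$ is exhibited as the projection of $\mathrm{ms}(S)$, which already yields the weaker statement that a Suslin co-Suslin set is the projection of a (determined, indeed homogeneously Suslin) set. The whole content of the theorem is therefore to promote weak homogeneity to genuine homogeneity: if $\mathrm{ms}(S)$ is homogeneous then $A=p[\mathrm{ms}(S)]$ is determined by Martin's theorem, and we are done.

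The promotion step is where \textsf{HPC} is essential, and it is the main obstacle. In the \textsf{ZFC} large-cardinal setting a background Woodin cardinal makes $\mathrm{ms}(S)$ automatically $<\delta$-homogeneous, but here the only completeness on hand comes from $\omega_1$, which secures $p[\mathrm{ms}(S)]=A$ but not homogeneity of $\mathrm{ms}(S)$. I would use \textsf{HPC} to supply the missing inner-model-theoretic certificate: because $A$ is Suslin co-Suslin, \textsf{HPC} yields a hod pair $(\mathcal{P},\Sigma)$ capturing $A$, so that $A$ lies in the associated derived model, a model of $\textsf{AD}^+$, and the canonical, fullness-preserving strategy $\Sigma$ induces a homogeneity system (its branch measures certify the well-foundedness that weak homogeneity alone leaves open) on a tree projecting to $A$. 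Granting this, $A$ is homogeneously Suslin, hence determined by Martin's theorem. The two points I expect to fight with are (i) checking that \textsf{HPC} genuinely applies to the pointclass of Suslin co-Suslin sets and produces a strategy whose measures live in $V$ and are countably complete there, and (ii) arguing, using only $\textsf{ZF}+\textsf{DC}$ and the fact that $A$ is both Suslin and co-Suslin (so that membership in $A$ is absolutely certified by $T$ and $S$), that the resulting winning strategy is winning in $V$.
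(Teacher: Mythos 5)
Your argument is fine up to the point you yourself flag as the main obstacle, but that obstacle is the entire theorem, and your proposed resolution of it is an assertion, not a proof. The preliminary steps buy nothing new: weak homogeneity of $T$ (Theorem~\ref{thm:weak_homo}) already exhibits $A=p[T]$ as the projection of a homogeneously Suslin set (that is the standard characterization of weak homogeneity, and it is exactly the weaker statement the paper records), while the Martin--Solovay detour through $\mathrm{ms}(S)$ only re-proves that $A$ is Suslin, which was given. The gap is the promotion step: you claim that $\mathsf{HPC}$ yields a hod pair whose strategy ``induces a homogeneity system \ldots on a tree projecting to $A$,'' but no mechanism is offered, and none is known. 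The difficulty is not cosmetic. An lbr hod pair $(\mathcal{P},\Sigma)$ produced by $\mathsf{HPC}$ need not contain Woodin cardinals, so there is no Martin--Steel argument to run inside it; and even if you pass to a derived model of some $\Sigma$-mouse in which $\mathsf{AD}^+$ holds and $A$ is homogeneously Suslin there, the homogeneity measures live in that inner model, and there is no general way to convert them into measures that are countably complete in $V$, which is what Definition~\ref{def:weak_homo} and Martin's theorem require. The paper itself treats ``every Suslin co-Suslin set is homogeneously Suslin'' as an open hope --- a possible route to Conjecture~\ref{conj:ScSdet}, i.e.\ to removing $\mathsf{HPC}$ altogether --- not as a consequence of $\mathsf{HPC}$. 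So your proposal reduces the theorem to a statement at least as hard as the theorem, and quite possibly open.

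For comparison, the paper's proof sidesteps homogeneity entirely, and that is precisely why it closes. $\mathsf{HPC}$ gives an lbr hod pair $(\mathcal{P},\Sigma)$ and a real $x$ with $A=\sigma_x^{-1}[Code(\Sigma)]$. Theorem~\ref{thm:lbrScS} (this is where $\mathbb{R}$-supercompactness and $\neg\square_{\omega_1}$ enter) gives that $Code(\Sigma)$ is Suslin co-Suslin and that $\mathcal{M}_2^{\Sigma,\sharp}$ exists, a $\Sigma$-mouse with Woodin cardinals $\delta_0<\delta_1$. A genericity iteration below $\delta_0$ makes $x$ generic, $\mathcal{N}[x]$ is construed as a $\Sigma$-mouse over $x$, and Lemma~\ref{lem:term} supplies a term capturing $Code(\Sigma)$ at the image of $\delta_1$, hence (pulling back along $\sigma_x$) a term capturing $A$. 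Neeman's Theorem~\ref{thm:termCap} then gives determinacy of $A$ from iterability plus term capturing alone --- no towers, measures, or homogeneity systems in $V$ are ever produced. If you wish to salvage your route, the missing lemma you would have to prove is exactly that the hod pair's strategy yields homogeneity measures countably complete in $V$; that is a genuinely open problem, not a verification to ``fight with.''
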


See Section \ref{sec:HPC} for more detailed discussions on the hypothesis $\sf{HPC}$. Under ``$\omega_1$ is supercompact", we also show that $\sf{AD}^+$ and $\sf{AD}_\mathbb{R}$ are equivalent.

\begin{theorem}\label{thm:equiv}
Assume $\omega_1$ is supercompact. Then the following theories are equivalent:
\begin{enumerate}
\item $\textsf{AD}^+$.
\item $\textsf{AD}_\mathbb{R}$.
\end{enumerate}
\end{theorem}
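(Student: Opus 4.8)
\noindent The plan is to prove the two implications separately, both under the standing hypothesis that $\omega_1$ is supercompact, so that in particular $\textsf{DC}$ is available by Theorem~\ref{thm:choice}(1). For the direction $\textsf{AD}_\mathbb{R}\Rightarrow\textsf{AD}^+$ I would simply invoke the classical theorem of Woodin that $\textsf{ZF}+\textsf{DC}+\textsf{AD}_\mathbb{R}$ proves $\textsf{AD}^+$: granting $\textsf{DC}$, determinacy of real games yields uniformization and scales on every set of reals, so every set of reals is Suslin and a fortiori $\infty$-Borel, and the ordinal-determinacy clause of $\textsf{AD}^+$ likewise follows from $\textsf{AD}_\mathbb{R}$. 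Since $\textsf{DC}$ holds by Theorem~\ref{thm:choice}(1), this direction requires nothing further from supercompactness.

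The substance is the converse, $\textsf{AD}^+\Rightarrow\textsf{AD}_\mathbb{R}$, and here I would reduce everything to a single claim: every set of reals is Suslin. Granting the claim, note that complements of sets are again sets, so every set of reals is both Suslin and co-Suslin; and a real game whose payoff set is Suslin and co-Suslin is determined in the presence of $\textsf{DC}$, by unraveling the two Suslin representations into an auxiliary game with clopen payoff and pushing the resulting (Gale--Stewart) winning strategy back down. Hence every real game is determined, i.e.\ $\textsf{AD}_\mathbb{R}$ holds. This is precisely the content of Woodin's theorem that, under $\textsf{AD}^+$, $\textsf{AD}_\mathbb{R}$ is equivalent to the assertion that every set of reals is Suslin (equivalently, that the Suslin cardinals are cofinal in $\Theta$), which I would cite for the reduction.

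It remains to establish the claim, and this is where full supercompactness of $\omega_1$ enters. Fix $A\subseteq\mathbb{R}$. By $\textsf{AD}^+$, $A$ is $\infty$-Borel; fix an $\infty$-Borel code $S\subseteq\gamma$ for $A$, so that membership in $A$ is decided by a fixed formula interpreted in $L[S,x]$. Since $\omega_1$ is supercompact, there is a normal fine countably complete measure $\mu$ on $\mathcal{P}_{\omega_1}(\gamma)$. I would then build a Martin--Solovay-style tree $T_A$ on $\omega\times\gamma$ whose nodes record finite approximations to a real $x$ together with ordinals coding, for $\mu$-almost every countable $\sigma\in\mathcal{P}_{\omega_1}(\gamma)$, a witness that the $\infty$-Borel computation of ``$x\in A$'' from $S\cap\sigma$ succeeds, with $\mu$ serving as the homogeneity system. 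Using countable completeness and fineness of $\mu$ one shows $p[T_A]=A$, so that $A$ is (in fact homogeneously) Suslin. This is exactly the step that needs \emph{full} supercompactness --- a fine measure on $\mathcal{P}_{\omega_1}(\gamma)$ for every ordinal $\gamma$, not merely for $\gamma=|\mathbb{R}|$ --- since the length $\gamma$ of an $\infty$-Borel code is in general far above the continuum; it draws on the same supply of measures underlying the weak homogeneity of all trees in Theorem~\ref{thm:weak_homo}.

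The main obstacle I anticipate is the verification that $p[T_A]=A$. Soundness (a branch through $T_A$ forces $x\in A$) should follow from {\L}o\'s's theorem and normality applied to the ultrapower by $\mu$, while completeness (every $x\in A$ yields a branch) requires that the $\infty$-Borel computation reflect correctly to $\mu$-almost every countable substructure and that these local witnesses cohere into a single branch. This is exactly the point at which the countable completeness of $\mu$ and the absoluteness of the $\infty$-Borel code must be combined with care, and assembling it correctly --- while checking that the ordinal bookkeeping keeps $T_A$ a genuine tree on $\omega\times\gamma$ --- is the technical heart of the argument.
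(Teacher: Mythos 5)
Your first two paragraphs are fine and agree with the paper's strategy: the direction $\textsf{AD}_\mathbb{R}\Rightarrow\textsf{AD}^+$ and the reduction of the converse to ``every set of reals is Suslin'' are both exactly Theorem~\ref{thm:ADRSuslin} (with $\textsf{DC}$ supplied by Theorem~\ref{thm:choice}). The gap is in your third paragraph, and it is fatal: the implication you propose to prove there --- from an $\infty$-Borel code $S\subseteq\gamma$ for $A$ together with a fine normal measure on $\mathcal{P}_{\omega_1}(\gamma)$, conclude that $A$ is (homogeneously) Suslin --- is not merely unproved but false under $\textsf{ZF}+\textsf{DC}+\textsf{AD}^+$. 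Work in $\mathrm{L}(\mathbb{R})$ (assuming $\textsf{AD}$ holds there): $\textsf{AD}^+$ and $\textsf{DC}$ hold, and, as quoted in the paper's introduction, $\omega_1$ is $\gamma$-supercompact for every ordinal $\gamma<\Theta$. Let $T$ be the tree of a $\Sigma^2_1$-scale on a universal $\Sigma^2_1$ set, a tree on $\omega\times\delta$ for some $\delta<\Theta$, and let $A$ be the complementary universal $\Pi^2_1$ set. By absoluteness of well-foundedness, $x\in A$ if and only if $\mathrm{L}[T,x]\vDash$ ``$T_x$ is well-founded'', so $A$ has an $\infty$-Borel code of length $\delta<\Theta$; thus all your hypotheses hold for this $A$ with $\gamma=\delta$. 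Yet $A$ is not Suslin in $\mathrm{L}(\mathbb{R})$, since the Suslin sets there are exactly the $\Sigma^2_1$ sets. Hence no tree construction of the kind you sketch can exist. (Locally, the trouble is the one you yourself flag: membership in $A$ is computed in $L[S,x]$, while $\mu$-almost-everywhere behavior of the structures $L[S\cap\sigma,x]$ reflects, via the ultraproduct, a collapsed code $\bar S$ that differs from $S$, and there is no {\L}o\'s-style transfer between the two; the counterexample shows this cannot be repaired.)

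The conceptual point your proposal misses is where full supercompactness genuinely enters. Measures on $\mathcal{P}_{\omega_1}(\gamma)$ for ordinals $\gamma$ --- the only ones your construction invokes --- are already available below $\Theta$ under $\textsf{AD}^+$ alone, so they cannot be the engine of the proof; what $\textsf{AD}^+$ does not provide are fine measures on $\mathcal{P}_{\omega_1}(X)$ for non-well-orderable $X$, such as a pointclass or a set of measures. That is exactly what the paper uses, arguing by contradiction rather than by direct construction: if $\textsf{AD}_\mathbb{R}$ fails, then by Theorems~\ref{thm:ADRSuslin} and \ref{thm:ADSuslin} there is a largest Suslin cardinal $\kappa$, the pointclass $\Gamma=S(\kappa)$ is inductive-like with the scale property, and the universal $\check{\Gamma}$ set admits no scale; Wilson's Lemma~\ref{lemma:scale-from-strong-compactness}, applied using $Env(\Gamma)$-strong compactness of $\omega_1$ (a fine measure on $\mathcal{P}_{\omega_1}$ of a set of \emph{sets of reals}), then produces a scale on that very set --- contradiction. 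Alternatively, as in the paper's footnote, the universal $\Gamma$ set is Suslin, hence weakly homogeneously Suslin by Theorem~\ref{thm:weak_homo} (whose proof again needs measures over sets of measures), so its complement is Suslin by Martin--Solovay, contradicting the maximality of $\Gamma$. In both versions Suslin representations are only ever obtained for sets already known to be Suslin; they are never manufactured from $\infty$-Borel codes, which is precisely the step your argument requires and which provably does not exist.
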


``$\omega_1$ is supercompact" also implies a large collection of sets of reals are determined (\cite{trang2016determinacy}) and perhaps an even larger collection of sets of reals admit $\infty$-Borel representations.

\begin{theorem}\label{thm:infty-Borel}
Assume that $\omega_1$ is supercompact. Then every subset of $2^{\omega}$ in the Chang model $\mathrm{L}(\bigcup_{\lambda \in \text{Ord}} \lambda^{\omega})$ is $\infty$-Borel.
\end{theorem}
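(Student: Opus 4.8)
Recall that $A \subseteq 2^\omega$ is $\infty$-Borel just in case there are a set of ordinals $S$ and a first-order formula $\psi$ with $x \in A \iff \mathrm{L}[S,x] \models \psi(S,x)$; the plan is to manufacture such a code $S$ from a single supercompactness measure. Write $\mathcal{C} = \mathrm{L}(\bigcup_{\lambda\in\mathrm{Ord}}\lambda^\omega)$ for the Chang model. First I would reduce to a definable set: every $A \subseteq 2^\omega$ with $A \in \mathcal{C}$ is, in $\mathcal{C}$, definable from a real and a countable sequence of ordinals, so fix an ordinal $\lambda$, a parameter $t \in \lambda^\omega$, and a formula $\varphi$ with $A = \{x \in 2^\omega : \mathcal{C} \models \varphi[x,t]\}$. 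Since $\sf{DC}$ holds (Theorem~\ref{thm:choice}), the usual L\'evy reflection applies to the definable class $\mathcal{C}$: as every real already appears at the bottom of the $\mathrm{L}(\cdot)$-hierarchy, I may fix a single $\eta > \lambda$ so that, uniformly in the real parameter $x$, truth of $\varphi[x,t]$ in $\mathcal{C}$ is decided by the set-sized level $\mathcal{C}_\eta = \mathrm{L}_\eta(\bigcup_{\beta<\eta}\beta^\omega)$.

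Next I would bring in the measure. Since $\omega_1$ is supercompact, fix a normal fine measure $\mu$ on $\mathcal{P}_{\omega_1}(\theta)$ for some $\theta \geq \eta$. The representation I aim for is the averaged local one: for every real $x$,
\[
x \in A \iff \{\, a \in \mathcal{P}_{\omega_1}(\theta) : M_{a,x} \models \varphi[x, t_a] \,\} \in \mu,
\]
where $M_{a,x}$ is the transitive collapse of the Skolem hull of $a \cup \{x\}$ inside $\mathcal{C}_\eta$ and $t_a$ is the copy of $t$ in $M_{a,x}$ (both defined on the $\mu$-measure-one set of $a$ with $\mathrm{ran}(t) \subseteq a$, which is $\mu$-large by fineness). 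Correctness of this equivalence is read off from {\L}o\'s's theorem applied to $\prod_a M_{a,x}/\mu$ together with the fact that these hulls elementarily capture $\mathcal{C}_\eta$; here $\sigma$-completeness of $\mu$ (again using $\sf{DC}$) gives well-foundedness of the ultraproduct, while the critical point of the ultrapower embedding being $\omega_1$, together with fineness, is what makes the real $x$ pass through the collapses unchanged.

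For the code itself I would let $S \subseteq \mathrm{Ord}$ encode the parameter $t$, the Skolem-function data for $\mathcal{C}_\eta$, and (the relevant part of) the measure $\mu$, all as a single set of ordinals. Given any real $x$, the structures $M_{a,x}$ and the local truths $M_{a,x} \models \varphi[x,t_a]$ are computable inside $\mathrm{L}[S,x]$ from $S$ and $x$ alone; consulting the copy of $\mu$ coded into $S$ then decides whether the set of ``good'' $a$ is $\mu$-large. Taking $\psi$ to formalize ``the set of $a$ for which the $S$-coded hull at $a$, with $x$ inserted, satisfies $\varphi$ is measure one for the $S$-coded measure,'' I obtain $x \in A \iff \mathrm{L}[S,x] \models \psi(S,x)$, which is exactly an $\infty$-Borel code for $A$.

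The main obstacle is the locality built into the last two steps. Because $A$ is a set of all reals whereas $\mathrm{L}[S]$ need not contain every real, $S$ must be a genuinely uniform recipe that decides membership for an arbitrary real $x$ --- possibly one that is $\mathrm{L}[S]$-generic --- rather than a static list of answers; in particular I cannot simply code the structure $\mathcal{C}_\eta$ (which already contains all reals) into $S$. The work is therefore to show that coding only $\mu$ (restricted to the set-sized Boolean algebra of $S$- and $x$-definable subsets of $\mathcal{P}_{\omega_1}(\theta)$) suffices for $\mathrm{L}[S,x]$ to compute the correct measure-one verdict absolutely, and that this verdict provably equals membership in $A$. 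Fineness (every real is captured on a $\mu$-measure-one set of hulls) and $\sigma$-completeness together with normality of the supercompactness measure, available precisely because $\omega_1$ is supercompact, are exactly what drive this verification.
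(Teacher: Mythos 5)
Your overall skeleton (localize with a supercompactness measure, apply {\L}o\'s's theorem, output a set of ordinals as the code) is the right family of ideas, and it matches the paper's strategy in outline; but the two concrete devices you propose for producing the code both fail, and they fail exactly at the point you yourself flag as ``the main obstacle.'' (a) Computing the hulls: $M_{a,x}$, the collapsed hull of $a \cup \{x\}$ in $\mathcal{C}_\eta$, contains every real that is definable in $\mathcal{C}_\eta$ from $x$ and ordinals in $a$. For a \emph{fixed} $x$ this collection of reals is well-orderable and hence codeable by a set of ordinals, but that code depends on $x$; an $\infty$-Borel code $S$ must be a single set of ordinals chosen before $x$ is given, and it must work for reals $x \notin \mathrm{L}[S]$ --- note that no set of ordinals $S$ can have $2^\omega \subseteq \mathrm{L}[S]$, since then $2^\omega$ would be well-orderable and $\omega_1$ would inject into it, contradicting part 2 of Theorem~\ref{thm:choice}. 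So ``Skolem-function data for $\mathcal{C}_\eta$'' is not an object of the required kind, and $\mathrm{L}[S,x]$ cannot reconstruct the structures $M_{a,x}$. (A secondary issue: elementary Skolem hulls of $\mathcal{C}_\eta$ require definably choosing real witnesses, which is unavailable in the choiceless Chang model.) (b) Coding $\mu$: the set $\mathcal{P}_{\omega_1}(\theta)$ is not well-orderable here ($2^\omega$ injects into it via $x \mapsto \{2n + x(n) : n \in \omega\}$), so neither $\mu$ nor any restriction of $\mu$ rich enough to measure the sets $\{a : M_{a,x} \models \varphi[x,t_a]\}$ for \emph{all} reals $x$ can be coded by a set of ordinals; restricting to $S$- and $x$-definable sets reintroduces the dependence on $x$. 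So the displayed equivalence cannot be evaluated inside $\mathrm{L}[S,x]$, and the central step of your proposal is unsupported --- indeed it is the entire content of the theorem.

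What the paper does instead supplies precisely the missing device. First, the Vop\v{e}nka algebra: for each countable $\sigma \subseteq \lambda^\omega$, every real in $\sigma$ is generic over $\mathrm{HOD}^{\mathrm{L}(S,\sigma)}_{S,\sigma}$ for its Vop\v{e}nka algebra $Q_\sigma$, so (using fineness of $\mu$) the quantifier ``there exists $y \in \lambda^\omega$ with $(x,y) \in B$'' becomes ``some condition in $\mathrm{Coll}(\omega,|\mathcal{P}(Q_\sigma)|)$ forces $(\check{x},y) \in B$'' --- a statement about sets of ordinals, evaluable inside $\mathrm{L}[S,Q_\sigma,x]$. Quantification over reals is traded for quantification over forcing conditions; no real ever needs to live inside the code. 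Second, the measure is used \emph{once, in $V$}, rather than being consulted inside $\mathrm{L}[S,x]$: one forms the ultraproduct $\prod_\sigma \mathrm{L}[S,Q_\sigma,x]/\mu$, which is well-founded by $\mathsf{DC}$ and satisfies {\L}o\'s by fineness, and thereby amalgamates the $\sigma$-indexed local codes into a single pair of sets of ordinals $(S_\infty, Q_\infty)$ independent of $x$; membership of $x$ is then a forcing statement inside $\mathrm{L}[S_\infty,Q_\infty,x]$, with no trace of $\mu$ left in the code (Lemma~\ref{lem:key lemma}). Third, since elementary hulls and one-shot reflection are unavailable, the paper works by induction on the levels $\mathrm{L}_\alpha(\lambda^\omega)$, using canonical partial surjections $\pi_\alpha \colon \alpha^{<\omega} \times \lambda^\omega \to \mathrm{L}_\alpha(\lambda^\omega)$ and stripping one real quantifier at a time via the projection function $F$ (Lemma~\ref{lem:induction}). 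Without the genericity trick or some substitute for it, your single set $S$ cannot decide membership for arbitrary reals, so the proposal has a genuine gap at its core.
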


The paper is organized as follows. Section \ref{sec:basic} summarizes basic concepts and definitions used in this paper. In Section \ref{sec:DC}, we prove Theorem \ref{thm:choice}. The proof of Theorem \ref{thm:infty-Borel} is given in Section \ref{sec:Chang model}. In Section \ref{sec:weakhomo}, we prove Theorem \ref{thm:weak_homo}. Section \ref{sec:equiv} introduces the notion of the envelope of a pointclass and proves Theorem \ref{thm:equiv}. Finally, Section \ref{sec:HPC} explains $\sf{HPC}$ and proves Theorem \ref{thm:det}.
\\
\\
\noindent \textbf{Acknowledgement.} We would like to thank Hugh Woodin for communicating his insights on this subject as well as his results concerning the model $\mathcal{C}^+$. The first author would like to thank the Japan Society for the Promotion of Science (JSPS) for its generous support through the grant with JSPS KAKENHI Grant Number 15K17586. The second author would like to thank the National Science Foundation (NSF) for its generous support through Grants DMS-1565808 and DMS-1849295.

\section{Definitions and basic concepts}\label{sec:basic}

Throughout this paper, we work in \textsf{ZF} without the Axiom of Choice. For a nonempty set $A$, the axiom { $\textsf{DC}_A$} states that for any relation $R$ on $A$ such that for any element $x$ of $A$ there is an element $y$ of $A$ with $(x,y) \in R$, there is a function $f \colon \omega \to A$ such that for all natural numbers $n$, $\bigl( f(n) , f(n+1) \bigr) \in R$. The \textbf{Axiom of Dependent Choices} (\textbf{\textsf{DC}}) states that for any nonempty set $A$, $\textsf{DC}_A$ holds.

For a set $X$, $X^{<\omega}$ denotes the set of all finite sequences of elements of $X$, and $X^{\omega}$ denotes the set of all functions from $\omega$ to $X$. In particular, $2^{\omega}$ denotes the set of all function from $\omega$ to $2 = \{ 0, 1\}$, not an ordinal or a cardinal. For a set $X$, we often consider $X^{\omega}$ as a topological space whose basic open sets are of the form $O_s = \{ x \in X^{\omega} \mid s \subseteq x \}$ for $s \in X^{<\omega}$. For a set $X$ and an infinite cardinal $\kappa$, let $\mathcal{P}_{\kappa} X$ be the set of all subsets $\sigma$ of $X$ such that $\sigma$ is well-orderable and its cardinality is less than $\kappa$. 

Let us review some basic terminology on filters. For a set $Z$, a \textbf{filter on $Z$} is a collection of subsets of $Z$ closed under supersets and finite intersections. A filter on $Z$ is \textbf{$\sigma$-complete} if it is closed under countable intersections. A filter on $Z$ is \textbf{non-trivial} if the empty set $\emptyset$ does not belong to the filter. A filter on $Z$ is an \textbf{ultrafilter} (or a \textbf{measure}) if it is non-trivial and for any subset $A$ of $Z$, either $A$ or $Z \setminus A$ is in the filter. Given a formula $\phi$ and an ultrafilter $\mu$ on $Z$, if the set $A = \{ \sigma \in Z \mid \phi (\sigma) \}$ is in $\mu$, then we say \lq\lq for $\mu$-measure one many $\sigma$, $\phi (\sigma)$ holds".

Let us introduce fineness and normality of ultrafilters on $\mathcal{P}_{\kappa} X$. An ultrafilter $\mu$ on $\mathcal{P}_{\kappa} X$ is \textbf{fine} if for any element $x$ of $X$, for $\mu$-measure one many $\sigma$, $x$ is in $\sigma$. An ultrafilter $\mu$ on $\mathcal{P}_{\kappa} X$ is \textbf{normal} if for any set $A$ in $\mu$ and $f \colon A \to \mathcal{P}_{\kappa} X$ with $\emptyset \neq f(\sigma) \subseteq \sigma$ for all $\sigma \in A$, there is an $x_0 \in X$ such that for $\mu$-measure one many $\sigma$ in $A$, $x_0 \in f(\sigma)$. Notice that this definition of normality is equivalent to the closure under diagonal intersections in \textsf{ZF} while it may not be equivalent to the standard definition of normality with regressive functions $f \colon A \to X$ without the axiom of choice. An ultrafilter on $\mathcal{P}_{\kappa} X$ is a \textbf{fine measure on $\mathcal{P}_{\kappa} X$} if it is $\sigma$-complete and fine. A fine measure on $\mathcal{P}_{\kappa} X$ is a \textbf{normal measure on $\mathcal{P}_{\kappa} X$} if it is normal.

We now introduce the main definitions of this paper:
\begin{definition}
Let $\kappa$ be an infinte cardinal.
\begin{enumerate}
\item Let $X$ be a set.
\begin{enumerate}
\item $\kappa$ is \textbf{$X$-strongly compact} if there is a fine measure on $\mathcal{P}_{\kappa} X$.

\item $\kappa$ is \textbf{$X$-supercompact} if there is a normal measure on $\mathcal{P}_{\kappa} X$.
\end{enumerate}

\item $\kappa$ is \textbf{strongly compact} if for any set $X$, $\kappa$ is $X$-strongly compact.

\item $\kappa$ is \textbf{supercompact} if for any set $X$, $\kappa$ is $X$-supercompact.
\end{enumerate}
\end{definition}

We now review basic notions on determinacy axioms. For a nonempty set $X$, the \textbf{Axiom of Determinacy in $X^{\omega}$} ($\mathbf{\textsf{AD}_X}$) states that for any subset $A$ of $X^{\omega}$, in the Gale-Stewart game with the payoff set $A$, one of the players must have a winning strategy. We write {\textsf{AD}} for $\textsf{AD}_X$. The ordinal $\mathbf{\Theta}$ is defined as the supremum of ordinals which are sujrctive images of $\mathbb{R}$. Under \textsf{ZF}+\textsf{AD}, $\Theta$ is very big, e.g., it is a limit of measurable cardinals wile under \textsf{ZFC}, $\Theta$ is equal to the successor cardinal of the continuum $|\mathbb{R}|$. The \textbf{Ordinal Determinacy} states that for any $\lambda < \Theta$, any continuous function $\pi \colon \lambda^{\omega} \to \omega^{\omega}$, in the Gale-Stewart game with the payoff set $\pi^{-1} (A)$, one of the players must have a winning strategy. In particular, Ordinal Determinacy implies AD while it is still open whether the converse holds under \textsf{ZF}+\textsf{DC}. 

We will introduce the notion of $\infty$-Borel codes. Before that, we review some terminology on trees. Given a set $X$, a \textbf{tree on $X$} is a collection of finite sequences of elements of $X$ closed under subsequences. Given an element $t$ of $X^{<\omega}$, $\text{lh}(t)$ denotes its length, i.e., the domain or the cardinality of $t$. Given a tree $T$ on $X$ and elements $s$ and $t$ of $T$, $s$ is an \textbf{immediate successor of $t$ in $T$} if $s$ is a supersequence of $t$ and $\text{lh}(s) = \text{lh}(t) + 1$. Given a tree $T$ on $X$ and an element $t$ of $T$, $\text{Succ}_T (t)$ denotes the collection of all immediate successors of $t$ in $T$. An element $t$ of a tree $T$ on $X$ is \textbf{terminal} if $\text{Succ}_T (t) = \emptyset$. For an element $t$ of a tree $T$ on $X$, $\text{term}(T)$ denotes the collection of all terminal elements of $T$. Given a tree $T$ on $X$,$\mathbf{[T]}$ denotes the collection of all $x \in X^{\omega}$ such that for all natural numbers $n$, $x \upharpoonright n$ is in $T$. A tree $T$ on $X$ is \textbf{well-founded} if $[T] = \emptyset$. We often identify a tree $T$ on $X \times Y$ with a subset of the set $(s,t) \in X^{<\omega} \times Y^{<\omega} \mid \text{lh} (s) = \text{lh} (t) \}$, and $\text{p}[T]$ denotes the collection of all $x \in X^{\omega}$ such that there is a $y \in Y^{\omega}$ with $(x, y) \in [T]$.

\begin{definition}
Let $\lambda$ be a non-zero ordinal.
\begin{enumerate}
\item An \textbf{$\infty$-Borel code in $\lambda^{\omega}$} is a pair $(T, \rho)$ where $T$ is a well-founded tree on some ordinal $\gamma$, and $\rho$ is a function from $\text{term}(T)$ to $\lambda^{<\omega}$.

\item Given an $\infty$-Borel code $c = ( T , \rho)$ in $\lambda^{\omega}$, to each element $t$ of $T$, we assign a subset $B_{c,t}$ of $\lambda^{\omega}$ by induction on $t$ using the well-foundedness of the tree $T$ as follows:
\begin{enumerate}
\item If $t$ is a terminal element of $T$, let $B_{c,t}$ be the basic open set $O_{\rho (t)}$ in $\lambda^{\omega}$. 

\item If $\text{Succ}_T (t)$ is a singleton of the form $\{s \}$, let $B_{c,t}$ be the complement of $B_{c,s}$.

\item If $\text{Succ}_T (t)$ has more than one element, then let $B_{c,t}$ be the union of all set of the form $B_{c,s}$ where $s $ is in $\text{Succ}_T (t)$. 
\end{enumerate}
We write $B_c$ for $B_{c , \emptyset}$.

\item A subset $A$ of $\lambda^{\omega}$ is \textbf{$\infty$-Borel} if there is an $\infty$-Borel code $c$ in $\lambda^{\omega}$ such that $A = B_c$. 
\end{enumerate}
\end{definition}

Usually, we use $\infty$-Borel codes and $\infty$-Borel sets only in the spaces $\omega^{\omega}$ or $2^{\omega}$. We use them for general spaces $\lambda^{\omega}$ in Secion~\ref{sec:Chang model}.

In section~\ref{sec:Chang model}, we will use the following characterization of $\infty$-Borelness in the space $\lambda^{\omega}$:
\begin{fact}\label{fact:infty-Borel-char}
Let $A$ be a subset of $2^{\omega}$ Then the following are equivalent:
\begin{enumerate}
\item $A$ is $\infty$-Borel,

\item $A$ is $\infty$-Borel as a subset of $\lambda^{\omega}$, and

\item for some formula $\phi$ and some set $S$ of ordinals, for all elements $x$ of $\lambda^{\omega}$, $x $ is in $A$ if and only if $\mathrm{L}[S,x] \vDash \lq\lq \phi (S, x)"$.
\end{enumerate}
\end{fact}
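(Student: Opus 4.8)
The plan is to prove the cycle $(1)\Rightarrow(2)\Rightarrow(3)\Rightarrow(1)$; the only substantial step is $(3)\Rightarrow(1)$, and everything should go through in \textsf{ZF}. Throughout I use that $2^{\omega}\subseteq\lambda^{\omega}$ and that the $\infty$-Borel subsets of a space $X^{\omega}$ are closed under complement, set-sized union, and hence (by De Morgan) set-sized intersection.

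For $(1)\Rightarrow(2)$ I would simply re-read the given code $c$ for $A$ in $2^{\omega}$ as a code in $\lambda^{\omega}$. Since $O^{\lambda^{\omega}}_{s}\cap 2^{\omega}=O^{2^{\omega}}_{s}$ for $s\in 2^{<\omega}$, and since complementation and union commute with intersecting by $2^{\omega}$, an induction on the tree of $c$ shows that the set $B_{c}$ computed in $\lambda^{\omega}$ satisfies $B_{c}\cap 2^{\omega}=A$. Now $2^{\omega}=[\,T_{0}\,]$ for the tree $T_{0}$ of sequences with entries $<2$, so $2^{\omega}$ is closed, hence $\infty$-Borel, in $\lambda^{\omega}$; therefore $A=B_{c}\cap 2^{\omega}$ is $\infty$-Borel in $\lambda^{\omega}$ by closure under intersection.

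For $(2)\Rightarrow(3)$ I would code the given $\infty$-Borel code $c=(T,\rho)$ -- where $T$ is a well-founded tree on an ordinal $\gamma$ and $\rho\colon\mathrm{term}(T)\to\lambda^{<\omega}$ -- by a set $S$ of ordinals, using a definable pairing of finite sequences of ordinals with ordinals (so $S$ records $\gamma$, $\lambda$, $T$, and $\rho$). Let $\phi(S,x)$ be the formula asserting that $S$ decodes to such a pair and that the canonical well-founded recursion on $T$ -- assigning to a terminal $t$ the truth value of $\rho(t)\subseteq x$, to a single successor the negation, and to several successors the disjunction -- yields truth at the root. Since $S,x,T\in\mathrm{L}[S,x]$, since well-foundedness of $T$ is downward absolute to the transitive class $\mathrm{L}[S,x]$, and since a well-founded recursion with absolute atomic data is absolute, $\mathrm{L}[S,x]\vDash\phi(S,x)$ holds iff $x\in B_{c}=A$, for every $x\in\lambda^{\omega}$ (and in particular the equivalence fails for $x\notin 2^{\omega}$, as required since $A\subseteq 2^{\omega}$).

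The heart is $(3)\Rightarrow(1)$: given $S$ and $\phi$ with $A=\{x\in 2^{\omega}\mid \mathrm{L}[S,x]\vDash\phi(S,x)\}$, I would build an $\infty$-Borel code in $2^{\omega}$ in two steps. First comes \emph{uniform reflection}. Fix $n$ with $\phi\in\Sigma_{n}$. By a closing-off argument that at each finite stage takes, over the \emph{set} $2^{\omega}$, the supremum of the least levels $\mathrm{L}_{\beta}[S,x]$ containing $\Sigma_{n}$-witnesses for parameters already accumulated -- a legitimate supremum by Replacement in $V$, and computed with no use of choice -- one obtains a single ordinal $\eta$ with $\mathrm{L}_{\eta}[S,x]\prec_{\Sigma_{n}}\mathrm{L}[S,x]$ for \emph{every} $x\in 2^{\omega}$ simultaneously. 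Since $S,x\in\mathrm{L}_{\eta}[S,x]$, this gives $A=\{x\in 2^{\omega}\mid \mathrm{L}_{\eta}[S,x]\vDash\phi(S,x)\}$, collapsing the class-sized $\mathrm{L}[S,x]$ to one bounded level uniformly in $x$. Second, I would show that for this fixed $\eta$ the set $\{x\in 2^{\omega}\mid \mathrm{L}_{\eta}[S,x]\vDash\psi\}$ is $\infty$-Borel by a simultaneous transfinite recursion: elements of $\mathrm{L}_{\eta}[S,x]$ are denoted by a set $\mathrm{Tm}$ of formal terms depending only on $\eta$ and $S$ (built from ordinals $<\eta$, a symbol for $S$, symbols $\dot{x}(m)$, and the definable $\mathrm{L}$-operations), whose evaluation $\tau\mapsto\tau^{x}$ surjects onto $\mathrm{L}_{\eta}[S,x]$ for every $x$ and depends on $x$ only through the values $\dot{x}(m)^{x}=x(m)$. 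Reading quantifiers as set-sized disjunctions and conjunctions over $\mathrm{Tm}$, a recursion on term rank and formula complexity -- a well-founded ordering, so it produces a genuine well-founded syntax tree -- reduces every atomic statement to a Boolean combination of the basic clopen conditions $\{x\mid x(m)=i\}$ (and, for atoms not mentioning $x$, to the whole space or the empty set). The main obstacle is exactly this second step: setting up the $x$-independent term-naming of $\mathrm{L}_{\eta}[S,x]$ and verifying, by a well-founded recursion, that the atomic relations $\sigma^{x}\in\tau^{x}$ and $\sigma^{x}=\tau^{x}$ are $\infty$-Borel in $x$, so that the entire construction assembles into a single $\infty$-Borel code.
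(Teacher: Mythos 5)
The paper contains nothing to compare your proof against: this Fact is stated as background, with no proof and no citation (it is a standard folklore characterization from the $\mathsf{AD}^{+}$ literature, where the pair $(\phi,S)$ formulation is often taken as the \emph{definition} of an $\infty$-Borel code), and it is then simply invoked in the Chang-model section. Judged on its own, your proposal is essentially the standard argument, and its architecture is sound. The two easy implications are handled correctly: $(1)\Rightarrow(2)$ by re-reading the code in $\lambda^{\omega}$ and intersecting with $2^{\omega}$, which is indeed $\infty$-Borel in $\lambda^{\omega}$ because its complement is an ordinal-indexed union of basic open sets; $(2)\Rightarrow(3)$ by coding $(T,\rho)$ into a set of ordinals and using that evaluation of a code is absolute to transitive inner models, by induction along the well-founded tree. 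For $(3)\Rightarrow(1)$ your two steps are exactly the right ones and both go through in $\mathsf{ZF}$: the uniform reflection works because $2^{\omega}$ is a \emph{set}, so the set-many club classes $\{\eta : \mathrm{L}_{\eta}[S,x] \prec_{\Sigma_{n}} \mathrm{L}[S,x]\}$ for $x \in 2^{\omega}$ can be intersected using Replacement and a definable $\omega$-recursion with no appeal to choice; and the term analysis works because the terms naming elements of $\mathrm{L}_{\eta}[S,x]$ can be taken hierarchically (level, defining formula, subterms), hence form a canonically well-ordered set that does not depend on $x$, so every disjunction arising in the satisfaction recursion is ordinal-indexed and the whole recursion assembles into a single well-founded code whose leaves are the clopen conditions $x(m)=i$.

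Three small repairs to the write-up. (i) Your preliminary claim that $\infty$-Borel sets are closed under ``set-sized union'' needs care in $\mathsf{ZF}$: an arbitrary set of codes need not be well-orderable, so closure should be asserted for ordinal-indexed (or explicitly well-ordered) families of \emph{codes}; every union you actually take is of this form, since your recursions produce the codes definably, so nothing breaks. (ii) In $(2)\Rightarrow(3)$, the biconditional of item (3) must \emph{hold} for every $x \in \lambda^{\omega}$; what you mean in your parenthetical is that both sides are false when $x \notin 2^{\omega}$ (true, since $B_{c} = A \subseteq 2^{\omega}$), not that ``the equivalence fails''. (iii) In the reflection step, choose $\eta$ in the club above $\sup(S)$, so that $S$ itself is an element of $\mathrm{L}_{\eta}[S,x]$ and admits an $x$-independent term (via the predicate for $S$ in the relativized hierarchy, and similarly for the term naming $x$); and ``Boolean combination of clopen conditions'' should read \emph{infinitary} ($\infty$-Borel) combination, which is what your recursion actually produces.
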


We now introduce the axiom $\textsf{AD}^+$, and reivew some notions on Suslin sets. The axiom {$\textsf{AD}^+$} states that (a) $\textsf{DC}_{\mathbb{R}}$ holds, (b) Ordinal Determinacy holds, and (c) every subset of $2^{\omega}$ is $\infty$-Borel. Since $\text{AD}^+$ demands Ordinal Determinacy, $\textsf{AD}^+$ implies $\sf{AD}$ while it is open whether the converse holds in \textsf{ZF}+\textsf{DC}. A subset $A$ of $2^{\omega}$ (ot $\omega^\omega$) is \textbf{Suslin} if there are some ordinal $\lambda$ and a tree $T$ on $2 \times \lambda$ $(\omega\times \lambda$ respectively) such that $A = \text{p} [T]$. $A$ is \textbf{co-Suslin} if the complement of $A$ is Suslin. An infinite cardinal $\lambda$ is a \textbf{Suslin cardinal} if there is a subset $A$ of $2^{\omega}$ ($\omega^\omega$) such that there is a tree on $2 \times \lambda$ ($\omega\times \lambda$) such that $A = \text{p}[T]$ while there are no $\gamma < \lambda$ and a tree $S$ on $2 \times \gamma$ ($\omega\times\lambda$) such that $A = \text{p}[S]$. Under \textsf{ZF}+$\textsf{DC}_{\mathbb{R}}$, $\textsf{AD}^+$ is equivalent to the assertion that Suslin cardinals are closed below $\Theta$ in the order topology of $(\Theta , <)$.

\section{Choice principles and supercompactness of $\omega_1$}\label{sec:DC}

In this section, we prove Theorem \ref{thm:choice}.

\begin{proof}[Theorem \ref{thm:choice}]
1. Let $A$ be any nonempty set and $R$ be any relation on $A$ such that for any $x \in A$ there is a $y \in A$ such that $(x,y) \in R$. We will show that there is a function $f \colon \omega \to A$ such that for all natural numbers $n$, $\bigl( f(n) , f(n+1) \bigr) \in R$. 

Since $\omega_1$ is supercompact, there is a fine normal measure on $\mathcal{P}_{\omega_1} A$. We fix such a measure $\mu$. 

\begin{claim}\label{elementarity}
For $\mu$-measure one many elements $\sigma$ of $\mathcal{P}_{\omega_1} A$, the following holds:
\begin{align*}
(\forall x \in \sigma ) \ ( \exists y \in \sigma ) \ ( x,y) \in R
\end{align*}
\end{claim}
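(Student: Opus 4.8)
The plan is to argue by contradiction, using normality of $\mu$ to extract a single ``problematic'' element of $A$ and then fineness to clash with the hypothesis on $R$. Since $\mu$ is an ultrafilter, it suffices to show that the set
\[
B = \{ \sigma \in \mathcal{P}_{\omega_1} A : (\exists x \in \sigma)(\forall y \in \sigma)\ (x,y) \notin R \}
\]
does not belong to $\mu$; for then its complement, which is exactly the set described in the claim, belongs to $\mu$. So I would assume toward a contradiction that $B \in \mu$.

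First I would define, for each $\sigma \in B$, the witnessing set $f(\sigma) = \{ x \in \sigma : (\forall y \in \sigma)\ (x,y) \notin R \}$. The key feature is that $f(\sigma)$ is defined uniformly from $\sigma$ and $R$ with no appeal to choice, and by the definition of $B$ it is a nonempty subset of $\sigma$; since $\sigma$ is countable, $f(\sigma) \in \mathcal{P}_{\omega_1} A$. Thus $f \colon B \to \mathcal{P}_{\omega_1} A$ satisfies the hypotheses of normality. Applying normality of $\mu$ to $f$, I obtain a single $x_0 \in A$ with $x_0 \in f(\sigma)$ for $\mu$-measure one many $\sigma$; unravelling, this says that for measure one many $\sigma$ we have both $x_0 \in \sigma$ and $(\forall y \in \sigma)\ (x_0, y) \notin R$.

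Next I would invoke the hypothesis on $R$ for the specific element $x_0$: there is some $y_0 \in A$ with $(x_0, y_0) \in R$. Here lies the only conceptual subtlety, and the place the argument could go wrong without care: I need merely a \emph{single} witness $y_0$ for the \emph{single} element $x_0$, rather than a choice function $x \mapsto y$ defined on all of $A$, the latter being exactly what is unavailable in \textsf{ZF}. By fineness of $\mu$, the set $\{ \sigma : y_0 \in \sigma \}$ has measure one. Finally I would intersect the two measure-one sets $\{ \sigma : x_0 \in f(\sigma) \}$ and $\{ \sigma : y_0 \in \sigma \}$; as $\mu$ is a filter this intersection lies in $\mu$, hence is nonempty since $\mu$ is non-trivial. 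Any $\sigma$ in it satisfies both $(\forall y \in \sigma)\ (x_0, y) \notin R$ and $y_0 \in \sigma$ with $(x_0, y_0) \in R$, a contradiction. This shows $B \notin \mu$ and establishes the claim.

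The main obstacle is thus not computational but a matter of bookkeeping over choice: every selection in the argument must be either of a single object (the witness $y_0$) or replaced by the uniform, choice-free use of normality (to produce $x_0$), so that the entire derivation goes through in \textsf{ZF}. The diagonal-intersection form of normality recorded in Section~\ref{sec:basic} is precisely what makes this possible.
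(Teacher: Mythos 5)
Your proposal is correct and follows essentially the same route as the paper's own proof: contradiction, normality of $\mu$ to extract a single $x_0$, the hypothesis on $R$ to get a single witness $y_0$, and fineness plus the filter property to produce a contradictory $\sigma$. The only difference is that you make explicit the witness-set function $f(\sigma) = \{ x \in \sigma : (\forall y \in \sigma)\ (x,y) \notin R \}$ needed to apply the paper's set-valued formulation of normality, a detail the paper leaves implicit.
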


\begin{proof}[Claim~\ref{elementarity}]
Suppose not. We will derive a contradiction using $\mu$. Since $\mu$ is an ultrafilter on $\mathcal{P}_{\omega_1} A$, for $\mu$-measure one many elements $\sigma$ of $\mathcal{P}_{\omega_1} A$, the following holds:
\begin{align*}
(\exists x \in \sigma ) \ ( \forall y \in \sigma) \ (x,y) \notin R
\end{align*}
By normality of $\mu$, there is an $x_0 \in A$ such that for $\mu$-measure one many elements $\sigma$ of $\mathcal{P}_{\omega_1} A$ with $x_0 \in \sigma$, for all $y \in \sigma$, $(x_0 , y) \notin R$. 

On the other hand, by the assumption on $R$, there is a $y_0 \in A$ such that $(x_0, y_0) \in R$. By fineness of $\mu$, for $\mu$-measure one many elements $\sigma$ of $\mathcal{P}_{\omega_1} A$, both $x_0$ and $y_0$ are elements of $\sigma$.

Since $\mu$ is a filter, for $\mu$-measure one many elements $\sigma$ of $\mathcal{P}_{\omega_1} A$ with $x_0 \in \sigma$, for all $y \in \sigma$, $(x_0 , y) \notin R$ while both $x_0$ and $y_0$ are elements of $\sigma$ and $(x_0, y_0) \in R$. This gives us both $(x_0, y_0) \notin R$ and $(x_0, y_0 ) \in R$, a contradiction. This finishes the proof of the claim.
\qed
\end{proof}

We now know that for $\mu$-measure one many elements $\sigma$ of $\mathcal{P}_{\omega_1} A$, the following holds:
\begin{align*}
(\forall x \in \sigma ) \ ( \exists y \in \sigma) \ (x,y) \in R
\end{align*}

Let us pick such a $\sigma$. Then for any $x \in \sigma$, there is a $y \in \sigma$ such that $(x,y) \in R$. Since $\sigma$ is an element of $\mathcal{P}_{\omega_1} A$, it is countable, so we can fix a surjection $\pi \colon \omega \to \sigma$. Using this $\pi$, the above property of $\sigma$, and the well-orderedness of $(\omega , <)$, one can easily construct a desired $f \colon \omega \to A$. This finishes the proof of 1..

2. Suppose that there was an injection $i \colon \omega_1 \to 2^{\omega}$. We will derive a contradiction using supercompactness of $\omega_1$. For each $\alpha < \omega_1$, we write $x_{\alpha}$ for $i(\alpha)$. 

We first note that there is a non-principal $\sigma$-complete ultrafilter on $\omega_1$, i.e., $\omega_1$ is measurable. Since $\omega_1$ is supercompact, we can fix a fine normal measure $\mu$ on $\mathcal{P}_{\omega_1} \omega_1$. Let $\nu$ be as follows:
\begin{align*}
\nu = \{ A \subseteq \omega_1 \mid \text{ for $\mu$-measure one many elements $\sigma$ of $\mathcal{P}_{\omega_1} \omega_1$, $\sup \sigma \in A$} \}
\end{align*}
Then it is easy to see that $\nu$ is a non-principal $\sigma$-complete ultrafilter on $\omega_1$. 

Using this $\nu$, we will derive a contradiction as follows. Since $\nu$ is an ultrafilter on $\omega_1$, for any natural number $n$, there is an $k_n \in \{ 0, 1 \}$ such that the set $A_n = \{ \alpha < \omega_1 \mid x_{\alpha} (n) = k_n \}$ is of $\nu$-measure one. Since $\nu$ is $\sigma$-complete, the set $\displaystyle A = \bigcap_{n\in \omega} A_n$ is of $\nu$-measure one. By the property of each $A_n$, for any $\alpha$ in $A$, for all natural numbers $n$, $x_{\alpha} (n) = k_n$. But since $i$ is injective, $A$ has at most one element. This contradicts that $A$ is of $\nu$-measure one and $\nu$ is non-principal. This finishes the proof of 2.. This completes the proof of Theorem~\ref{thm:choice}.
\qed
\end{proof}

\begin{remark}
(2) of Theorem \ref{thm:choice} is the best one can hope for. ``$\omega_1$ is supercompact" does not imply ``there is no injection $f:\omega_2 \rightarrow \mathcal{P}(\omega_1)$". To see this, assume $\sf{ZFC}$ and there is a supercompact cardinal $\kappa$. Let $f: \kappa^+ \rightarrow \mathcal{P}(\kappa)$ be an injection in $V$. Let $\mathcal{T}$ be the Takeuti model defined at $\kappa$. Then clearly $f\in \mathcal{T}$ and in $\mathcal{T}$, $\kappa = \omega_1$ and $(\kappa^+)^V = \omega_2$.
\end{remark}

\section{Chang model and supercompactness of $\omega_1$}\label{sec:Chang model}

In this section, we prove Theorem~\ref{thm:infty-Borel}.
As a corollary, one can obtain usual regularity properties for sets of reals in the Chang model:
\begin{corollary}\label{cor:regularity}
Assume that $\omega_1$ is supercompact. Then every subset of $2^{\omega}$ in the Chang model is Lebesgue measurable and has the Baire property.
\end{corollary}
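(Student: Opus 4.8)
The plan is to combine Theorem~\ref{thm:infty-Borel} with a Solovay-style genericity argument, using the full strength of supercompactness of $\omega_1$ (rather than mere measurability) to handle $\infty$-Borel codes that are arbitrary sets of ordinals. Fix $A \subseteq 2^{\omega}$ in the Chang model. By Theorem~\ref{thm:infty-Borel} the set $A$ is $\infty$-Borel, so by Fact~\ref{fact:infty-Borel-char} there are a formula $\phi$ and a set of ordinals $S$ with
\[
x \in A \iff \mathrm{L}[S,x] \vDash \phi(S,x) \qquad (x \in 2^{\omega}).
\]
I would then reduce both regularity properties to a single genericity lemma: that the random reals over $\mathrm{L}[S]$ are conull and the Cohen reals over $\mathrm{L}[S]$ are comeager. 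Granting this, the standard Solovay localization finishes the proof. For Lebesgue measurability, work with the random algebra $\mathbb{B} = (\mathrm{Borel}/\mathrm{null})^{\mathrm{L}[S]}$ and let $B^{\ast}$ be a Borel set, coded in $\mathrm{L}[S]$, representing the Boolean value of $\phi(S,\dot r)$. For every real $r$ random over $\mathrm{L}[S]$ one has $\mathrm{L}[S,r]=\mathrm{L}[S][r]$, and, by the truth lemma for random forcing, $r \in A \iff r \in B^{\ast}$; since the non-random reals form a null set, $A \triangle B^{\ast}$ is null and $A$ is measurable. The argument for the Baire property is dual, using Cohen forcing, meager sets, and comeagerness of the Cohen reals.

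It remains to prove the genericity lemma from ``$\omega_1$ is supercompact'', and this is where the hypothesis enters essentially; note that $\textsf{DC}$ is available throughout by Theorem~\ref{thm:choice}. I treat the measure case, the category case being entirely parallel. Since $\omega_1$ is supercompact it is $\mathbb{R}$-strongly compact, so fix a fine measure $\mu$ on $\mathcal{P}_{\omega_1}(\mathbb{R})$. Let $U$ be the set of reals that are not random over $\mathrm{L}[S]$, i.e.\ the union of all null Borel sets with codes in $\mathbb{R}^{\mathrm{L}[S]}$. For a countable $\sigma \in \mathcal{P}_{\omega_1}(\mathbb{R})$ put $N_{\sigma} = \bigcup \{\, B_{c} : c \in \sigma \text{ codes a null set in } \mathrm{L}[S] \,\}$; being a countable union of null sets along a well-ordering of $\sigma$ (so no choice beyond $\textsf{DC}$ is needed), $N_{\sigma}$ is null. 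Fineness of $\mu$ yields the reformulation
\[
U = \{\, x \in 2^{\omega} : \{\sigma : x \in N_{\sigma}\} \in \mu \,\},
\]
since $x \in B_{c}$ forces $x \in N_{\sigma}$ for all $\sigma \ni c$, and $\{\sigma : c \in \sigma\} \in \mu$. The goal is to conclude that $U$ is null.

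The heart of the matter, and the step I expect to be the main obstacle, is a Fubini-type interchange between Lebesgue measure $m$ and the $\sigma$-complete fine ultrafilter $\mu$. Intuitively, since $m(N_{\sigma}) = 0$ for every $\sigma$, integrating the indicator of $\{(x,\sigma) : x \in N_{\sigma}\}$ first in $x$ gives $0$, so the reversed order should show that for $m$-almost every $x$ the set $\{\sigma : x \in N_{\sigma}\}$ is $\mu$-null; by the displayed reformulation such $x$ avoid $U$, whence $U$ is null. Making this rigorous is delicate because Fubini for a genuine measure against a two-valued ultrafilter can fail in general, so one must exploit $\sigma$-completeness of $\mu$ together with $\textsf{DC}$ and the regularity of $m$ --- for instance by passing to a measurable hull $B \supseteq U$ with $m(B) = m^{\ast}(U)$ and deriving $m(B) = 0$ directly --- rather than invoking an off-the-shelf product measure theorem. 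Once the genericity lemma is secured in both the measure and the category forms, Lebesgue measurability and the Baire property for every $A$ in the Chang model follow as above.
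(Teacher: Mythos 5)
Your reduction of the corollary to a ``genericity lemma'' (random reals over $\mathrm{L}[S]$ conull, Cohen reals comeager) plus Solovay localization is the right skeleton, and the lemma is true; but your proposal never proves it, and the Fubini-type interchange you offer at what you correctly call the heart of the matter is not a valid inference, nor can it be repaired along the lines you suggest. An interchange between Lebesgue measure and a two-valued ultrafilter fails badly as a general principle: under $\mathsf{CH}$ with choice, Sierpi\'{n}ski's set $\{(x,y) : x \prec y\}$, where $\prec$ well-orders $[0,1]$ in type $\omega_1$, has every section in one direction null and every section in the other direction conull. Your situation is exactly of this shape: $U = \bigcup_{\sigma} N_{\sigma}$ is a union of null sets indexed by an uncountable directed family, and pointwise nullity of the $N_{\sigma}$ says nothing about $U$ without a countability input. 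Neither $\sigma$-completeness of $\mu$ nor the measurable-hull trick supplies that input: if $B \supseteq U$ is a measurable hull with $m(B) > 0$, then $m(B \setminus N_{\sigma}) = m(B)$ for every single $\sigma$ is automatic (each $N_{\sigma}$ is null) and yields no contradiction, because $\mu$ only ever ``sees'' countably many of the sets $N_{\sigma}$ at a time through its $\sigma$-completeness. So the key lemma is left unproved, and the route proposed for it would fail.

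What actually closes the gap is not a Fubini property of the strong-compactness measure but the choice-theoretic consequence of supercompactness recorded in Theorem~\ref{thm:choice}(2): there is no injection of $\omega_1$ into $2^{\omega}$. (You could also extract this from your own fine measure $\mu$ on $\mathcal{P}_{\omega_1}(\mathbb{R})$: push $\mu$ forward along a putative injection $i \colon \omega_1 \to 2^{\omega}$ via $\sigma \mapsto \sup\{\alpha + 1 : i(\alpha) \in \sigma\}$ to get a nonprincipal $\sigma$-complete ultrafilter on $\omega_1$, and then run the argument in the paper's proof of Theorem~\ref{thm:choice}(2).) Granting this, for any set of ordinals $S$ the set $\mathbb{R} \cap \mathrm{L}[S]$ is well-orderable, hence countable --- otherwise $\omega_1$ would inject into $2^{\omega}$ --- so your $U$ is a countable union of null Borel sets and is null by $\textsf{DC}$, and dually on the category side; no interchange of limits is ever needed. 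This is precisely the paper's proof: it combines Theorem~\ref{thm:infty-Borel} and Theorem~\ref{thm:choice} with Fact~\ref{fact:regularity} (essentially Solovay), which says that ``no injection of $\omega_1$ into $2^{\omega}$'' together with $\infty$-Borelness already gives Lebesgue measurability and the Baire property. Replace your Fubini step by the countability of $\mathbb{R} \cap \mathrm{L}[S]$ and your argument becomes that proof.
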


Corollary~\ref{cor:regularity} directly follows from Theorem~\ref{thm:choice}, Theorem~\ref{thm:infty-Borel}, and the following fact:
\begin{fact}[Essentially Solovay]\label{fact:regularity}
Assume that there is no injection from $\omega_1$ to $2^{\omega}$. Let $A$ be a subset of $2^{\omega}$ which is $\infty$-Borel. Then $A$ is Lebesgue measurable and has the Baire property.
\end{fact}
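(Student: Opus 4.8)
The plan is to run Solovay's classical symmetry argument, feeding it the $\infty$-Borel representation of $A$ supplied by Fact~\ref{fact:infty-Borel-char}. Fix a formula $\phi$ and a set $S$ of ordinals so that for every $x \in 2^{\omega}$, $x \in A$ if and only if $\mathrm{L}[S,x] \vDash \phi(S,x)$. The single consequence of the hypothesis that I need is that $\mathrm{L}[S]$ has only countably many reals. Indeed, $\mathrm{L}[S]$ is a model of $\sf{ZFC}$, so $(2^{\omega})^{\mathrm{L}[S]}$ carries a canonical wellordering in $\mathrm{L}[S]$; if the order type of this wellordering were at least $\omega_1$, then enumerating its first $\omega_1$ elements would give an injection from $\omega_1$ into $2^{\omega}$, contradicting the assumption. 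Hence $(2^{\omega})^{\mathrm{L}[S]}$ has order type below $\omega_1$ and is therefore countable in $V$; in particular we obtain an explicit enumeration of it, which is all the choice the rest of the argument will use.

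With countably many reals in $\mathrm{L}[S]$, the relevant sets of generics are large and this largeness is witnessed in $\sf{ZF}$. For the Baire property I would use Cohen forcing $\mathbb{C} = 2^{<\omega}$: the dense subsets of $\mathbb{C}$ belonging to $\mathrm{L}[S]$ are coded by reals of $\mathrm{L}[S]$, so there are only countably many of them, and the set $G$ of Cohen reals over $\mathrm{L}[S]$ is comeager. For Lebesgue measurability I would instead use random forcing: the Borel null sets coded in $\mathrm{L}[S]$ are again countably many, so their union is null and the set of random reals over $\mathrm{L}[S]$ has measure one.

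The symmetry step then finishes each case. For the Baire property, set $B = \bigcup \{ [s] \mid s \in 2^{<\omega} \text{ and } s \Vdash^{\mathrm{L}[S]}_{\mathbb{C}} \phi(\check S, \dot x) \}$, where $[s]$ is the basic clopen set determined by $s$ and $\dot x$ names the Cohen generic; this $B$ is open. For a Cohen-generic $x$ over $\mathrm{L}[S]$ one has $\mathrm{L}[S,x] = \mathrm{L}[S][x]$, so by the forcing theorem $x \in A$ holds exactly when some initial segment of $x$ forces $\phi$, i.e. exactly when $x \in B$. Thus $A$ and $B$ agree on the comeager set $G$, so $A \triangle B$ is meager and $A$ has the Baire property. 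The measure case is the same in spirit: the Boolean value of $\phi(\check S, \dot r)$ in the random algebra of $\mathrm{L}[S]$ is represented by a Borel set $B$ coded in $\mathrm{L}[S]$, and for random $x$ over $\mathrm{L}[S]$ one has $x \in A$ if and only if $x \in B$; since the random reals have measure one, $A \triangle B$ is null and $A$ is Lebesgue measurable.

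The point that demands care, and the main obstacle, is the identification $\mathrm{L}[S,x] = \mathrm{L}[S][x]$ together with the correct use of the forcing theorem inside $\mathrm{L}[S]$: one must verify that for $x$ generic over $\mathrm{L}[S]$ the model in which the $\infty$-Borel condition is evaluated coincides with the Cohen (respectively random) extension, so that the truth of $\phi(S,x)$ is genuinely decided by a condition in the generic filter. Granting this, the remainder is the routine bookkeeping of Solovay's argument, and the reduction of the hypothesis to countability of $(2^{\omega})^{\mathrm{L}[S]}$ is precisely what lets that bookkeeping proceed in $\sf{ZF}$.
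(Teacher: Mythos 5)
Your argument is correct, and it is precisely the classical Solovay symmetry argument behind Fact~\ref{fact:regularity}: the paper itself gives no proof, attributing the fact to Solovay and citing Ikegami's thesis for it, and the proof there runs along the same route you take --- wellorderability of $(2^{\omega})^{\mathrm{L}[S]}$ reduces the hypothesis to countability of the reals of $\mathrm{L}[S]$, after which one compares $A$, on the comeager (respectively measure one) set of Cohen (respectively random) generics over $\mathrm{L}[S]$, with the open (respectively Borel) set determined by the forcing relation, using $\mathrm{L}[S,x]=\mathrm{L}[S][x]$ for generic $x$. The only step you gloss is that in \textsf{ZF} the union of the countably many $\mathrm{L}[S]$-coded null sets is seen to be null not by appealing to countable additivity (which is unavailable in \textsf{ZF}) but by extracting the witnessing open covers uniformly from the codes via the canonical wellordering of $\mathrm{L}[S]$; this is routine bookkeeping rather than a gap.
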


For the proof of Fact~\ref{fact:regularity}, one can refer to e.g., \cite[Theorem~2.4.2 \& Proposition~3.2.13]{Ikegami_PhD}.

To prove Theorem~\ref{thm:infty-Borel}, we prepare some definitions and lemmas. Given a set $X$, let $J(X)$ be the rudimental closure of $X \cup \{ X \}$.
\begin{definition}
Let $(C_{\alpha} \mid \alpha \in \text{Ord})$ be the following sequence:
\begin{enumerate}
\item $C_0 = \mathrm{L}_{\omega}$,

\item $C_{\alpha + 1} = J(C_{\alpha} \cup \alpha^{\omega})$, and

\item $\displaystyle C_{\beta} = \bigcup_{\alpha < \beta} C_{\alpha}$ when $\beta$ is a limit ordinal.
\end{enumerate}
Set $\displaystyle C = \bigcup_{\alpha \in \text{Ord}} C_{\alpha}$.
\end{definition} 

\begin{lemma}\label{lem:Chang1}
\begin{enumerate}

\item $C = \mathrm{L}(\bigcup_{\lambda\in Ord} \lambda^\omega)$.

\item $C_{\lambda}$ is in $\mathrm{L}(\lambda^{\omega})$.

\item For any set $A$ in the Chang model, there is an ordinal $\lambda$ such that $A$ is in $\mathrm{L}(\lambda^{\omega})$.
\end{enumerate}
\end{lemma}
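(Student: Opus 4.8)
The plan is to prove the three parts together, treating the sequence $(C_\alpha \mid \alpha \in \text{Ord})$ as a Jensen-style ($J$-)hierarchy and exploiting the fact that everything used to build $C_\lambda$ already lives inside $\lambda^\omega$. First I would record the elementary structural facts about the sequence: by induction on $\alpha$, each $C_\alpha$ is transitive, the sequence is $\subseteq$-increasing and continuous at limits, and each $C_\alpha$ is closed under rudimentary functions (immediate at limits, and at successors because $J(X)$ is by definition rud-closed). As is standard for such hierarchies, it follows that $C = \bigcup_\alpha C_\alpha$ is a transitive, rud-closed class that contains every ordinal (the ordinals occurring in the $C_\alpha$ are cofinal in $\text{Ord}$) and satisfies $\lambda^\omega \subseteq C_{\lambda+1}$ for every $\lambda$, so that $\bigcup_\lambda \lambda^\omega \subseteq C$. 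By the standard theory of $J$-hierarchies, $C$ is then an inner model of $\textsf{ZF}$ containing $\bigcup_\lambda \lambda^\omega$.

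For part (1) I would prove the two inclusions separately. For $C \subseteq \mathrm{L}(\bigcup_\lambda \lambda^\omega)$, I argue by induction on $\alpha$ that $C_\alpha$ is both a subset and an element of $\mathrm{L}(\bigcup_\lambda \lambda^\omega)$: the base and limit stages are trivial, and at a successor stage $C_{\alpha+1} = J(C_\alpha \cup \alpha^\omega)$ lies in $\mathrm{L}(\bigcup_\lambda \lambda^\omega)$ because that model is rud-closed, contains $C_\alpha$ by the induction hypothesis, and contains $\alpha^\omega$ as a definable subset of its generator. For the reverse inclusion I use that $\mathrm{L}(\bigcup_\lambda \lambda^\omega)$ is, by definition, the least inner model of $\textsf{ZF}$ containing all ordinals and $\bigcup_\lambda \lambda^\omega$; since the previous paragraph exhibits $C$ as such a model, minimality yields $\mathrm{L}(\bigcup_\lambda \lambda^\omega) \subseteq C$. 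Concretely, one verifies $\mathrm{L}_\alpha(\bigcup_\lambda \lambda^\omega) \subseteq C$ by induction, using that the definable power set of a set in the rud-closed class $C$ is again in $C$.

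Part (2) is the technical heart. Fix $\lambda$ and work inside $N := \mathrm{L}(\lambda^\omega)$, a transitive inner model of $\textsf{ZF}$ containing every ordinal and containing $\lambda^\omega$. For each $\alpha < \lambda$ the set $\alpha^\omega = \{ f \in \lambda^\omega \mid \text{ran}(f) \subseteq \alpha \}$ is definable from $\lambda^\omega$ and $\alpha$, hence belongs to $N$, and by Replacement in $N$ the sequence $\langle \alpha^\omega \mid \alpha < \lambda \rangle$ is an element of $N$. Since the recursion defining $(C_\alpha)_{\alpha \le \lambda}$ uses only this sequence together with the operation $J$, and since $J$ and the sets $\alpha^\omega$ are computed the same way in $N$ as in $V$ (absoluteness of rudimentary closure between transitive rud-closed classes, together with the fact that $N$ contains the genuine $\lambda^\omega$), the recursion can be carried out inside $N$ and produces the true objects $C_\alpha$ for $\alpha \le \lambda$. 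Thus $\langle C_\alpha \mid \alpha \le \lambda \rangle$, and in particular $C_\lambda$, belongs to $N = \mathrm{L}(\lambda^\omega)$. Part (3) is then immediate: any $A$ in the Chang model lies in some $C_\beta$ by part (1), by part (2) we have $C_\beta \in \mathrm{L}(\beta^\omega)$, and since $\mathrm{L}(\beta^\omega)$ is transitive this gives $A \in \mathrm{L}(\beta^\omega)$, so $\lambda = \beta$ works.

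The step I expect to be the main obstacle is the absoluteness claim in part (2): one must check carefully that the internal recursion in $N$ reproduces the genuine $C_\alpha$, i.e. that $N$ misses no element of the rudimentary closures or of the sets $\alpha^\omega$. This reduces to the standard absoluteness of rudimentary functions and of the operation $X \mapsto J(X)$ between transitive rud-closed models, together with the key observation that $N$ already contains all of $\lambda^\omega$, so no $\omega$-sequence of ordinals below $\lambda$ is lost; once these are in place, the recursion theorem inside $N$ finishes the argument.
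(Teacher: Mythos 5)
Your proposal is correct and takes essentially the same route as the paper: part (1) by showing the hierarchy $(C_\alpha)$ can be reproduced inside the Chang model and that $C$ is a transitive, rud-closed, almost universal (every subset of $C$ is covered by some $C_\alpha$) inner model of \textsf{ZF} containing all $\omega$-sequences of ordinals, so minimality of the Chang model gives the reverse inclusion; part (2) by running the recursion inside $\mathrm{L}(\lambda^{\omega})$, which works precisely because $\lambda^{\omega} \in \mathrm{L}(\lambda^{\omega})$; and part (3) by combining (1), (2), and transitivity of $\mathrm{L}(\lambda^{\omega})$. The paper's proof is simply a terser version of this same argument, so there is nothing substantive to compare.
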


\begin{proof}[Lemma~\ref{lem:Chang1}]

For 1., it is easy to see that $C$ is contained in the Chang model because the construction of the sequence $(C_{\alpha} \mid \alpha \in \text{Ord})$ is absolute between the Chang model and $V$. So it is enough to prove that $C$ contains the Chang model. For that it is enough to show that $C$ is an inner model of ZF containing all sets in $\text{Ord}^{\omega}$. By the construction of $(C_{\alpha} \mid \alpha \in \text{Ord})$, it is easy to see that $C$ contains all the sets in $\text{Ord}^{\omega}$, rudimentarily closed, satisfies Comprehension Scheme, and for any subset $X$ of $C$ in $V$, there is a set $Y$ in $C$ such that $X \subseteq Y$ (namely $C_{\alpha}$ for some big $\alpha$). Therefore, $C$ is an inner model of ZF containing all the sets in $\text{Ord}^{\omega}$, as desired.

For 2., it is enough to see that the construction of the sequence $(C_{\alpha} \mid \alpha \le \lambda)$ is absolute between $\mathrm{L}(\lambda^{\omega})$ and $V$, which follows by observing that $\lambda^{\omega}$ is in $\mathrm{L}(\lambda^{\omega})$.

For 3., let $A$ be any set in the Chang model. By 1., $A$ is in $C$ and hence there is an ordinal $\lambda$ such that $A$ is in $C_{\lambda}$. By 2., $C_{\lambda}$ is in $\mathrm{L}(\lambda^{\omega})$. Therefore, $A$ is in $\mathrm{L}(\lambda^{\omega})$, as desired. This completes the proof of Lemma~\ref{lem:Chang1}.
\qed
\end{proof}

By item 3. of Lemma~\ref{lem:Chang1}, to obtain Theorem~\ref{thm:infty-Borel}, it is enough to prove that for all $\lambda$, every subset of $2^{\omega}$ in $\mathrm{L}(\lambda^{\omega})$ is $\infty$-Borel. 

Throughout this section, we fix a non-zero ordinal $\lambda$ and a fine measure $\mu$ on $\mathcal{P}_{\omega_1} (\lambda^{\omega})$. We will show that every subset of $2^{\omega}$ in $\mathrm{L}(\lambda^{\omega})$ is $\infty$-Borel using $\mu$.

By Fact~\ref{fact:infty-Borel-char}, it is enough to show the following:

($\ast$) For any subset $A$ of $\lambda^{\omega}$ in $\mathrm{L}(\lambda^{\omega})$, there are some formula $\phi$ and a set $S$ of ordinals such that for all elements $x$ of $\lambda^{\omega}$, 
\begin{align}
x \in A \iff \mathrm{L}[S, x] \vDash \phi [S,x].\tag{$\dagger$}
\end{align}

If ($\dagger$) holds for all elements $x$ of $\lambda^{\omega}$, then we say that $A$ is \textbf{defined from the pair $(\phi , S)$} and we write $B_{(\phi , S)}$ for $A$.

The following is the key lemma in this section:
\begin{lemma}\label{lem:key lemma}
There is a function $F$ which is OD from $\mu$ such that 
\begin{enumerate}
\item $F$ is defined for all pairs $(\phi , S)$ where $\phi$ is a formula and $S$ is a set of ordinals,

\item $F(\phi, S)$ is of the form $(\psi , T)$ such that if $A$ is a subset of $(\lambda^{\omega})^{n+1}$ defined from $(\phi , S)$, then $\text{p}A = \{ \vec{x} \in (\lambda^{\omega})^n \mid (\exists y) \ (\vec{x},y) \in A\}$ is defined from $(\psi, T)$, i.e., if $A = B_{(\phi , S)}$, then $\text{p}A = B_{F(\phi, S)}$.
\end{enumerate}
\end{lemma}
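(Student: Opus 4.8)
The plan is to isolate a single $\mu$-measure-one assertion equivalent to $\vec{x}\in\mathrm{p}A$, and then to distil from $\mu$ a set of ordinals that lets this assertion be evaluated first-order inside $\mathrm{L}[T,\vec{x}]$. Fix $n$ and a pair $(\phi,S)$ with $A=B_{(\phi,S)}\subseteq(\lambda^{\omega})^{n+1}$, and for $\vec{x}\in(\lambda^{\omega})^{n}$ put $G_{\vec{x}}=\{\sigma\in\mathcal{P}_{\omega_1}(\lambda^{\omega})\mid(\exists y\in\sigma)\,(\vec{x},y)\in A\}$. The first step, which is completely choice-free, records that
\[
\vec{x}\in\mathrm{p}A\iff G_{\vec{x}}\in\mu :
\]
if $y_0$ witnesses $\vec{x}\in\mathrm{p}A$ then by fineness $\{\sigma\mid y_0\in\sigma\}\subseteq G_{\vec{x}}$ is in $\mu$, and conversely any $\sigma\in G_{\vec{x}}$ (which exists since $\mu$ is non-trivial, so $\emptyset\notin\mu$) yields a witness.

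The substantial step is to convert ``$G_{\vec{x}}\in\mu$'' into an $\mathrm{L}[T,\vec{x}]$-statement uniformly in $\vec{x}$. For this I would pass to the ultrapower $j\colon V\to M=\mathrm{Ult}(V,\mu)$, which is OD from $\mu$, with seed $\tau=[\mathrm{id}]_{\mu}$; note $j``\lambda^{\omega}\subseteq\tau$ by fineness. Applying {\L}o\'s's theorem to the formula $(\exists y\in\sigma)\,(\vec{x},y)\in A$ and decoding $j(A)=B_{(\phi,j(S))}^{M}$ together with the absoluteness of the constructibility hierarchy gives
\[
G_{\vec{x}}\in\mu\iff M\vDash(\exists y\in\tau)\,(j(\vec{x}),y)\in j(A)\iff(\exists y\in\tau)\,\mathrm{L}[j(S),j(\vec{x}),y]\vDash\phi .
\]
To produce the witnessing function required by the nontrivial direction of {\L}o\'s without invoking choice, I would uniformize canonically: for $\sigma\in G_{\vec{x}}$ let $f(\sigma)$ be the $<_{\mathrm{L}[S,\vec{x},\sigma]}$-least $y\in\sigma$ with $\mathrm{L}[S,\vec{x},y]\vDash\phi$, which is definable from $(\sigma,S,\vec{x})$ using the canonical well-order of $\mathrm{L}[S,\vec{x},\sigma]$; then $[f]_{\mu}\in\tau$ witnesses the right-hand side.

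Now I would encode the displayed equivalence. Since the seed is small in $M$ (a.e.\ $\sigma$ is countable, so $M\vDash\lvert\tau\rvert<j(\omega_1)$), $M$ has a surjection $g\colon\delta\to\tau$ with $\delta<j(\omega_1)$; as an element of $M$, $g$ is a $\delta$-sequence of $\omega$-sequences of ordinals below $j(\lambda)$, hence codeable by a set of ordinals. I would let $T$ code the triple $\bigl(j(S),\,j\restriction\lambda,\,g\bigr)$ and let $\psi$ be the formula that decodes $T$, recovers $j(\vec{x})$ from $\vec{x}$ by applying $j\restriction\lambda$ coordinatewise, and asserts $(\exists\beta<\delta)\,\mathrm{L}[j(S),j(\vec{x}),g(\beta)]\vDash\phi$. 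Because $j(S)$, $g$, each $g(\beta)$ and hence $j(\vec{x})$ all lie in $\mathrm{L}[T,\vec{x}]$, and $\mathrm{L}[\cdot]$ is absolutely computed there, $\mathrm{L}[T,\vec{x}]\vDash\psi[T,\vec{x}]$ is equivalent to the right-hand side above, hence to $\vec{x}\in\mathrm{p}A$; thus $\mathrm{p}A=B_{(\psi,T)}$. Every ingredient ($j$, $\tau$, $g$, $j(S)$, $j\restriction\lambda$, the fixed coding, and the recursive passage $\phi\mapsto\psi$) is OD from $\mu$, so setting $F(\phi,S)=(\psi,T)$ gives a total function that is OD from $\mu$, as Fact~\ref{fact:infty-Borel-char} then lets us use for $\mathrm{p}A$.

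The main obstacle, as the above already signals, is reconciling two facts: $T$ must be a set of ordinals, whereas $\mu$ lives on the typically non-well-orderable set $\mathcal{P}_{\omega_1}(\lambda^{\omega})$, and indeed the natural witness-domain $\tau\supseteq j``\lambda^{\omega}$ is itself non-well-orderable in $V$. The ultrapower is precisely the device that extracts from $\mu$ the ordinal-coded data ($g$, $j\restriction\lambda$, $j(S)$) needed to internalize the measure-one statement, replacing the intractable quantifier $\exists y\in\tau$ by a well-ordered search $\exists\beta<\delta$ inside $\mathrm{L}[T,\vec{x}]$. Making this go through in \textsf{ZF} --- justifying the relevant instances of {\L}o\'s's theorem and the enumerability of the small seed without the Axiom of Choice --- is the delicate part; I expect to handle it via the canonical definable uniformizations indicated above together with \textsf{DC}, which is available by Theorem~\ref{thm:choice}.
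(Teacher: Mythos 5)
Your high-level skeleton --- use fineness to reduce $\vec{x}\in\mathrm{p}A$ to a $\mu$-measure-one statement, then extract ordinal-coded data from $\mu$ via an ultrapower so that the statement can be evaluated inside $\mathrm{L}[T,\vec{x}]$ --- is the same as the paper's, and your first equivalence ($\vec{x}\in\mathrm{p}A\iff G_{\vec{x}}\in\mu$) is exactly the paper's fineness step. But there is a genuine gap, and it sits precisely where you flag ``the delicate part'': both your justification of {\L}o\'s and your enumeration of the seed require choosing, $\mu$-almost everywhere, data \emph{from the sets $\sigma$}, and this is not available in $\mathsf{ZF}+\mathsf{DC}$. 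Concretely, your canonical uniformization ``let $f(\sigma)$ be the $<_{\mathrm{L}[S,\vec{x},\sigma]}$-least $y\in\sigma$ with $\mathrm{L}[S,\vec{x},y]\vDash\phi$'' is incoherent: if $\mathrm{L}[S,\vec{x},\sigma]$ means relativized constructibility with $\sigma$ as a \emph{predicate}, the model has a definable well-order but need not contain any element of $\sigma$ at all --- e.g.\ if $\sigma=\{y\}$ with $y\notin\mathrm{L}[S,\vec{x}]$, then by induction the predicate $\{y\}\cap L_{\alpha}[S,\vec{x}]$ is empty at every stage, so $\mathrm{L}[S,\vec{x},\{y\}]=\mathrm{L}[S,\vec{x}]\not\ni y$ --- hence there may be no witness there to select; if instead you mean $\mathrm{L}(S,\vec{x},\sigma)$ with $\sigma$ as an \emph{element}, all witnesses are present but the model has no definable well-order, so ``least'' is meaningless. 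You cannot have both at once. The same obstruction invalidates your claim that $M$ contains a surjection $g\colon\delta\to\tau$ with $\delta<j(\omega_1)$: transferring ``a.e.\ $\sigma$ is countable'' to ``$\tau$ is countable in $M$'' is an instance of the choice direction of {\L}o\'s for the statement ``there exists a surjection $e\colon\omega\to\sigma$'', i.e.\ it requires a $\mu$-a.e.\ selection of enumerations $\sigma\mapsto e_{\sigma}$. Such a selection is a strong choice principle (it would code each $\sigma$ by a single element of $\lambda^{\omega}$, making the whole lemma nearly trivial), and nothing in $\mathsf{ZF}+\mathsf{DC}+$``$\omega_1$ is supercompact'' provides it. So the object $g$ at the heart of your set of ordinals $T$, and with it the formula $\psi$, is not known to exist.

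The missing idea --- and the reason the paper's proof looks the way it does --- is Vop\v{e}nka's theorem. Instead of $\mathrm{Ult}(V,\mu)$, the paper works for each $\sigma$ with the definably well-orderable model $M_{\sigma}=\mathrm{HOD}^{\mathrm{L}(S,\sigma)}_{S,\sigma}$ and its Vop\v{e}nka algebra $Q_{\sigma}$: every element of $\sigma$ is $Q_{\sigma}$-generic over $M_{\sigma}$, so the troublesome existential ``$\exists y\in\sigma$ with $(\vec x,y)\in B_{(\phi,S)}$'' becomes a \emph{forcing} statement evaluated inside $\mathrm{L}[S,Q_{\sigma},\vec{x}]$ (no witness is ever chosen, only forced to exist; conversely, since $\mathcal{P}(Q_{\sigma})^{\mathrm{L}[S,Q_{\sigma},\vec{x}]}$ is countable in $V$, generic filters exist in $V$ and produce genuine witnesses). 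One then takes the ultraproduct $\prod_{\sigma}\mathrm{L}[S,Q_{\sigma},\vec{x}]/\mu$ of these \emph{well-ordered} structures, where {\L}o\'s is provable in $\mathsf{ZF}$ from fineness because least witnesses are definable in each factor, and $\mathsf{DC}$ gives well-foundedness; the resulting model is $\mathrm{L}[S_{\infty},Q_{\infty},\vec{x}]$ with $S_{\infty},Q_{\infty}$ coded by a set of ordinals $T$, and $\psi$ is the forcing statement. To repair your proof you would have to replace the full ultrapower and the seed-enumeration step by some such genericity device; as written, the argument does not go through.
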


To prove Lemma~\ref{lem:key lemma}, we use a variant of Vop$\check{\text{e}}$nka algebra: Let $S$ be a set of ordinals and $\sigma$ be an element of $\mathcal{P}_{\omega_1} \lambda^{\omega}$. We fix an injection $\iota \colon \text{OD}_{S, \sigma} \cap \mathcal{P}(\sigma) \to \text{HOD}_{S, \sigma}$ which is $\text{OD}$ from $S$ and $\sigma$ such that for all $t \in \lambda^{<\omega}$, $\iota (O_t) = t$ where $O_t = \{ x \in \sigma \mid t \subseteq x\}$. Set $B_V = \{ \iota (A) \mid A \in \text{OD}_{S,\sigma} \cap \mathcal{P}(\sigma) \}$. For $p,q \in B_V$, $p\le q$ if $\iota^{-1} (p) \subseteq \iota^{-1} (q)$. For an element $x$ of $\sigma$, set $G_x = \{ p \in B_V \mid x \in \iota^{-1}(p)\}$. 
\begin{fact}[Vop$\check{\text{e}}$nka]\label{fact:Vopenka}
\begin{enumerate}
\item In $\text{HOD}_{S,\sigma}$, $B_V$ is a complete Boolean algebra, and 

\item for any element $x$ of $\sigma$, $G_x$ is $B_V$-generic over $\text{HOD}_{S, \sigma}$, and $x$ is in $\mathrm{L}[S, B_V][G_x]$, which is a subclass of $\text{HOD}_{S,\sigma}[G_x]$.
\end{enumerate}
\end{fact}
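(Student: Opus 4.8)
The plan is to run the classical Vop\v{e}nka argument, relativized to the parameters $S$ and $\sigma$, exploiting throughout the elementary principle that a subset of $\sigma$ lies in $\text{OD}_{S,\sigma} \cap \mathcal{P}(\sigma)$ exactly when it is definable from ordinals together with the parameters $S$ and $\sigma$. The two facts I would keep in front of me are that this class is closed under the Boolean operations, and that it is closed under unions indexed by a set that happens to belong to $\text{HOD}_{S,\sigma}$; these are the engines behind the algebra structure and its completeness, respectively.

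For item 1, first I would note that $\text{OD}_{S,\sigma} \cap \mathcal{P}(\sigma)$ is closed under intersection, union, and complementation relative to $\sigma$, so that, transporting these operations across the injection $\iota$, the pair $(B_V , \le)$ becomes a Boolean algebra whose order agrees with $\subseteq$ on preimages, with top element $\iota(\sigma)$. Since $\iota$ and $\text{OD}_{S,\sigma} \cap \mathcal{P}(\sigma)$ are OD from $S , \sigma$ and $B_V \subseteq \text{HOD}_{S,\sigma}$, the set $B_V$ with its operations is itself a member of $\text{HOD}_{S,\sigma}$. For completeness inside $\text{HOD}_{S,\sigma}$, I would take any $X \subseteq B_V$ with $X \in \text{HOD}_{S,\sigma}$, observe that $X$ is then OD from $S , \sigma$, and conclude that $\bigcup \{ \iota^{-1}(p) \mid p \in X \}$ is an OD-from-$(S,\sigma)$ subset of $\sigma$, hence lies in the domain of $\iota$; its image under $\iota$ is the least upper bound of $X$. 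Thus every $\text{HOD}_{S,\sigma}$-subset of $B_V$ has a supremum, which is exactly completeness in $\text{HOD}_{S,\sigma}$.

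For item 2, I would first verify that $G_x$ is a filter: upward closure is immediate from the definition of $\le$, and closure under meets follows from $\iota^{-1}(p \wedge q) = \iota^{-1}(p) \cap \iota^{-1}(q)$ together with $x \in \iota^{-1}(p) \cap \iota^{-1}(q)$ whenever $p , q \in G_x$. Genericity is where completeness pays off: given a dense $D \in \text{HOD}_{S,\sigma}$ consisting of nonzero conditions, the supremum computed in item 1 is $\bigvee D = \iota(\bigcup \{ \iota^{-1}(p) \mid p \in D \})$, and a dense set has supremum equal to the top $\iota(\sigma)$ in any Boolean algebra; injectivity of $\iota$ then forces $\bigcup \{ \iota^{-1}(p) \mid p \in D \} = \sigma$, so every $x \in \sigma$ lies in some $\iota^{-1}(p)$ with $p \in D$, i.e. $G_x \cap D \neq \emptyset$. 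To recover $x$ I would use the normalization $\iota(O_t) = t$: for each $t \in \lambda^{<\omega}$ one has $t \in G_x$ iff $x \in O_t$ iff $t \subseteq x$, whence $x = \bigcup ( G_x \cap \lambda^{<\omega} )$, a computation carried out in $\mathrm{L}[S , B_V][G_x]$. Since $S$ and $B_V$ belong to $\text{HOD}_{S,\sigma}$ and $G_x$ is generic over it, $\mathrm{L}[S , B_V][G_x] \subseteq \text{HOD}_{S,\sigma}[G_x]$, giving the last clause.

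The hard part will be the completeness clause of item 1 and its deployment in the genericity argument: one must be scrupulous that \emph{membership} of $X$ in $\text{HOD}_{S,\sigma}$ really delivers the OD-from-$(S,\sigma)$-ness that keeps $\bigcup \{ \iota^{-1}(p) \mid p \in X \}$ inside the domain of $\iota$, and that suprema in $B_V$ coincide with honest unions of the associated subsets of $\sigma$, since it is precisely this coincidence that lets density of $D$ translate into ``the union of the $\iota^{-1}(p)$ is all of $\sigma$''. Once this dictionary between the Boolean structure of $B_V$ and set-theoretic operations on $\text{OD}_{S,\sigma} \cap \mathcal{P}(\sigma)$ is nailed down, the remaining filter, genericity, and recovery-of-$x$ verifications are routine.
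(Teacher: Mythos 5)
Your proof is correct: it is the standard Vop\v{e}nka argument relativized to the parameters $S$ and $\sigma$, and the paper itself states this result as a Fact without giving any proof, so there is nothing to diverge from. Your verification hits all the essential points — completeness via closure of $\text{OD}_{S,\sigma} \cap \mathcal{P}(\sigma)$ under unions indexed by sets in $\text{HOD}_{S,\sigma}$ (using that membership in $\text{HOD}_{S,\sigma}$ implies ordinal definability from $S,\sigma$), genericity via the observation that a dense set has supremum equal to the top together with injectivity of $\iota$, and recovery of $x$ as $\bigcup (G_x \cap \lambda^{<\omega})$ from the normalization $\iota(O_t) = t$ — and it correctly accounts for the final clause $\mathrm{L}[S, B_V][G_x] \subseteq \text{HOD}_{S,\sigma}[G_x]$ since $S$ and $B_V$ are members of $\text{HOD}_{S,\sigma}$.
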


Towards a proof of Lemma~\ref{lem:key lemma}, let us fix a fine measure $\mu$ on $\mathcal{P}_{\omega_1} \lambda^{\omega}$. For each $\sigma \in \lambda^{\omega}$, let $M_{\sigma} = \text{HOD}^{\mathrm{L}(S, \sigma)}_{S,\sigma}$ and $Q_{\sigma} = (B_V)^{M_{\sigma}}$. We will consider the ultraproducts $\prod_{\sigma} \mathrm{L}[S, Q_{\sigma}][x] / \mu $ for $x \in \lambda^{\omega}$. Using the finness of $\mu$, one can prove \L os' theorem for these ultraproducts (the proof is essentially the same as the one given in \cite[Lemma~2.3]{LRmu}). By DC, the above ultraproducts are all well-founded and we identify them with their transitive collapses. Set $S_{\infty} = \prod_{\sigma}S / \mu$ and $Q_{\infty} = \prod_{\sigma} Q_{\sigma} / \mu$. 

We are now ready to prove Lemma~\ref{lem:key lemma}.
\begin{proof}[Lemma~\ref{lem:key lemma}]

For simplicity, we will assume $n =1$ (the general case is treated in the same way). Let $A \subseteq (\lambda^{\omega})^2$ be defined from $(\phi, S)$, i.e., $A = B_{(\phi, S)}$. Then for all $x \in \lambda^{\omega}$,
\begin{align*}
& \ x \in \text{p}A\\
\iff & (\exists y \in \lambda^{\omega}) \ (x,y) \in B_{(\phi , S)}\\
\iff & \text{ for $\mu$-measure one many $\sigma$, } \mathrm{L}(S, \sigma) \vDash \lq\lq (\exists y) \ (x,y) \in B_{(\phi , S)}"\\
\iff & \text{ for $\mu$-measure one many $\sigma$, } \\
& \mathrm{L}[S, Q_{\sigma} , x] \vDash \lq\lq \Big(\exists p \in \text{Coll}\bigl(\omega, | \mathcal{P}(Q_{\sigma})| \bigr) \Bigr)  p\Vdash (\exists y) \ (\check{x},y) \in B_{(\phi , S)}"\\
\iff & \prod_{\sigma} \mathrm{L}[S, Q_{\sigma} , x]/\mu \vDash \lq\lq \Bigl( \exists p\in \text{Coll}\bigl(\omega, | \mathcal{P}(Q_{\infty})| \bigr) \Bigr) \ p\Vdash (\exists y) \ (\check{x},y) \in B_{(\phi , S_{\infty})}"\\
\iff & \mathrm{L}[S_{\infty}, Q_{\infty}, x] \vDash \lq\lq \Bigl( \exists p\in \text{Coll}\bigl(\omega, | \mathcal{P}(Q_{\infty})| \bigr) \Bigr) \ p\Vdash (\exists y) \ (\check{x},y) \in B_{(\phi , S_{\infty})}"
\end{align*}

The first equivalence follows from the assumption that $A$ is defined from $(\phi, S)$. The second equivalence follows from the fineness of $\mu$. The forward direction of the third equivalence follows from the property of the Vop$\check{\text{e}}$nka algebra $Q_{\sigma}$ given in Fact~\ref{fact:Vopenka}. The backward direction of the third equivalence follows from the fact that $\mathcal{P}(Q_{\sigma})^{\mathrm{L}[S, Q_{\sigma}, x]}$ is countable in $V$ because $Q_{\sigma}$ is countable by the fact that $M_{\sigma}$ is well-orderable in $V$ and $M_{\sigma}\cap \mathcal{P}(\sigma)$ is countable in $V$, and $\mathrm{L}[S, Q_{\sigma} , x]$ is a transitive model of \textsf{ZFC}. The fourth \& fifth equivalences follow from \L os' theorem for the ultraproduct $\prod_{\sigma} \mathrm{L}[S, Q_{\sigma}][x] / \mu $ and the definitons of $Q_{\infty}$ and $S_{\infty}$. 

Now let $T$ be the set of ordinals simply coding $S_{\infty}$ and $Q_{\infty}$, and $\psi$ be the formula stating $\lq\lq \Bigl( \exists p\in \text{Coll}\bigl(\omega, | \mathcal{P}(Q_{\infty})| \bigr) \Bigr) \ p\Vdash (\exists y) \ (\check{x},y) \in B_{(\phi , S_{\infty})}"$. Then $F(\phi , S) = (\psi , T)$ be as desired by the above equivalences. This completes the proof of Lemma~\ref{lem:key lemma}.
\qed
\end{proof}

We shall prove ($\ast$) above which gives us Theorem~\ref{thm:infty-Borel}. 
The idea is to look at the hierarchy $\bigl(\mathrm{L}_{\alpha} (\lambda^{\omega}) \mid \alpha \in \text{Ord}\bigr)$, and by induction on $\alpha$, to each definition of an element $A$ of $\mathrm{L}_{\alpha} (\lambda^{\omega})$, we assign certain $\phi$ and $S$ such that $A$ is defined from $(\phi , S)$. We fix an $F$ from Lemma~\ref{lem:key lemma}.

\begin{definition}
The hierarchy $\bigl(\mathrm{L}_{\alpha} (\lambda^{\omega}) \mid \alpha \in \text{Ord}\bigr)$ is defined as follows:
\begin{enumerate}
\item $\mathrm{L}_0 (\lambda^{\omega}) = \text{ the transitive closure of $\lambda^{\omega}$}$,

\item $\mathrm{L}_{\alpha +1} (\lambda^{\omega}) = \text{Def} \ \bigl( \mathrm{L}_{\alpha} (\lambda^{\omega}) , \in \bigr)$, and 

\item $\displaystyle \mathrm{L}_{\beta} (\lambda^{\omega}) = \bigcup_{\alpha < \beta} \mathrm{L}_{\alpha} (\lambda^{\omega})$ when $\beta$ is a limit ordinal.
\end{enumerate}
\end{definition}

\begin{remark}\label{rmk:key}
There is a sequence of partial surjections $\bigl( \pi_{\alpha} \colon \alpha^{<\omega} \times \lambda^{\omega} \to \mathrm{L}_{\alpha} (\lambda^{\omega}) \mid \alpha \in \text{Ord} \bigr)$ such that
\begin{enumerate}
\item if $\beta < \alpha$, then $\pi_{\beta} = \pi_{\alpha} \upharpoonright \beta^{<\omega} \times \lambda^{\omega}$, 

\item if $(\vec{\beta}, x ) \in \alpha^{<\omega} \times \lambda^{\omega}$, $\pi_{\alpha} (\vec{\beta}, x)$ is defined, and $\vec{\beta} = (\beta_0, \beta_1, \cdots , \beta_k)$, then $\pi_{\alpha}(\vec{\beta},x)$ is an element of $\mathrm{L}_{\beta_0 +1}(\lambda^{\omega})$ definable in the structure $\bigl(\mathrm{L}_{\beta_0} (\lambda^{\omega}), \in\bigr)$ via a formula coded by $\beta_1$ with some parameters of the form $\pi_{\beta_0} (\vec{\gamma}, y)$ where $\vec{\gamma}$ here depends only on $\vec{\beta}$, not on $x$, and

\item there is a $\vec{\beta} \in \alpha^{<\omega}$ such that for all $x \in \lambda^{\omega}$, $\pi_{\alpha} (\vec{\beta}, x) = x$.
\end{enumerate}
\end{remark}

\begin{definition}\label{key definition}
\begin{enumerate}
\item For an ordinal $\alpha$, let 
\begin{align*}
T^{\alpha} = \{ ( \phi , \vec{\beta}^0 , \cdots , \vec{\beta}^{n-1}, x_0, \cdots , x_{n-1}) \mid \mathrm{L}_{\alpha}(\lambda^{\omega}) \vDash \phi \bigl[ \pi_{\alpha} (\vec{\beta}^0 , x_0) , \cdots , \pi_{\alpha} (\vec{\beta}^{n-1}, x_{n-1})\bigr] \}
\end{align*}

\item For an ordinal $\alpha$, a formula $\phi$, and $\vec{\beta}^0 , \cdots , \vec{\beta}^{n-1} \in \alpha^{<\omega}$, let
\begin{align*}
T^{\alpha}_{(\phi, \vec{\beta}^0, \cdots , \vec{\beta}^{n-1})} = \{ (x_0, \cdots , x_{n-1}) \in (\lambda^{\omega})^n \mid (\phi , \vec{\beta}^0 , \cdots , \vec{\beta}^{n-1} , x_0, \cdots, x_{n-1} ) \in T^{\alpha} \}
\end{align*}
\end{enumerate}
\end{definition}

\begin{lemma}\label{lem:induction}
There is a function which is OD from $\mu$ sending $(\alpha ,p)$ to $q^{\alpha}_p = (\psi , S)$, where $\alpha$ is an ordinal and $p = (\phi , \vec{\beta}^0, \cdots , \vec{\beta}^{n-1})$ is as in Definition~\ref{key definition}, such that $T^{\alpha}_p$ is definable from $q^{\alpha}_p$.
\end{lemma}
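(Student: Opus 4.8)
The plan is to construct the function $(\alpha, p) \mapsto q^{\alpha}_p$ by transfinite recursion on $\alpha$, and within each $\alpha$ by a secondary recursion on the logical complexity of the formula $\phi$ in $p = (\phi, \vec{\beta}^0, \cdots, \vec{\beta}^{n-1})$. Here ``$T^{\alpha}_p$ is definable from $q^{\alpha}_p = (\psi, S)$'' is read in the sense of ($\dagger$), i.e. $T^{\alpha}_p = B_{(\psi, S)}$: for all $\vec{x} \in (\lambda^{\omega})^n$, $\vec{x} \in T^{\alpha}_p \iff \mathrm{L}[S, \vec{x}] \vDash \psi[S, \vec{x}]$. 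The recursion is legitimate in $\textsf{ZF} + \textsf{DC}$, and $\textsf{DC}$ is available by Theorem~\ref{thm:choice}. The whole point is that each clause is carried out by a procedure that is $\mathrm{OD}$ from $\mu$, the only appeal to $\mu$ entering through the function $F$ of Lemma~\ref{lem:key lemma}; granting this, the resulting total function is $\mathrm{OD}$ from $\mu$.

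For the inner recursion, first handle atomic formulas. Using Remark~\ref{rmk:key}, each name $\pi_{\alpha}(\vec{\beta}^i, x_i)$ with $\vec{\beta}^i = (\beta^i_0, \beta^i_1, \ldots)$ denotes a subset of $\mathrm{L}_{\beta^i_0}(\lambda^{\omega})$ definable over $\bigl(\mathrm{L}_{\beta^i_0}(\lambda^{\omega}), \in\bigr)$ by the formula coded by $\beta^i_1$, with parameters that are again $\pi$-names at level $\beta^i_0 < \alpha$. Unfolding these definitions turns an atomic assertion ``$\pi_{\alpha}(\vec{\beta}^i, x_i) \in \pi_{\alpha}(\vec{\beta}^j, x_j)$'' (and likewise for $=$) into a Boolean combination of satisfaction statements about structures $\mathrm{L}_{\beta}(\lambda^{\omega})$ with $\beta < \alpha$; by the coherence $\pi_{\beta} = \pi_{\alpha} \upharpoonright \beta^{<\omega} \times \lambda^{\omega}$, these are exactly membership statements in the relations $T^{\beta}$, $\beta < \alpha$, whose slices are already assigned pairs by the outer induction hypothesis. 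The Boolean connectives are then immediate: if $\phi$ is $\neg \phi_0$ or $\phi_0 \wedge \phi_1$, then since sets defined from pairs are closed under complement and intersection (over a common set of ordinals $S_0 \oplus S_1$, inside which $\mathrm{L}[S_0 \oplus S_1, \vec{x}]$ computes both defining conditions), we read off $q^{\alpha}_p$ from the pairs attached to $\phi_0$ and $\phi_1$.

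The essential case is the existential quantifier, and this is where Lemma~\ref{lem:key lemma} does the work. If $\phi = \exists v\, \phi_0$, with $v$ ranging over $\mathrm{L}_{\alpha}(\lambda^{\omega})$, we pull the quantifier back along the surjection $\pi_{\alpha}$ and split it into an ordinal quantifier and a real quantifier: $\vec{x} \in T^{\alpha}_{(\phi, \vec{\beta}^0, \cdots, \vec{\beta}^{n-1})}$ iff there are $\vec{\gamma} \in \alpha^{<\omega}$ and $y \in \lambda^{\omega}$ with $\pi_{\alpha}(\vec{\gamma}, y)$ defined and $\mathrm{L}_{\alpha}(\lambda^{\omega}) \vDash \phi_0[\pi_{\alpha}(\vec{\beta}^0, x_0), \ldots, \pi_{\alpha}(\vec{\beta}^{n-1}, x_{n-1}), \pi_{\alpha}(\vec{\gamma}, y)]$. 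For each fixed $\vec{\gamma}$, the inner matrix defines the slice $T^{\alpha}_{(\phi_0, \vec{\beta}^0, \cdots, \vec{\beta}^{n-1}, \vec{\gamma})} \subseteq (\lambda^{\omega})^{n+1}$, arranged so that the extra (last) coordinate is $y$; the inner induction hypothesis attaches a pair to it, and applying $F$ from Lemma~\ref{lem:key lemma} yields a pair defining the projection $\mathrm{p}A_{\vec{\gamma}} \subseteq (\lambda^{\omega})^n$ that removes the $y$-coordinate, with $F$ being $\mathrm{OD}$ from $\mu$. Finally $T^{\alpha}_{(\phi, \vec{\beta}^0, \cdots, \vec{\beta}^{n-1})} = \bigcup_{\vec{\gamma} \in \alpha^{<\omega}} \mathrm{p}A_{\vec{\gamma}}$. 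Since the assignment $\vec{\gamma} \mapsto F(\cdots) = (\psi_{\vec{\gamma}}, S_{\vec{\gamma}})$ is uniform and $\mathrm{OD}$ from $\mu$, we collect the sets of ordinals it produces into a single $S^{\ast}$ coding $\vec{\gamma} \mapsto S_{\vec{\gamma}}$, and let $\psi^{\ast}$ assert ``there is $\vec{\gamma} \in \alpha^{<\omega}$, decoded from $S^{\ast}$, with $\mathrm{L}[S_{\vec{\gamma}}, \vec{x}] \vDash \psi_{\vec{\gamma}}[S_{\vec{\gamma}}, \vec{x}]$'', a legitimate first-order condition over $\mathrm{L}[S^{\ast}, \vec{x}]$ since that model has access to every $S_{\vec{\gamma}}$. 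Setting $q^{\alpha}_p = (\psi^{\ast}, S^{\ast})$ completes the clause.

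The main obstacle is concentrated in this last case and is twofold. First, one must carry out the cross-level bookkeeping of the atomic reduction carefully, tracking which $\pi$-name lives at which level and verifying that unfolding definable membership genuinely lands in the $T^{\beta}$ for $\beta < \alpha$; this is routine given Remark~\ref{rmk:key} but is precisely where the coherence conditions on the $\pi_{\alpha}$ are indispensable. Second, and more delicate, is encoding the $\alpha^{<\omega}$-indexed union into a single pair $(\psi^{\ast}, S^{\ast})$ \emph{while preserving $\mathrm{OD}$-from-$\mu$ definability of the whole assignment}: one must ensure there is no hidden choice in selecting the $S_{\vec{\gamma}}$ and that $S^{\ast}$ is built $\mathrm{OD}$ from $\mu$ uniformly in $(\alpha, p)$, so that the transfinite recursion defining $(\alpha, p) \mapsto q^{\alpha}_p$ is itself $\mathrm{OD}$ from $\mu$. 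The projection step, which would otherwise be the crux, is here entirely absorbed by $F$.
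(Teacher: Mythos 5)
Your proposal is correct and takes essentially the same approach as the paper: the same double induction (outer on $\alpha$, inner on the complexity of $\phi$), with the atomic case reduced via Remark~\ref{rmk:key} to the relations $T^{\beta}$ for $\beta < \alpha$, the Boolean cases handled syntactically, and the existential case split into an ordinal quantifier over $\alpha^{<\omega}$ (absorbed into a single code by an indexed union) and a real quantifier over $\lambda^{\omega}$ (absorbed by $F$ from Lemma~\ref{lem:key lemma}). The only cosmetic difference is order of operations in the existential case --- the paper forms the union over $\vec{\beta} \in \alpha^{<\omega}$ first and applies $F$ once, while you apply $F$ to each $\vec{\gamma}$ and then take the union --- which is immaterial since projection commutes with unions.
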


\begin{proof}[Lemma~\ref{lem:induction}]

We prove the lemma by induction on $\alpha$. Let us fix $\alpha$. Then we prove the statement by induction on the complexity of $\phi$.

{\bf Case 1:} When $\phi$ is of the form $v \in w$ or $v =w$.

Let $\beta_* = \text{max} \{ \beta^0_0 , \beta^1_0 \} $. Then $\beta^* < \alpha$ and by Remark~\ref{rmk:key}, both $\pi_{\alpha} (\vec{\beta}^0, x_0)$ and $\pi_{\alpha} (\vec{\beta}^1 , x_1)$ are definable in the structure $\bigl(\mathrm{L}_{\beta_*} (\lambda^{\omega}), \in \bigr)$ with some parameters of the form $\pi_{\beta_*} (\vec{\gamma}, y)$ where $\vec{\gamma}$ here depends only on $\vec{\beta}^0$ and $\vec{\beta}^1$. Then one can find a formula $\phi'$ and some $\vec{\gamma}^0, \vec{\gamma}^1$ such that $T^{\alpha}_{\phi , \vec{\beta}^0, \vec{\beta}^1} = T^{\beta_*}_{\phi', \vec{\gamma}^0, \vec{\gamma}^1}$. By induction hypothesis, one can find a desired $q^{\alpha}_p$.

{\bf Case 2:} When $\phi$ is of the form $\neg \, \phi'$.

In this case, by induction hypothesis, letting $p' = (\phi', \vec{\beta}^0, \cdots , \vec{\beta}^{n-1})$, we have $q^{\alpha}_{p'} = (\psi , \vec{\gamma}^0 , \cdots , \vec{\gamma}^{n-1})$. Then $q^{\alpha}_p =  (\neg \, \psi , \vec{\gamma}^0 , \cdots , \vec{\gamma}^{n-1})$ is the desired one.

{\bf Case 3:} When $\phi$ is of the form $\phi_1 \wedge \phi_2$.

In this case, by induction hypothesis, letting $p_1 = (\phi_1, \vec{\beta}^0, \cdots , \vec{\beta}^{n-1})$ and $p_2 = (\phi_2, \vec{\beta}^0, \cdots , \vec{\beta}^{n-1})$, we have $q^{\alpha}_{p_1} = (\psi_1 , \vec{\gamma}^0, \cdots , \vec{\gamma}^{n-1})$ and $q^{\alpha}_{p_2} = (\psi_2 , \vec{\gamma}^0, \cdots , \vec{\gamma}^{n-1})$. Then $q^{\alpha}_p =  (\psi_1 \wedge \psi_2 , \vec{\gamma}^0 , \cdots , \vec{\gamma}^{n-1})$ is the desired one.

{\bf Case 4:} When $\phi$ is of the form $\exists v \, \phi'$.

In this case, by induction hypothesis, for each $\vec{\beta} \in \alpha^{<\omega}$, setting $p_{\vec{\beta}} = (\phi' ,\vec{\beta}, \vec{\beta}^0 , \cdots , \vec{\beta}^{n-1})$, we have $q^{\alpha}_{p_{\vec{\beta}}}$. We write $q_{\vec{\beta}}$ for $q^{\alpha}_{p_{\vec{\beta}}}$. Note that
\begin{align*}
& \ T^{\alpha}_{\phi , \vec{\beta}^0, \cdots , \vec{\beta}^{n-1}} \\
= & \{ (x_0 , \cdots x_{n-1} ) \mid \bigl(\exists y \in \mathrm{L}_{\alpha}(\lambda^{\omega})\bigr) \\
& \hspace{3cm} \mathrm{L}_{\alpha} (\lambda^{\omega}) \vDash \lq\lq \phi' [y , \pi_{\alpha}(\vec{\beta}^0, x_0) , \cdots , \pi_{\alpha} (\vec{\beta}^{n-1} , x_{n-1})]"\}\\
= & \{ (x_0, \cdots x_{n-1} ) \mid (\exists \vec{\beta} \in \alpha^{<\omega}) \ (\exists x \in \lambda^{\omega}) \\
& \hspace{3cm} \mathrm{L}_{\alpha} \vDash \lq\lq \phi' [ \pi_{\alpha} (\vec{\beta}, x), \pi_{\alpha}(\vec{\beta}^0, x_0) , \cdots , \pi_{\alpha} (\vec{\beta}^{n-1} , x_{n-1})]"\}\\
 = &  \bigcup_{x \in \lambda^{\omega}} \bigcup_{\vec{\beta} \in \alpha^{<\omega}} T^{\alpha}_{\phi' ,\vec{\beta}, \vec{\beta}^0, \cdots , \vec{\beta}^{n-1}}\\
= &  \bigcup_{x \in \lambda^{\omega}} \bigcup_{\vec{\beta} \in \alpha^{<\omega}}  B_{q_{\vec{\beta}}}\\
= & \bigcup_{x \in \lambda^{\omega}} B_{\bigvee_{\vec{\beta} \in \alpha^{<\omega}} q_{\vec{\beta}}}\\
= & B_{F\bigl( \bigvee_{\vec{\beta} \in \alpha^{<\omega}} q_{\vec{\beta}} \bigr)},
\end{align*}
where $B_{q_{\vec{\beta}}}$ is the subset of $\lambda^{\omega}$ defined from $q_{\vec{\beta}}$, $\bigvee_{\vec{\beta} \in \alpha^{<\omega}} q_{\vec{\beta}}$ is the pair $(\psi , S)$ defining the union $\bigcup_{\vec{\beta} \in \alpha^{<\omega} } B_{q_{\vec{\beta}}}$, and $F$ is from Lemma~\ref{lem:key lemma}. Therefore, $q^{\alpha}_p = F\bigl( \bigvee_{\vec{\beta} \in \alpha^{<\omega}} q_{\vec{\beta}} \bigr)$ is the desired one. This completes the proof of the lemma. 
\qed
\end{proof}

We are now ready to finish the proof of Theorem~\ref{thm:infty-Borel}.
\begin{proof}[Theorem~\ref{thm:infty-Borel}]

As in the paragraph after Fact~\ref{fact:infty-Borel-char}, it is enough to prove the following:
($\ast$) For any subset $A$ of $\lambda^{\omega}$ in $\mathrm{L}(\lambda^{\omega})$, there are some formula $\phi$ and a set $S$ of ordinals such that for all elements $x$ of $\lambda^{\omega}$, 
\begin{align}
x \in A \iff \mathrm{L}[S, x] \vDash \phi [S,x].\tag{$\dagger$}
\end{align}

Let $A$ be a subset of $\lambda^{\omega}$ in $\mathrm{L}(\lambda^{\omega})$. Then there is an ordinal $\alpha$ such that $A \in \mathrm{L}_{\alpha +1 } (\lambda^{\omega}) \setminus \mathrm{L}_{\alpha} (\lambda^{\omega})$. Let $\phi$ be a formula defining $A$ in the structure $\bigl(\mathrm{L}_{\alpha} (\lambda^{\omega}), \in \bigr)$ with some parameters $\pi_{\alpha}(\vec{\beta}^0 , x_0), \cdots , \pi_{\alpha} (\vec{\beta}^{n-1} , x_{n-1})$. Let $\vec{\beta} \in \alpha^{<\omega}$ be such that $\pi_{\alpha} (\vec{\beta} , x) = x$ for all $x \in \lambda^{\omega}$. Then 
\begin{align*}
A = & \{ x  \mid \mathrm{L}_{\alpha} (\lambda^{\omega}) \vDash \lq\lq \phi [\pi_{\alpha}(\vec{\beta}, x), \pi_{\alpha}(\vec{\beta}^0 , x_0), \cdots , \pi_{\alpha} (\vec{\beta}^{n-1} , x_{n-1})]" \} \\
 = & \{ x  \mid (x, x_0, \cdots , x_{n-1} ) \in T^{\alpha}_{\vec{\beta}, \vec{\beta}^0, \cdots , \vec{\beta}^{n-1}} \}\\
= & \{ x \mid (x, x_0 , \cdots , x_{n-1}) \in B_{q^{\alpha}_p}\},
\end{align*}
where $p = (\phi , \vec{\beta}, \vec{\beta}^0, \cdots , \vec{\beta}^{n-1})$ and $q^{\alpha}_p$ is from Lemma~\ref{lem:induction}. This shows that $A$ is defined from $q^{\alpha}_p$ with parameters $x_0, \cdots , x_{n-1}$, which easily gives us that $A$ is defined from $(\phi' , S)$ for some $\phi'$ and $S$. This finishes the proof of $(\ast)$, and hence this completes the proof of Theorem~\ref{thm:infty-Borel}.
\qed
\end{proof}

\section{Weak homogeneity and supercompactness of $\omega_1$}\label{sec:weakhomo}
In this section, we prove Theorem \ref{thm:weak_homo}. Recall the terminology about trees from Section \ref{sec:basic}. A tree $T$ is said to be on $\kappa$ if $T\subset (\omega\times \kappa)^{<\omega}$. For a tree $T$ on $\kappa$, for $s\in {}^{<\omega}\omega$, let $T_s = \{t \in {}^{lh(s)}\kappa : (s,t)\in T\}$. Let also $p[T] = \{f\in {}^{\omega}\kappa : \exists x \forall n (x\restriction n, f\restriction n)\in T \}$. Every tree $T$ considered in the following will be on $\kappa$ for some $\kappa$. 

Following \cite{martin2008weakly}, we define what it means for a tree $T$ on $\kappa$ to be \textbf{weakly homogeneous}. First, for $n<\omega$, $\kappa$ an infinite cardinal, $\lambda$ a nonzero ordinal, let MEAS$^{\kappa,\lambda}_n$ be the set of all $\kappa$-complete measures on ${}^n\lambda$. For $m<n <\omega$, for $X\subseteq \lambda^m$, let $ext_n(X) = \{t \in \lambda^n : t\restriction m\in X \}$. A \textbf{$\lambda$-tower of measures} is a sequence $(\mu_n : n<\omega)$ such that
\begin{enumerate}[(i)]
\item for each $n$, $\mu_n \in $ MEAS$^{\omega_1,\lambda}_n$, and
\item for $m<n$, $\mu_m = proj_m(\mu_n)$, where $proj_m(\mu_n) = \{X\subseteq \lambda^m : ext_n(X)\in \mu_n\}$.
\end{enumerate}

A tower $(\mu_n : n<\omega)$ is \textbf{countably complete} if for every sequence $(X_n : n<\omega)$ such that $X_n\in \mu_n$ for all $n<\omega$, there is a function $f:\omega \rightarrow \lambda$ such that $f\restriction n\in X_n$ for all $n$.

\begin{definition}\label{def:weak_homo}
Let $T$ be a tree on $\lambda$. $T$ is \textbf{weakly homogeneous} if there is a sequence $(M_s : s\in {}^{<\omega}\omega)$ such that
\begin{enumerate}[(i)]
\item for each $s$, $M_s$ is a countable subset of MEAS$^{\omega_1,\lambda}_{lh(s)}$ and for each $\mu \in M_s$, $T_s\in \mu_s$.
\item for all $x\in p[T]$, there is a countably complete $\lambda$-tower of measures $(\mu_n : n<\omega)$ such that for each $n$, $\mu_n \in M_{x\restriction n}$. 
\end{enumerate}
\end{definition}

\begin{proof}[Theorem \ref{thm:weak_homo}]
Let $T$ be a tree on $\lambda$. \cite{martin2008weakly} shows that $T$ is weakly homogeneous provided the following conditions hold:
\begin{enumerate}[(A)]
\item There is a countably complete, normal fine measure on $\mathcal{P}_{\omega_1}(\bigcup_n (\mathcal{P}({}^n\lambda)\cup \rm{MEAS}$$^{\omega_1,\lambda}_n))$.
\item The Axiom of Dependent Choice holds for relations on $\mathcal{P}(\lambda)$.
\item There is a wellorder on $\bigcup_n \rm{MEAS}$$^{\omega_1,\lambda}_n$.
\end{enumerate}

We need to verify (A), (B), (C) follow from the supercompactness of $\omega_1$. (A) is obvious. (B) follows from Theorem \ref{thm:choice}. Now we verify (C). Let $X = \bigcup_n \rm{MEAS}$$^{\omega_1,\lambda}_n$. We need to show that $X$ is wellorderable.

It is enough to prove that MEAS$^{\omega_1,\lambda}_1$ is well-orderable. This is because for each $n > 1$, there is a bijection from MEAS$^{\omega_1,\lambda}_1$ onto MEAS$^{\omega_1,\lambda}_n$. Such a bijection is induced by a bijection between $\lambda$ and $\lambda^n$. Hence, MEAS$^{\omega_1,\lambda}_n$ is well-orderable. By $\sf{DC}$, $X$ is well-orderable. 

Let  $Z = \mathcal{P}(\lambda)\cup \rm{MEAS}$$^{\omega_1,\lambda}_1$. Let $U$ be a countably complete, normal fine measure on $\mathcal{P}_{\omega_1}(Z)$. Given $\mu \in $MEAS$^{\omega_1,\lambda}_1$ and $\sigma \in \mathcal{P}_{\omega_1}(Z)$, let 
\begin{center}
$f_\mu(\sigma) = min \bigcap (\sigma\cap \mu)$.
\end{center}
So $f_\mu$ is a function from $\mathcal{P}_{\omega_1}(Z)$ into the ordinals. 

\begin{claim}
Suppose $\mu\neq \nu$ are in MEAS$^{\omega_1,\lambda}_1$. Then $\forall^*_\sigma U \ f_\mu(\sigma) \neq f_\nu(\sigma)$, here ``$\forall^*_\sigma U \varphi(\sigma)$" abbreviates the statement ``the set of $\sigma$ such that $\varphi(\sigma)$ is in $U$".
\end{claim}
\begin{proof}
Let $A$ witness $\mu\neq \nu$. Without loss of generality, assume $A\in \mu$ and $\neg A \in \nu$. By fineness of $U$, $\forall^*_U \sigma$, $\{A,\neg A\}\subset \sigma$. Fix such a $\sigma$. Then $f_\mu(\sigma) \in A$ and $f_\nu(\sigma) \in \neg A$. Since $A,\neg A$ are disjoint, $f_\mu(\sigma)\neq f_\nu(\sigma).$ \qed
\end{proof}
Let $\pi: X \rightarrow \prod_{\sigma\in \mathcal{P}_{\omega_1}(Z)} Ord \slash U$ be defined as: $\pi(\mu) = [f_\mu]_U$. The claim gives us that $\pi$ is an injection. By $\sf{DC}$, $\prod_{\sigma\in \mathcal{P}_{\omega_1}(Z)} Ord \slash U$ is well-founded and furthermore is well-ordered. Therefore, $X$ is well-ordered as desired. \qed
\end{proof}

\section{$\sf{AD}^+$, $\sf{AD}_{\mathbb{R}}$, and supercompactness of $\omega_1$}\label{sec:equiv}

In this section, we prove Theorem \ref{thm:equiv}. We need the following important notion of an \textbf{envelope of a pointclass}. This was first formulated by D.A. Martin (cf. \cite{jackson2010structural}). We only need the notion of an envelope of an inductive-like pointclass.

\begin{definition}\label{def:envelope}
A pointclass $\Gamma$ is \textbf{inductive-like} if it is closed under continuous reductions and real quantifiers. \footnote{Here and below, $\Gamma$ is always a boldface pointclass.}

Let $\Gamma$ be an inductive-like pointclass. Let $\check{\Gamma}$ be the pointclass consisting of all $A$ such that $\neg A\in \Gamma$. Let $\Delta_\Gamma = \Gamma\cap \check{\Gamma}$. \textbf{The envelope of $\Gamma$}, $Env(\Gamma)$, is the pointclass consisting of all $A$ such that for any countable $\sigma\subset \mathbb{R}$, there is some $A'\in \Delta_\Gamma$ such that $A\cap \sigma = A'\cap \sigma$.
\end{definition}

The following fact about envelopes is crucial for our argument.
It is essentially proved in 
\cite{wilson2012contributions} (which deals with generic large cardinal properties of $\omega_1$ in $\mathsf{ZFC}$ rather than with large cardinal properties of $\omega_1$, but the argument carries over to the present context.)

\begin{lemma}[Wilson]\label{lemma:scale-from-strong-compactness}
 Assume $\mathsf{ZF} + \mathsf{DC}$. Let $\Gamma$ be an inductive-like pointclass with the scale property.
 Suppose that $\omega_1$ is $Env(\Gamma)$-strongly compact.
 Then there is a scale on a  universal $\check{\Gamma}$ set, each of whose prewellorderings is in $Env(\Gamma)$.
\end{lemma}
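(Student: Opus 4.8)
The plan is to build the scale on a universal $\check{\Gamma}$ set by a Skolem‑hull / ultrapower argument driven by the $Env(\Gamma)$‑strong compactness of $\omega_1$. First I would fix a fine measure $\mu$ on $\mathcal{P}_{\omega_1}(Env(\Gamma))$, using that $\omega_1$ is $Env(\Gamma)$‑strongly compact. The key idea, following Wilson, is that since $\Gamma$ has the scale property there is a scale $\vec{\varphi} = \langle \varphi_n : n < \omega\rangle$ on a universal $\Gamma$ set, and hence by taking complements a $\check\Gamma$‑scale, but \emph{a priori} its prewellorderings live in $\Gamma$ or $\check\Gamma$, not in the smaller pointclass $Env(\Gamma)$. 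The whole point is to manufacture a \emph{new} scale whose norms are each individually in $Env(\Gamma) = \Delta_\Gamma$‑approximable. So I would let $U$ be a fixed universal $\check\Gamma$ set and aim to define norms $\psi_n$ on $U$ that agree, on every countable $\sigma \subseteq \mathbb{R}$, with norms coming from some $\Delta_\Gamma$ set — which is exactly the membership condition for $Env(\Gamma)$.

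Second, I would carry out the averaging construction. For each countable $\sigma \in \mathcal{P}_{\omega_1}(\mathbb{R})$ (note $\mathbb{R} \subseteq Env(\Gamma)$ after coding, or one restricts $\mu$ appropriately), the restriction $\vec{\varphi}\restriction\sigma$ of the given scale to $\sigma$ can be approximated by prewellorderings coming from a $\Delta_\Gamma$ set, by the defining property of the envelope. The plan is to form, for each $n$, the ordinal value $\psi_n(x) = [\langle \sigma \mapsto \varphi_n^\sigma(x)\rangle]_\mu$, i.e.\ to average the local norms through the ultrapower by $\mu$; fineness of $\mu$ guarantees that for any fixed finite set of reals, $\mu$‑almost every $\sigma$ contains them, so the averaged norm is well defined and refines the pointwise information. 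Here I would invoke $\mathsf{DC}$ (available from Theorem~\ref{thm:choice} in the full‑supercompactness setting, and assumed outright in the hypotheses of the lemma) to get that the ultrapower $\prod_\sigma \mathrm{Ord}/\mu$ is well‑founded, so the $\psi_n$ genuinely take ordinal values and can be collapsed to honest norms.

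Third, I would verify the two defining properties of a scale for $\langle \psi_n : n<\omega\rangle$ on $U$: the limit/convergence property (if $x_i \to x$ with each $x_i \in U$ and each $\psi_n(x_i)$ eventually constant, then $x \in U$ and $\psi_n(x) \le \liminf_i \psi_n(x_i)$) and, crucially, that each $\psi_n \in Env(\Gamma)$. The lower‑semicontinuity follows from the corresponding property of the original scale $\vec\varphi$ holding $\mu$‑almost everywhere together with countable completeness of the measure. That each prewellordering $\le_{\psi_n}$ is in $Env(\Gamma)$ is checked directly from the definition of the envelope: given a countable $\sigma$, the relation $\psi_n(x) \le \psi_n(y)$ restricted to $\sigma$ is computed inside $\mu$‑almost all hulls from a $\Delta_\Gamma$ approximation, producing the required $A' \in \Delta_\Gamma$ agreeing with $\le_{\psi_n}$ on $\sigma$.

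I expect the main obstacle to be precisely this last point — showing that the averaged prewellorderings land in $Env(\Gamma)$ rather than merely in $\Gamma$ — and, relatedly, checking that the ultrapower averaging is \emph{coherent} across different $n$ so that what results is a single scale on one fixed universal $\check\Gamma$ set rather than a sequence of unrelated norms. The delicate interplay is that the \L o\'s‑style computation must be uniform enough that the $\Delta_\Gamma$ approximations on different countable $\sigma$ cohere through $\mu$; this is where Wilson's argument (\cite{wilson2012contributions}), transposed from the generic‑large‑cardinal $\mathsf{ZFC}$ setting to the genuine measure on $\mathcal{P}_{\omega_1}(Env(\Gamma))$ supplied by strong compactness, does the real work, and I would follow it closely, using fineness to handle finite subsets of reals and countable completeness plus $\mathsf{DC}$ to handle the $\omega$‑sequences arising in the scale property.
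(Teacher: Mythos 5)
The paper does not actually prove this lemma --- it cites Wilson's thesis and notes that the argument transfers from the generic-embedding setting --- so the comparison here is with Wilson's argument. Your proposal fails at its very first step: you claim that since $\Gamma$ has the scale property, a scale on a universal $\Gamma$ set gives, ``by taking complements,'' a scale on a universal $\check{\Gamma}$ set. Scales do not pass to complements. The existence of \emph{any} scale on a universal $\check{\Gamma}$ set is precisely the conclusion of the lemma, not a starting point: for instance, in $\mathrm{L}(\mathbb{R})$ under $\mathsf{AD}+\mathsf{DC}$, the pointclass $\Gamma = \Sigma^2_1$ is inductive-like and has the scale property, yet the universal $\check{\Gamma}$ set is not Suslin there and so carries no scale whatsoever. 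Relatedly, $Env(\Gamma)$ is not a ``smaller'' pointclass: as the paper notes immediately after the lemma, $\Gamma \cup \check{\Gamma} \subseteq Env(\Gamma)$. So the task is not to push the prewellorderings of an already-existing scale down into the envelope; it is to construct norms on a set that a priori has none. As written, your proposal presupposes the conclusion.

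This error propagates into the averaging construction, and one can see the scheme cannot be repaired as stated because it proves too much. The local norms $\varphi_n^\sigma$ you integrate are restrictions (or $\Delta_\Gamma$-approximations) of the $\Gamma$-scale $\vec{\varphi}$, which lives on the universal $\Gamma$ set; hence the ultrapower norms $\psi_n(x) = [\sigma \mapsto \varphi_n^\sigma(x)]_\mu$ are norms on that $\Gamma$ set (which needed no new scale), not on your universal $\check{\Gamma}$ set $U$. Moreover, your argument only ever uses $\mathsf{DC}$, the scale property of $\Gamma$, and a fine measure on $\mathcal{P}_{\omega_1}(\mathbb{R})$ (you explicitly project $\mu$ there). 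All of these hold in $\mathrm{L}(\mathbb{R})$ under $\mathsf{AD}+\mathsf{DC}$ --- Martin's cone measure makes $\omega_1$ $\mathbb{R}$-strongly compact --- yet the conclusion fails there; so any correct proof must make essential use of the fine measure on $\mathcal{P}_{\omega_1}\bigl(Env(\Gamma)\bigr)$, i.e., on countable families of \emph{sets of reals}, which your construction never does. That is exactly what Wilson's argument exploits: one first shows $\check{\Gamma} \subseteq Env(\Gamma)$, so that for each countable $\sigma \subseteq \mathbb{R}$ there is a $\Delta_\Gamma$ set $U_\sigma$ with $U \cap \sigma = U_\sigma \cap \sigma$; each $U_\sigma$, being in $\Gamma$, carries a $\Gamma$-scale by the scale property; and one integrates \emph{these} local scales over $\mu$, using fineness on $\mathcal{P}_{\omega_1}\bigl(Env(\Gamma)\bigr)$ to arrange that $\mu$-almost every $\sigma$ contains the approximating sets together with the prewellorderings of their scales. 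It is this coherence that makes the limit property and the $\Delta_\Gamma$-approximability (i.e., membership in $Env(\Gamma)$) of the resulting norms verifiable. Your outline never produces the $\Delta_\Gamma$ approximations to $U$, and without them there is nothing relevant to average.
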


In the above, $Env(\Gamma)$ is a self-dual pointclass (closed under complementation) containing $\Gamma$ (cf. \cite{jackson2010structural} and \cite{wilson2012contributions}). So $\Gamma\cup \check{\Gamma}\subset Env(\Gamma)$. Another important fact about envelopes is that if $\mathsf{ZF} + \mathsf{DC}_\mathbb{R}$ holds and the boldface ambiguous part $\Delta_\Gamma$ of the pointclass $\Gamma$ is determined, as it is here, then $Env(\Gamma)$ is determined and projectively closed (Wilson \cite{wilson2012contributions}; based on work of Kechris, Woodin, and Martin.)
Therefore Wadge's lemma applies to it, as one can easily verify that the relevant games are determined.
 Moreover, the Wadge preordering\footnote{We are abusing notation here; really it is a preordering of pairs $\{B,\neg B\}$ where $B \in Env(\Gamma)$.} of $Env(\Gamma)$ is a prewellordering:
 otherwise by $\mathsf{DC}_\mathbb{R}$ we could choose a sequence of pointsets in $Env(\Gamma)$ that was strictly decreasing in the Wadge ordering, but then by the proof of the Martin--Monk theorem we get a contradiction.
 (Again one can easily verify that the relevant games are determined.)

Note that the prewellorderings of a scale as in Lemma \ref{lemma:scale-from-strong-compactness} must be Wadge-cofinal in $Env(\Gamma)$; otherwise the sequence of prewellorderings itself would be coded by a set of reals in $Env(\Gamma)$, which is impossible.

The following fundamental fact about $\sf{AD}^+$ is due to H.W. Woodin (cf. \cite{ketchersid2011more}).
\begin{theorem}[Woodin]\label{thm:ADSuslin}
The following are equivalent.
\begin{enumerate}
\item $\textsf{AD}^+$.
\item $\textsf{AD} + $ the class of Suslin cardinals is closed.
\end{enumerate}
\end{theorem}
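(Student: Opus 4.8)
The plan is to prove the two implications separately, unpacking $\textsf{AD}^+$ as the conjunction of $\textsf{DC}_{\mathbb{R}}$, Ordinal Determinacy, and ``every subset of $2^\omega$ is $\infty$-Borel''. Since Ordinal Determinacy implies $\textsf{AD}$, the implication $(1)\Rightarrow(2)$ reduces to showing the Suslin cardinals are closed below $\Theta$, while $(2)\Rightarrow(1)$ — where $\textsf{AD}$ is given — reduces to establishing the remaining clauses of $\textsf{AD}^+$, namely Ordinal Determinacy and that every set of reals is $\infty$-Borel. Throughout I may assume $\textsf{DC}_{\mathbb{R}}$: it is part of $\textsf{AD}^+$ for $(1)\Rightarrow(2)$, and in the intended application (where $\omega_1$ is supercompact) it is available for the converse by Theorem~\ref{thm:choice}.

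For $(1)\Rightarrow(2)$ the key tool is Woodin's tree-production analysis. Granting that every set is $\infty$-Borel and that the associated games on ordinals are determined, from an $\infty$-Borel code $(T,\rho)$ for a set $A$ together with a winning strategy in the corresponding ordinal game one reads off a tree $R$ with $A=\mathrm{p}[R]$ whose ordinal height is controlled by the ordinals appearing in the code. I would then fix a limit $\lambda<\Theta$ of Suslin cardinals and apply this to a set universal for the pointclass $\bigcup_{\gamma<\lambda}S(\gamma)$ of $\gamma$-Suslin sets with $\gamma<\lambda$: such a set is $\infty$-Borel via a code of size $\lambda$, hence $\lambda$-Suslin by tree production, while it is not $\gamma$-Suslin for any $\gamma<\lambda$ by the cofinality of the Suslin cardinals below $\lambda$. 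This exhibits $\lambda$ as a Suslin cardinal and yields closure; the genuinely delicate instances are the limits of uncountable cofinality, where the $\infty$-Borel clause of $\textsf{AD}^+$ is doing real work beyond what bare $\textsf{AD}$ provides.

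For $(2)\Rightarrow(1)$, assuming $\textsf{AD}$ (with $\textsf{DC}_{\mathbb{R}}$) and closure of the Suslin cardinals, the substantive task is to show every subset of $2^\omega$ is $\infty$-Borel; Ordinal Determinacy then follows from the usual scale-theoretic arguments. Here I would proceed by recursion up the Wadge hierarchy: each set sits at some Wadge level, every Suslin set is $\infty$-Borel, and the closure hypothesis guarantees that at limit stages the scaled pointclasses cofinal below the level supply $\infty$-Borel codes that can be amalgamated with no gap obstructing the construction. Combining these codes through the Boolean operations and projection (the latter again via tree production, now run in reverse) produces an $\infty$-Borel code for the given set.

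The main obstacle is the tree-production mechanism itself — Woodin's conversion, under Ordinal Determinacy, between $\infty$-Borel codes and Suslin trees with precise control of the associated ordinals — since this is exactly the bridge between the $\infty$-Borel clause of $\textsf{AD}^+$ and Suslinity. A secondary difficulty is the case analysis at limit Suslin cardinals according to their cofinality and to whether the associated scaled pointclass closes under $\exists^{\mathbb{R}}$ or $\forall^{\mathbb{R}}$, and, in the converse direction, verifying that the closure hypothesis genuinely prevents a Wadge-level gap at which the inductive construction of $\infty$-Borel codes could break down. Given the depth of these ingredients (all due to Woodin), a complete treatment would lean on the established structure theory of Suslin cardinals under $\textsf{AD}$ rather than reprove it from scratch.
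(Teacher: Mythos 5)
The first thing to say is that the paper does not prove this theorem at all: it is stated as a black box due to Woodin, with a pointer to \cite{ketchersid2011more}, and is then simply invoked in the proof of Theorem~\ref{thm:equiv}. So there is no in-paper argument for your sketch to match; the only question is whether your outline could stand on its own as a proof. It cannot. It is a reasonable map of where the difficulties lie, but every load-bearing step is deferred to exactly the results that constitute the theorem, and in one place the deferral hides a circularity.

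Concretely: in the direction $(1)\Rightarrow(2)$, your claim that a universal set for $\bigcup_{\gamma<\lambda} S(\gamma)$ ``is $\infty$-Borel via a code of size $\lambda$'' is asserted, not proved --- each $\gamma$-Suslin section is $\infty$-Borel via its tree (by absoluteness of wellfoundedness), but amalgamating $\lambda$-many such codes into a single code of the right size requires uniformly selecting codes for the sections (a Coding Lemma argument), and the subsequent ``tree production'' conversion of that code into a $\lambda$-Suslin representation is not a known tool you may cite to prove Woodin's theorem; it \emph{is} Woodin's theorem. In the direction $(2)\Rightarrow(1)$, your Wadge-hierarchy induction handles projections ``via tree production, now run in reverse,'' but the direction of tree production that produces Suslin representations from $\infty$-Borel codes requires Ordinal Determinacy, which in this implication is part of the conclusion and is postponed to ``the usual scale-theoretic arguments'' --- arguments which are themselves a major theorem here, not a routine afterthought; the determinacy-free conversion (Suslin to $\infty$-Borel) does not help with projections of non-Suslin $\infty$-Borel sets, so the induction as described has no engine at precisely the critical step. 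Finally, assuming $\textsf{DC}_{\mathbb{R}}$ in the converse direction by borrowing it from the intended application (Theorem~\ref{thm:choice}) changes the statement being proved: as written, (2) does not include $\textsf{DC}_{\mathbb{R}}$ while (1) does, so it must either be derived or the theorem restated. Given that all the substance lives in unpublished work of Woodin reported in \cite{ketchersid2011more}, the defensible move --- and the one the paper makes --- is to cite the result rather than present this outline as a proof.
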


We will also need the following results due to D.A. Martin and Woodin.
\begin{theorem}\label{thm:ADRSuslin}
Assume $\textsf{ZF + DC}$. The following are equivalent.
\begin{enumerate}
\item $\textsf{AD}_{\mathbb{R}}$.
\item $\textsf{AD}^+ + $ every set is Suslin.
\end{enumerate}
\end{theorem}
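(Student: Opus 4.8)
The plan is to prove the two implications separately, in each case routing the determinacy of real games through Suslin representations of the payoff. Throughout I work in $\textsf{ZF} + \textsf{DC}$, so $\textsf{DC}_{\mathbb{R}}$ is available for free, and I freely identify $\mathbb{R}^{\omega}$ with $\mathbb{R}$ via a fixed homeomorphism, so that the payoff set of a length-$\omega$ real game may be regarded as a subset of $\mathbb{R}$.

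For the direction $(2) \Rightarrow (1)$, assume $\textsf{AD}^+$ and that every set of reals is Suslin. Since the class of sets of reals is closed under complementation, every set is then \emph{both} Suslin and co-Suslin. Given a real game with payoff $A \subseteq \mathbb{R}^{\omega}$, I would fix trees $S$ and $T$ on $\omega \times \kappa$ with $A = \mathrm{p}[S]$ and $\mathbb{R}\setminus A = \mathrm{p}[T]$ (regarding $A$ as a subset of $\mathbb{R}$ via the identification above), and then run Martin's unraveling: in the auxiliary game the players make their usual real moves while cooperatively building an ordinal branch witnessing membership of the emerging play in $\mathrm{p}[S]$ or in $\mathrm{p}[T]$. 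The auxiliary payoff is closed, hence the auxiliary game is determined, and projecting a winning strategy back down produces a winning strategy in the original real game; this is exactly $\textsf{AD}_{\mathbb{R}}$. Here $\textsf{DC}$ is used for the well-foundedness comparisons between $S$ and $T$ and for reducing closed determinacy of the auxiliary game to Gale--Stewart.

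For the direction $(1) \Rightarrow (2)$, assume $\textsf{AD}_{\mathbb{R}}$; then $\textsf{AD}$ is immediate. The substantive step is to show that every subset of $\mathbb{R}$ is Suslin, which I would obtain from the scale property: given $A \subseteq \mathbb{R}$, I would build a scale on $A$ by the Moschovakis norm construction adapted to the real-game quantifier (the third-periodicity apparatus for games with real moves), feeding on the determinacy of the relevant auxiliary real games that $\textsf{AD}_{\mathbb{R}}$ supplies, and using $\textsf{DC}$ to glue the norms into a scale; a scale yields a tree projecting to $A$. It then remains to verify the three clauses of $\textsf{AD}^+$. The clause $\textsf{DC}_{\mathbb{R}}$ holds by hypothesis. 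Every set of reals is $\infty$-Borel because every Suslin set is: if $A = \mathrm{p}[S]$, then by absoluteness of well-foundedness $x \in A$ iff $\mathrm{L}[S,x]$ thinks the section tree $S_x$ is illfounded, which is the form required by Fact~\ref{fact:infty-Borel-char}. Finally, Ordinal Determinacy holds: for $\lambda < \Theta$, a set $A \subseteq \omega^{\omega}$, and continuous $\pi \colon \lambda^{\omega} \to \omega^{\omega}$, the sets $\pi^{-1}(A)$ and its complement are continuous preimages of Suslin sets, hence Suslin, and Suslin co-Suslin games whose moves are ordinals below $\lambda$ are determined by Martin's unraveling (no extra choice is needed, since the move set is well-orderable). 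Thus $\textsf{AD}^+$ holds, and we already have that every set is Suslin.

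I expect the scale construction in $(1) \Rightarrow (2)$ to be the main obstacle: extracting a scale on an arbitrary set of reals from $\textsf{AD}_{\mathbb{R}}$ alone is not a direct combinatorial argument but requires the full periodicity machinery for real games, together with care that the norms are chosen uniformly enough to be assembled using only $\textsf{DC}$. A secondary delicate point lies in $(2) \Rightarrow (1)$: the auxiliary game has moves in $\mathbb{R}$, which is not well-orderable, so the passage from the canonical winning quasi-strategy for the closed player to an actual winning strategy must be justified, and it is precisely here that the Suslin structure of the payoff together with $\textsf{DC}$ is used.
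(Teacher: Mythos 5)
First, a point of comparison: the paper does not prove this statement at all --- it is quoted as a known theorem of Martin and Woodin and used as a black box in the proof of Theorem \ref{thm:equiv} --- so your proposal has to stand on its own, and it does not. The central device in both of your directions is the transfer principle ``if the payoff is Suslin and co-Suslin, the game is determined, by Martin's unraveling of the trees $S$ and $T$.'' This principle is false, even for integer games and even in $\mathsf{ZFC}$: in $\mathrm{L}$ every $\Sigma^1_2$ set, in particular every $\Pi^1_1$ set, is Suslin and co-Suslin by Shoenfield, the move set $\omega$ is well-orderable, and yet $\Pi^1_1$-determinacy fails in $\mathrm{L}$ (by Harrington, analytic determinacy implies $0^{\sharp}$ exists). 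Bare tree representations do not unravel; Martin's argument needs \emph{homogeneously} Suslin trees, i.e.\ towers of measures attached to the tree, and manufacturing such measures from $\textsf{AD}^+$ + ``every set is Suslin'' is precisely the hard content of the theorem you are trying to prove --- your sketch never uses the determinacy hypotheses except through closure properties, which is why it ``proves'' too much. The auxiliary game itself is also incoherent as described: the players are adversaries, with I trying to force the play into $\mathrm{p}[S]$ and II into $\mathrm{p}[T]$, so they cannot ``cooperatively'' build a branch through one of the two trees. Finally, for real moves the Gale--Stewart argument only yields a winning \emph{quasi-strategy} for the closed player in $\mathsf{ZF}+\mathsf{DC}$, and a win for the open player in the auxiliary game transfers to the original game only via homogeneity measures. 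These are not loose ends to be ``justified''; they are the theorem (Martin's actual proof that Suslin-co-Suslin real games are determined runs through third periodicity to get definable strategies in approximating integer games, glued together with $\mathsf{DC}$).

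The direction $(1)\Rightarrow(2)$ has a second, independent gap. Third periodicity for real games (Martin's theorem that the real-game quantifier propagates scales) produces a scale on the result only when the \emph{inner} payoff already admits a scale; to conclude that an arbitrary $A \subseteq \mathbb{R}$ is Suslin you must first represent $A$ as the value of the real-game quantifier applied to a scaled (say, closed) set, and you never supply such a representation --- nor is one obvious, since relativizing the classical normal-form and scale constructions to a set-of-reals parameter is exactly what breaks down (this is why ``every set is Suslin'' is a nontrivial hypothesis in the first place). The known proofs go instead through uniformization together with the Wadge/largest-Suslin-cardinal analysis, or through Woodin's route: $\textsf{AD}_{\mathbb{R}}$ gives that every set is $\infty$-Borel (by a Vop\v{e}nka-style argument much like the one in Section~\ref{sec:Chang model} of this paper) and gives Solovay's normal fine measure on $\mathcal{P}_{\omega_1}\mathbb{R}$, and the measure converts $\infty$-Borel codes into Suslin representations. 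Relatedly, your verification of Ordinal Determinacy relies on the same invalid unraveling principle; the efficient repair for the $\textsf{AD}^+$ clause is not to verify Ordinal Determinacy by hand but to combine ``every set is Suslin'' with the closure of the Suslin cardinals below $\Theta$ and invoke the paper's Theorem \ref{thm:ADSuslin} --- though that still leaves you needing the genuinely deep inputs (Martin's real-game determinacy theorem for one direction, and one of the routes above for the other) that your sketch was meant to replace.
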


\begin{proof}[Theorem \ref{thm:equiv}]
First, note that by Theorem \ref{thm:choice}, $\sf{DC}$ follows from supercompactness of $\omega_1$. The $(\Leftarrow)$ direction follows immediately from Theorem \ref{thm:ADRSuslin}. For the $(\Rightarrow)$ direction, suppose $\sf{AD}_\mathbb{R}$ fails. By Theorems \ref{thm:ADRSuslin} and \ref{thm:ADSuslin}, there is the largest Suslin cardinal $\kappa < \Theta$. Let $\Gamma = S(\kappa)$ be the pointclass of $\kappa$-Suslin sets, i.e. the set of $A$ such that there is a tree $T$ on $\kappa$ such that $A = p[T]$. Then by the basic theory of pointclasses under determinacy, $\Gamma$ is inductive-like, has the scale property, and furthermore, there is some $B\in \check{\Gamma}$ that is not Suslin (equivalently, there is no scale on $B$). Such a $B$ can be taken to be a universal $\check{\Gamma}$-set. The reason $B$ cannot have a scale is because otherwise, $B$ is Suslin. Since $\kappa$ is the largest Suslin cardinal, $B$ is $\kappa$-Suslin (i.e. there is a tree $T$ on $\kappa$ such that $B = p[T]$). But this means $B\in \Gamma$, which cannot happen.

By Lemma \ref{lemma:scale-from-strong-compactness}, $\sf{ZF+DC} + \omega_1$ is supercompact implies that every set in $\check{\Gamma}$ has a scale, each of whose prewellorderings is in $Env(\Gamma)$.\footnote{In fact, \cite[Section 5]{trang2016determinacy} just needs $\omega_1$ is $Env(\Gamma)$-strongly compact for this.} This is a contradiction.\footnote{An alternative proof for the $(\Rightarrow)$ direction is as follows. Suppose $\sf{AD}_\mathbb{R}$ fails. Let $\kappa<\Theta$ be the largest Suslin cardinal and $\Gamma = S(\kappa)$. Let $A$ be a universal $\Gamma$-set. Then by results of Section \ref{sec:weakhomo}, $A$ is weakly homogeneously Suslin. By the Martin-Solovay construction, $\neg A$ is Suslin. But $\neg A \in \check{\Gamma}\backslash \Gamma$. This contradicts the fact that $\Gamma$ is the largest Suslin pointclass.} \qed

\end{proof}

The following may be a more approachable version of a well-known conjecture that $\sf{AD}$ is equivalent to $\sf{AD}^+$.

\begin{conjecture}\label{conj:AD}
Assume $\omega_1$ is supercompact. $\sf{AD}$ is equivalent to $\sf{AD}^+$.
\end{conjecture}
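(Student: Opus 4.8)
Since $\sf{AD}^+$ includes Ordinal Determinacy, the implication $\sf{AD}^+ \Rightarrow \sf{AD}$ is immediate, so the plan is to establish the reverse implication $\sf{AD} \Rightarrow \sf{AD}^+$ under the hypothesis that $\omega_1$ is supercompact. By Theorem \ref{thm:ADSuslin}, under $\sf{AD}$ this reduces to a single structural statement: \emph{the class of Suslin cardinals is closed below $\Theta$.} Thus the entire problem becomes showing that, assuming $\sf{AD}$ together with ``$\omega_1$ is supercompact'', there is no limit $\lambda < \Theta$ of Suslin cardinals that itself fails to be a Suslin cardinal.

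The first thing I would extract from supercompactness is that the pointclass of Suslin sets is self-dual. Indeed, by Theorem \ref{thm:weak_homo} every tree is weakly homogeneous, so every Suslin set $A = \text{p}[T]$ is weakly homogeneously Suslin; by the Martin--Solovay construction its complement is then homogeneously Suslin, and in particular Suslin. Hence the pointclass $S$ of all Suslin sets is closed under complements, and (being always closed under $\exists^{\mathbb{R}}$) under both real quantifiers. This self-duality is the structural leverage unavailable in the pure $\sf{AD}$ setting, and it is exactly the feature that, in the alternative proof of Theorem \ref{thm:equiv}, rules out a \emph{largest} Suslin cardinal: the complement of a universal $S(\kappa)$-set would again be Suslin, contradicting that $S(\kappa)$ is the largest Suslin pointclass.

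The heart of the plan is to push this argument up from the ``largest Suslin cardinal'' case to an arbitrary failure of closure. Suppose the Suslin cardinals are not closed and fix a counterexample $\lambda$. I would isolate the inductive-like pointclass $\Gamma$ living at this gap (the appropriate Steel pointclass of $<\lambda$-Suslin sets carrying the scale property) and apply Wilson's Lemma \ref{lemma:scale-from-strong-compactness}: since $\omega_1$ is supercompact it is in particular $Env(\Gamma)$-strongly compact, so there is a scale on a universal $\check{\Gamma}$-set whose prewellorderings lie in $Env(\Gamma)$. As in the proof of Theorem \ref{thm:equiv}, such a scale furnishes a Suslin representation of a set that the failure of closure forbids from being Suslin, which is the desired contradiction; hence the Suslin cardinals are closed and $\sf{AD}^+$ follows. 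Via Theorem \ref{thm:equiv} one then even obtains $\sf{AD}_{\mathbb{R}}$, consistent with the observation above that self-duality of $S$ already excludes a largest Suslin cardinal.

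The main obstacle is precisely the step of identifying the correct inductive-like $\Gamma$ with the scale property at the gap, and verifying the hypotheses of Wilson's lemma there, when the gap is a limit of Suslin cardinals of cofinality $\omega$. In the proof of Theorem \ref{thm:equiv} the ambient assumption $\sf{AD}^+$ supplies the full Steel--Jackson analysis of Suslin cardinals and scales at the top of the Suslin hierarchy; here that analysis is exactly what one is trying to establish, so the argument threatens to be circular. Breaking this circularity --- proving under $\sf{AD}$ alone that the self-duality of $S$ forces scales to propagate through each limit of Suslin cardinals, so that no such gap can exist --- is the genuine mathematical content of the statement and the reason it is recorded here only as a conjecture.
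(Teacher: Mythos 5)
You are attempting to prove a statement that the paper records as Conjecture \ref{conj:AD}: the paper contains no proof of it, and your proposal does not supply one either --- as you yourself concede in your final paragraph. What you have written is a correct and well-organized \emph{reduction} of the conjecture to the open problem it encodes, not an argument resolving it. The sound parts are these: $\textsf{AD}^+ \Rightarrow \textsf{AD}$ is immediate; by Woodin's Theorem \ref{thm:ADSuslin} (with $\textsf{DC}$ available from Theorem \ref{thm:choice}) the converse reduces to closure of the Suslin cardinals below $\Theta$; and the self-duality of the Suslin pointclass, obtained from Theorem \ref{thm:weak_homo} together with the Martin--Solovay construction, is legitimate --- it is precisely the footnoted alternative proof of Theorem \ref{thm:equiv} --- and it does rule out a \emph{largest} Suslin cardinal.

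The genuine gap is the step you flag at the end, and it is worth being concrete about why it is not a mere verification. To invoke Wilson's Lemma \ref{lemma:scale-from-strong-compactness} at a putative gap $\lambda$ (a limit of Suslin cardinals that is not itself Suslin), you need two things that bare $\textsf{AD}$ is not known to provide. First, you need an inductive-like pointclass $\Gamma$ sitting at $\lambda$ \emph{with the scale property}: in the proof of Theorem \ref{thm:equiv} this comes from ``the basic theory of pointclasses under determinacy,'' i.e., the structural analysis of Suslin cardinals and scales, which is carried out under the ambient hypothesis $\textsf{AD}^+$ of that proof (equivalently, under closure of the Suslin cardinals) --- exactly the conclusion you are trying to establish. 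Second, even granting the lemma's output, you must convert ``the universal $\check{\Gamma}$-set carries a scale with prewellorderings in $Env(\Gamma)$'' into a contradiction with the failure of closure at $\lambda$, which requires identifying $\check{\Gamma}$ against the hierarchy of pointclasses $S(\kappa)$ for $\kappa < \lambda$ --- an identification available only through the same analysis. Note also that your self-duality observation, while correct, does not by itself propagate scales: knowing that complements of Suslin sets are Suslin says nothing about the scale property at limits of Suslin cardinals of cofinality $\omega$, which is where the known propagation arguments need hypotheses beyond $\textsf{AD}$ or already presuppose the closure being proved. So your proposal in effect re-derives the content of Theorem \ref{thm:equiv} and correctly locates the obstruction, but the conjecture remains exactly as open after your argument as before it.
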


\section{$\sf{HPC}$ and supercompactness of $\omega_1$}\label{sec:HPC}

In this section, we will prove Theorem \ref{thm:det}. First, we note that we do not need the full ``$\omega_1$ is supercompact" hypothesis in the proof of the theorem; one just needs:
\begin{itemize}
\item $\omega_1$ is $\mathbb{R}$-supercompact, and
\item $\neg \square_{\omega_1}$.
\end{itemize}
Both of these are consequences of $\omega_1$ is supercompact, cf\cite[Section 1]{trang2016determinacy}.

Now we explain \textit{Hod Pair Capturing} ($\sf{HPC}$). This hypothesis and the notion of \textit{least branch hod pair} (lbr hod pair) are formulated by John Steel. The reader can see \cite{normalization_comparison} for a detailed discussion regarding topics concerning least-branch hod premice, lbr hod pairs, and $\sf{HPC}$. The main thing one needs from $\sf{HPC}$ are the facts given by Theorem \ref{thm:lbrScS}. For basic facts about inner model theoretic notions such as iteration strategies, see \cite{steel2010outline}.
\begin{definition}[lbr hod pair, \cite{normalization_comparison}]
$(\mathcal{P},\Sigma)$ is an lbr hod pair if $\mathcal{P}$ is an lpm (least-branch hod premouse) and $\Sigma$ is a complete strategy for $\mathcal{P}$ that is normalizes well and has strong hull condensation.
\end{definition}

\begin{definition}[$\sf{HPC}$, \cite{normalization_comparison}]
Suppose $A$ is Suslin co-Suslin. Then there is an lbr hod pair $(\mathcal{P},\Sigma)$ such that $A$ is Wadge reducible to $Code(\Sigma)$.
\end{definition}

It is conjectured that $\sf{AD}^+$ implies $\sf{HPC}$. $\sf{HPC}$ and its variations have been shown to hold in very strong models of determinacy, cf. \cite{hod_mice_LSA}.

In the above, a complete strategy acts on all countable stacks of countable normal trees. The reader can consult \cite{normalization_comparison} for more details on lbr hod pairs. The basic theory of lbr hod pairs has been worked out in \cite{normalization_comparison}. What we need are a couple of facts about them. In the following, we fix a canonical coding $Code$ of subsets of $HC$ by subsets of $\mathbb{R}$.\footnote{One way of defining $Code$ is as follows. Let $\pi: \mathbb{R}\rightarrow HC$ be a surjection defined as: for any $x$ that codes a well-founded relation $E_x$ on $\omega$, let $\pi(x)$ be the transitive collapge of the structure $(\omega,E_x)$. Then for any $A\subseteq HC$, $Code(A)$ is defined to be $\pi^{-1}[A]$.} Given an lbr hod pair $(\mathcal{P},\Sigma)$, for $n < \omega$, $\mathcal{M}_n^{\Sigma,\sharp}$ is the minimal, active $\Sigma$-mouse that has $n$ Woodin cardinals. See for instance \cite{scales_hybrid_mice} for a precise definition. The following facts are relevant for us. 

\begin{lemma}\label{lem:term}

Let $(\mathcal{P},\Sigma)$ be an lbr hod pair. Let $M = \mathcal{M}_n^{\Sigma,\sharp}$ and $\Lambda$ be $M$'s canonical strategy. Let $\lambda$ be the largest Woodin cardinal of $M$. There is a term $\tau_\Sigma\in M^{Coll(\omega,\lambda)}$ such that whenever $i:M\rightarrow N$ is an iteration embedding via an iteration according to $\lambda$, and $g\subseteq Coll(\omega, i(\lambda))$ is $N$-generic, then
\begin{center}$[i(\tau_\Sigma)]_g = Code(\Sigma)\cap N[g]$.\end{center}

\end{lemma}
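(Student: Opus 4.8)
The plan is to define $\tau_\Sigma$ as the canonical term relation capturing $Code(\Sigma)$ at the largest Woodin cardinal $\lambda$ of $M$, exploiting the fact that $M = \mathcal{M}_n^{\Sigma,\sharp}$ is a $\Sigma$-mouse and therefore internally computes $\Sigma$ along all of its own countable stacks via its strategy predicate. The starting point is that, since $\lambda$ is Woodin in $M$, the extender algebra at $\lambda$ lets one make any real $x$ of any set-generic extension of an iterate $\Lambda$-generic over a further iterate by a genericity iteration according to $\Lambda$. Thus, working in a $Coll(\omega,\lambda)$-extension in which $\lambda$ has become countable, one can decode a generic real, run the genericity iteration internally, and read off whether that real belongs to $Code(\Sigma)$ from the iterate's own copy of the strategy.

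Concretely, first I would set, inside $M$,
\begin{align*}
\tau_\Sigma = \bigl\{ (p, \dot{x}) : \ & p \Vdash^{M}_{Coll(\omega,\lambda)} \text{``the } \Lambda\text{-genericity iteration making } \dot{x}\\
& \text{ generic computes } \dot{x} \in Code(\Sigma)\text{''} \bigr\}.
\end{align*}
Because $M$ is a $\Sigma$-mouse, the relation ``the relevant iterate computes $\dot x \in Code(\Sigma)$'' is definable over $M$ from its strategy predicate, so $\tau_\Sigma$ is a genuine element of $M^{Coll(\omega,\lambda)}$. The second step is to verify the conclusion: given an iteration embedding $i : M \to N$ via $\Lambda$ and an $N$-generic $g \subseteq Coll(\omega, i(\lambda))$, elementarity gives that $i(\tau_\Sigma)$ is the corresponding term computed in $N$ at $i(\lambda)$, and one checks $[i(\tau_\Sigma)]_g = Code(\Sigma) \cap N[g]$ by a Woodinness-plus-homogeneity argument: for each real $x \in N[g]$ one runs a $\Lambda$-genericity iteration of $N$ at $i(\lambda)$ making $x$ generic over the resulting iterate, and reads off $x$'s membership in $Code(\Sigma)$ from the iterate's internal strategy. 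Independence of the answer from the particular iteration and generic follows from the homogeneity of $Coll(\omega, i(\lambda))$ together with the comparability of any two such genericity iterations.

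The heart of the matter — and the step I expect to be the main obstacle — is \emph{correctness}: showing that the strategy value computed internally by the iterate agrees with the true external $\Sigma$, i.e.\ that the iterate's verdict ``$x \in Code(\Sigma)$'' really reflects $x \in Code(\Sigma)$ in $V$. This is exactly where the two hull-type hypotheses packaged into ``$(\mathcal{P},\Sigma)$ is an lbr hod pair'' are used: strong hull condensation guarantees that the pullback of $\Sigma$ under the genericity-iteration and copy maps is again $\Sigma$, so no spurious branches are introduced, and the fact that $\Sigma$ normalizes well ensures the computation is stable under the embedding normalization implicit in the genericity iteration. Combined with the minimality, hence fullness, of $M = \mathcal{M}_n^{\Sigma,\sharp}$, which prevents the iterate from lacking relevant strategy information, these yield that the internal and external computations of $\Sigma$ coincide, completing the verification that $[i(\tau_\Sigma)]_g = Code(\Sigma) \cap N[g]$.
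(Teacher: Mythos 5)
The paper itself offers no proof of this lemma; it is quoted as part of the standard theory of lbr hod pairs, and the standard argument runs through Steel's \emph{generic interpretability} theorem (\cite{normalization_comparison}): a $\Sigma$-premouse carries an internal strategy predicate, and for an lbr hod pair this predicate extends canonically and \emph{correctly} to all set-generic extensions of the mouse and of its iterates. Measured against that, your proposal has a genuine gap, in fact two related ones. First, the term you write down is not a legitimate element of $M^{Coll(\omega,\lambda)}$: its defining condition asks $p$ to force a statement about ``the $\Lambda$-genericity iteration making $\dot{x}$ generic,'' but $\Lambda$ is the \emph{external} canonical strategy of $M$, and neither $M$ nor $M[g]$ can define it — a mouse cannot in general iterate itself at its own Woodin cardinal, since building a genericity iteration requires choosing cofinal branches at limit stages, which is exactly what the external strategy supplies. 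So ``run the genericity iteration internally'' is not an operation available to $M^{Coll(\omega,\lambda)}$, and the displayed definition of $\tau_\Sigma$ does not live inside $M$.

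Second, even granting the iteration, the detour is circular: after making $x$ generic over a further iterate $N'$, ``reading off $x$'s membership in $Code(\Sigma)$ from the iterate's internal strategy'' presupposes that $N'$'s strategy predicate extends to $N'[h]$ and that this extension agrees with the true $\Sigma$ on the new reals — which is precisely the generic interpretability fact, and if you have it you can apply it directly to $N[g]$ (where $x$ already lives, with $i(\lambda)$ countable) and the genericity iteration becomes superfluous. Your final paragraph correctly senses that correctness is the crux and that ``normalizes well'' plus ``strong hull condensation'' are the hypotheses doing the work; but they are used to prove generic interpretability of the internal strategy predicate, not to control pullbacks along genericity-iteration copy maps. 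The repaired argument is: let $\dot{\Sigma}^M$ be $M$'s strategy predicate; by generic interpretability it determines $\Sigma \cap M[g]$ definably over $M[g]$ for any $Coll(\omega,\lambda)$-generic $g$; define $\tau_\Sigma$ from that definition; and for an arbitrary $\Lambda$-iterate $i : M \to N$, note that $N$ is again a $\Sigma$-mouse whose predicate interprets as $\Sigma$ (this is part of $\Lambda$ being the canonical strategy), so elementarity of $i$ plus the same theorem applied to $N$ yields $[i(\tau_\Sigma)]_g = Code(\Sigma) \cap N[g]$. Minimality of $\mathcal{M}_n^{\Sigma,\sharp}$ plays no role in this correctness step.
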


\begin{theorem}\label{thm:lbrScS}
Suppose $\omega_1$ is $\mathbb{R}$-supercompact, $\neg \square_{\omega_1}$. Suppose $(\mathcal{P},\Sigma)$ is an lbr hod pair. Then 
\begin{enumerate}
\item (\cite[Section 2]{mouse_pairs}) $Code(\Sigma)$ is Suslin co-Suslin.
\item (\cite[Section 3]{trang2016determinacy}) $\mathcal{M}_2^{\Sigma,\sharp}$ exists.
\end{enumerate}
\end{theorem}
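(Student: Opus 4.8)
The plan is to establish (2) first and then derive (1) from it together with the weak homogeneity result of Section~\ref{sec:weakhomo}. For (2) I would argue by contradiction in the style of the core model induction: assuming $\mathcal{M}_2^{\Sigma,\sharp}$ does not exist, build the $\Sigma$-relativized core model $K^\Sigma$ below two Woodin cardinals and extract a square sequence on $\omega_1$ from it to contradict $\neg\square_{\omega_1}$. For (1) I would use Lemma~\ref{lem:term} to produce a tree projecting to $Code(\Sigma)$, observe that it is weakly homogeneous by Theorem~\ref{thm:weak_homo}, and then apply the Martin--Solovay construction to get a tree projecting to the complement.

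For (2): suppose toward a contradiction that $\mathcal{M}_2^{\Sigma,\sharp}$ does not exist. The $\mathbb{R}$-supercompactness of $\omega_1$ supplies a normal fine measure on $\mathcal{P}_{\omega_1}(\mathbb{R})$, and this measure (together with the resulting \textsf{DC} from Theorem~\ref{thm:choice}) is what lets one run the $K^c$-construction relative to $\Sigma$ inside \textsf{ZF}: it provides uniform background certificates over $HC$ and powers the comparison and absoluteness arguments that ordinarily require the Axiom of Choice. Under the failure of $\mathcal{M}_2^{\Sigma,\sharp}$ this construction stabilizes to a core model $K^\Sigma$ lying below two Woodin cardinals, which is rigid and satisfies weak covering (again invoking the measure to obtain covering in the choiceless setting). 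By the relativized Schimmerling--Zeman theorem, $K^\Sigma \models \square_\kappa$ for every $\kappa$ below its top; in particular $K^\Sigma \models \square_{\omega_1}$, and since weak covering guarantees that $K^\Sigma$ computes the successor of $\omega_1$ correctly, the $K^\Sigma$-square sequence is a genuine $\square_{\omega_1}$-sequence in $V$, contradicting $\neg\square_{\omega_1}$. Hence $\mathcal{M}_2^{\Sigma,\sharp}$ exists.

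For (1): granting (2), let $M = \mathcal{M}_1^{\Sigma,\sharp}$ (which exists as $\mathcal{M}_1^{\Sigma,\sharp}$ is an initial segment of $\mathcal{M}_2^{\Sigma,\sharp}$), with largest Woodin $\delta$, canonical strategy $\Lambda$, and term $\tau_\Sigma \in M^{\text{Coll}(\omega,\delta)}$ from Lemma~\ref{lem:term}. I would define a tree $T$ that searches, on input a real $x$, for a $\Lambda$-iteration $i\colon M \to N$ making $x$ generic over $N$ for $\text{Coll}(\omega,i(\delta))$ together with a condition $p$ forcing $\check{x} \in i(\tau_\Sigma)$; by the term-capturing property of Lemma~\ref{lem:term} and Woodin's genericity iterations, $x \in Code(\Sigma)$ iff $x \in p[T]$, so $Code(\Sigma)$ is Suslin. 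By Theorem~\ref{thm:weak_homo} the tree $T$ is weakly homogeneous, and since \textsf{DC} holds by Theorem~\ref{thm:choice}, the Martin--Solovay construction applied to the countably complete towers witnessing weak homogeneity of $T$ yields a tree $\mathrm{MS}(T)$ with $p[\mathrm{MS}(T)] = \neg\, p[T] = \neg\, Code(\Sigma)$. Thus $\neg\, Code(\Sigma)$ is Suslin as well, so $Code(\Sigma)$ is Suslin co-Suslin. (Alternatively, co-Suslinity can be read off directly from $\mathcal{M}_2^{\Sigma,\sharp}$, whose second Woodin captures $\neg\, Code(\Sigma)$ by the analogous term-relation construction one level up.)

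The main obstacle lies entirely in (2): carrying out the relativized core model theory---existence, rigidity, and especially weak covering for $K^\Sigma$, together with the passage from the $K^\Sigma$-square sequence to one in $V$---without the Axiom of Choice, relying only on the measure furnished by $\mathbb{R}$-supercompactness to substitute for the usual choice-based comparison and covering arguments. Granting that machinery, the square reflection in (2) and the Martin--Solovay step in (1) are routine.
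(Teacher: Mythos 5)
A point of context first: the paper does not prove Theorem~\ref{thm:lbrScS} itself; both parts are quoted from the literature, (1) from \cite[Section~2]{mouse_pairs} and (2) from \cite[Section~3]{trang2016determinacy}, with the remark following the theorem indicating how the hypotheses are used. Your outline of (2) does match the cited argument in shape: if $\mathcal{M}_2^{\Sigma,\sharp}$ fails to exist, the dichotomy gives the $\Sigma$-relativized core model $K^{\Sigma}$, Schimmerling--Zeman square holds there, and a weak-covering argument (run via the measure on $\mathcal{P}_{\omega_1}(\mathbb{R})$, since the Axiom of Choice fails) transfers a $\square_{\omega_1}$-sequence to $V$, contradicting $\neg\square_{\omega_1}$. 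As the paper's remark notes, only fineness of the measure ($\mathbb{R}$-strong compactness) is needed here, not normality; and you rightly flag that the choiceless covering machinery is where all the real work lies.

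Your proof of (1), however, has two genuine gaps. First, it invokes Theorem~\ref{thm:weak_homo}, but that theorem is proved from \emph{full} supercompactness of $\omega_1$: condition (A) in its proof requires a normal fine measure on $\mathcal{P}_{\omega_1}\bigl(\bigcup_n (\mathcal{P}({}^n\lambda)\cup \mathrm{MEAS}^{\omega_1,\lambda}_n)\bigr)$, where $\lambda$ is the (possibly very large) ordinal your tree lives on. Theorem~\ref{thm:lbrScS} assumes only $\mathbb{R}$-supercompactness, so the weak-homogeneity and Martin--Solovay steps are simply not available under the stated hypotheses. Second, and more fundamentally, the tree $T$ you describe is not a tree on ordinals: a node of $T$ must certify that a candidate genericity iteration is \emph{according to} $\Lambda$, and $\Lambda$ is the $Q$-structure-guided strategy of a $\Sigma$-mouse, so verifying membership in $\Lambda$ requires $Code(\Sigma)$ (or a Suslin representation of it) --- precisely the object being constructed; the same circularity afflicts your parenthetical alternative using the second Woodin of $\mathcal{M}_2^{\Sigma,\sharp}$. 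What Lemma~\ref{lem:term}, genericity iterations, and Neeman's theorem actually deliver is \emph{determinacy} of sets reducible to $Code(\Sigma)$ --- this is exactly how the paper proves Theorem~\ref{thm:det} --- not Suslin representations. The cited proof of (1) proceeds along entirely different lines, and this is the missing idea: per the paper's remark, $\mathbb{R}$-supercompactness is used to extend $\Sigma$ to act on all stacks $\vec{\mathcal{W}}$ admitting a surjection from $\mathbb{R}$, and then the comparison and strong-hull-condensation arguments of \cite[Section~2]{mouse_pairs} yield the Suslin and co-Suslin representations of $Code(\Sigma)$ directly. In particular, (1) does not route through (2) at all, so your derivation of (1) from (2) imposes a dependence that the actual proofs do not have.
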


\begin{remark}
We note that in the above theorem, the hypothesis $\omega_1$ is $\mathbb{R}$-supercompact is used in (1) to extend $\Sigma$ to act on all stacks $\vec{\mathcal{W}}$ such that there is a surjection of $\mathbb{R}$ onto $\vec{\mathcal{W}}$. The proof of (2) just needs $\omega_1$ is $\mathbb{R}$-strongly compact and $\neg\square_{\omega_1}$.
\end{remark}

The following theorem, due to Neeman, is our main tool for proving determinacy.

\begin{theorem}[Neeman, \cite{neeman1995optimal}]\label{thm:termCap}
Suppose $A\subseteq \mathbb{R}$. Suppose $(M,\Lambda, \delta)$ is such that 
\begin{enumerate}
\item $M \vDash \delta$ is Woodin;
\item $\Lambda$ is an $\omega_1+1$-iteration strategy for $M$;
\item there is a term $\tau \in M^{Coll(\omega,\delta)}$ such that whenever $i: M \rightarrow N$ is an iteration map according to $\Lambda$, $g\subseteq Coll(\omega,i(\delta))$ is $N$-generic, then $i(\tau)_g = A\cap N[g]$.
\end{enumerate}
Then $A$ is determined.
\end{theorem}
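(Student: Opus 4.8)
The plan is to run Neeman's auxiliary-game argument, exploiting the Woodin cardinal $\delta$ of $M$ to manufacture generics over iterates of $M$ and using the term $\tau$ to convert the iterate's internal opinion into the truth about $A$. The first ingredient is the extender algebra $\mathbb{B}_\delta \in M$ at $\delta$, together with the genericity iteration theorem: since $\delta$ is Woodin in $M$ and $\Lambda$ is defined on all iterations of countable length, for every real $x$ there is an iteration $i \colon M \to N$ according to $\Lambda$, of countable length with critical points increasing, such that $x$ is $i(\mathbb{B}_\delta)$-generic over $N$. Taking $M$ countable without loss of generality (pass to a Skolem hull, which preserves the term condition on $\tau$), $N$ is countable and $i(\delta)$ is countable in $V$, so the generic is absorbed into the collapse: there is $g \subseteq \mathrm{Coll}(\omega, i(\delta))$ that is $N$-generic with $x \in N[g]$. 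Hypothesis (3) then gives $i(\tau)_g = A \cap N[g]$, whence $x \in A \iff x \in i(\tau)_g$. Thus the abstract term condition lets an iterate of $M$ correctly decide membership in $A$ for any real we feed it.

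With this in hand I would define an auxiliary game $G^{*}$ that interleaves the integer moves building a run $x$ of the Gale--Stewart game $G_A$ with the construction of such a genericity iteration, modelled on Martin's proof of analytic determinacy (with the Woodin $\delta$ playing the role of Martin's measurable). One player — say I — is charged with producing a \emph{witness} that the term will place $x$ in $A$: alongside $x$ this player builds a legitimate iteration (extenders chosen to satisfy the least violated extender-algebra axiom, branches supplied by the fixed $\Lambda$ at limits) together with a descending sequence of collapse conditions, cohering under the iteration embeddings, each forcing both ``$\check{x}\restriction k \subseteq \dot{x}_{\mathrm{gen}}$'' and ``$\dot{x}_{\mathrm{gen}} \in \tau$''. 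The decisive feature is that maintaining such a witness forever is a \emph{closed} condition: any failure — illfoundedness, rejection by $\Lambda$, or a condition failing to force the term statement — is witnessed at a finite stage. Hence $G^{*}$ is determined outright by the Gale--Stewart theorem for closed games (the moves range over $\omega$ and the wellorderable model $M$, so only \textsf{DC}, available by Theorem~\ref{thm:choice}, is used, together with \textsf{DC} to run the countable-length iteration and check wellfoundedness of its direct limit). This is the crux of the reduction: the possibly non-determined payoff $A$ is replaced by a closed iteration-and-forcing witness whose determinacy is free.

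It then remains to transfer a win in $G^{*}$ to a win in $G_A$. If I wins $G^{*}$, the maintained witness yields in the limit a generic $g$ with $x \in N[g]$ and $x \in i(\tau)_g = A \cap N[g]$, so $x \in A$; reading off the integer moves from I's $G^{*}$-strategy gives a winning $G_A$-strategy for I. If instead II wins $G^{*}$, then II prevents I from ever assembling such a witness, and I must argue that this forces $x \notin A$: for the real $x$ being produced there always exists, by the genericity iteration theorem, a legitimate iteration making $x$ generic, and were $x \in A$ the term would put $x$ into the iterate, so some condition would force ``$\dot{x}_{\mathrm{gen}} \in \tau$'' and the witness would have been available — contradiction. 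Hence II's $G^{*}$-strategy induces a winning $G_A$-strategy for II. Since $G^{*}$ is determined, one of the players has a winning $G^{*}$-strategy, and in either case $G_A$ is determined, as required.

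The step I expect to be the main obstacle is the II-side transfer. Producing the winning $G_A$-strategy for the player who wins the \emph{open} side of $G^{*}$ is subtle: one must show that blocking every witness really forces $x \notin A$ uniformly, even though the opponent controls half of $x$ and the iteration must be built adaptively. This is where the Woodin cardinal does its essential work — the iteration must absorb the opponent's moves so that the real can always be ``caught up'' and made generic against the strategy being tested — and it is the direct analogue of integrating strategies against a measure in Martin's argument, now carried out with genericity iterations. A secondary but genuine difficulty is the on-the-fly bookkeeping that schedules integer moves against extender applications so that by the end $x$ is truly $i(\mathbb{B}_\delta)$-generic over a wellfounded model, with the countably many limit stages refereed by $\Lambda$; guaranteeing wellfoundedness of the direct limit is precisely where the $\omega_1+1$-iterability of $\Lambda$ and \textsf{DC} are indispensable.
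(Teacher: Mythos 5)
The paper does not prove this theorem at all: it is imported verbatim from Neeman \cite{neeman1995optimal} and used as a black box in the proof of Theorem \ref{thm:det}, so your attempt can only be measured against Neeman's published argument. In outline you reproduce that argument's architecture faithfully: extender-algebra genericity iterations at the Woodin cardinal $\delta$, reduction to an auxiliary game $G^{*}$ whose witness-maintenance side is closed, Gale--Stewart determinacy of $G^{*}$ (with moves from a wellorderable set, so \textsf{DC} suffices in the paper's choiceless context), the use of $\omega_1+1$-iterability to keep limit models wellfounded, and a transfer of strategies back to $G_A$. The I-side transfer is also handled correctly: a completed witness yields $g$ with $x \in N[g]$ and $x \in i(\tau)_g = A \cap N[g]$.

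The genuine gap is exactly where you flag it, and the sketch you offer there is circular rather than merely incomplete. From II winning $G^{*}$ you argue: were $x \in A$, ``some condition would force $\dot{x}_{\mathrm{gen}} \in \tau$ and the witness would have been available --- contradiction.'' But II's win in $G^{*}$ only says that I cannot maintain a witness \emph{coherently, move by move, against II's adaptive play}; the post hoc existence of a single condition over some iterate forcing $\check{x} \in i(\tau)$ does not retroactively assemble such a coherent infinite witness, so no contradiction follows. Moreover, as you set it up, $G^{*}$ is asymmetric --- only I builds a witness --- so II winning carries no forcing information about $x$ at all. Neeman's proof closes this hole with two ingredients you omit: the \emph{dual term} $\tau'$ for the complement (definable from $\tau$, since by hypothesis (3) $i(\tau')_g$ is the set of reals of $N[g]$ not in $i(\tau)_g$, hence equals $\neg A \cap N[g]$), and the \emph{pivot} construction, whose engine is the one-step lemma at $\delta$: given any condition and any extender-algebra axiom violated by the real being played, Woodinness supplies an extender whose application lets the condition be extended compatibly. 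Running pivots alongside any play of $x$ against the strategy induced by II's auxiliary strategy produces an iteration $i \colon M \to N$ and a generic $g$ with $x \in N[g]$ such that II's auxiliary moves embed into a descending sequence of conditions in $g$ forcing $\check{x} \in i(\tau')$, whence $x \notin A$. Establishing that this interleaved construction can always be carried through is the actual content of the theorem; without it your proposal is a correct plan, not a proof.
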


\begin{proof}[Theorem \ref{thm:det}]
Let $A$ be Suslin co-Suslin. By $\sf{HPC}$, let $(\mathcal{P},\Sigma)$ be an lbr hod pair such that $A$ is Wadge reducible to $Code(\Sigma)$. Let $x\in \mathbb{R}$ witness this; we let $\sigma_x$ be the continuous function given by $x$ such that $\sigma_x^{-1}[Code(\Sigma)] = A$.\footnote{We can construe $x$ as a function ${}^{<\omega}\omega \rightarrow {}^{<\omega}\omega$ and this naturally gives rise to a continuous (in fact Lipschitz) function $\sigma_x: {}^{\omega}\omega \rightarrow {}^{\omega}\omega$.} By Theorem \ref{thm:lbrScS}, $Code(\Sigma)$ is Suslin co-Suslin and $\mathcal{M}_2^{\Sigma,\sharp}$ exists. Let $\Lambda$ be the canonical iteration strategy for $\mathcal{M}_2^{\Sigma,\sharp}$ and $\delta_0<\delta_1$ be the Woodin cardinals of $\mathcal{M}_2^{\Sigma,\sharp}$. Let $\mathcal{T}$ be an iteration tree with the following properties:
\begin{itemize}
\item $\mathcal{T}$ is according to $\Lambda$.
\item Letting $i: \mathcal{M}_2^{\Sigma,\sharp} \rightarrow \mathcal{N}$ be the corresponding iteration embedding, then $x$ is $\mathcal{N}$-generic for the extender algebra at $i(\delta_0)$.\footnote{The reader can see \cite{steel2010outline} for more detailed discussions about the extender algebra and genericity iteration trees. All we need here is the fact there is some forcing $\mathbb{Q}\in \mathcal{N}$ such that card$^\mathcal{N}(\mathbb{Q}) \leq i(\delta_1)$ and such that there is some $\mathcal{N}$-generic filter $h\subset \mathbb{Q}$ such that $x\in \mathcal{N}[h]$.}
\end{itemize}
Now we can construe $\mathcal{N}[x]$ as a $\Sigma$-mouse over $x$, which we will call $\mathcal{M}$. Note that $i(\delta_1)$ is a Woodin cardinal of $\mathcal{M}$ and $\Lambda$ induces a strategy $\Psi$ on $\mathcal{M}$.

We note that $(\mathcal{M},\Psi, i(\delta_1))$ satisfies the hypothesis of Theorem \ref{thm:termCap} for $A$. Let $\tau_\Sigma$ is given as in Lemma \ref{lem:term} for $(\mathcal{M}_2^{\Sigma,\sharp},\Lambda,\delta_1)$, then $i(\tau_\Sigma)$ induces a term $\sigma_\Sigma\in M^{Coll(\omega,i(\delta_1))}$ satisfying (3) of Theorem \ref{thm:termCap}. Let $\mathbb{P} = Coll(\omega,i(\delta_1))$. The term $\tau$ consists of $(1_{\mathbb{P}},\sigma)$ where $1_{\mathbb{P}}\Vdash_{\mathbb{P}} ``\sigma$ is a real and  $\sigma \in \sigma_x^{-1}[\sigma_\Sigma]$". 

By Theorem \ref{thm:termCap}, $A$ is determined. This completes the proof of Theorem \ref{thm:det}. \qed
\end{proof}

We conjecture that $\sf{HPC}$ is not needed in Theorem \ref{thm:det}. \cite{trang2016determinacy} has shown that $\omega_1$ is supercompact implies that all sets in $L(\mathbb{R})$ are Suslin co-Suslin and are determined and much more.\footnote{\cite{trang2016determinacy} shows that there is a transitive class model $M$ containing all reals such that $M \vDash \sf{AD}_\mathbb{R} + \sf{DC}$.} One may hope to prove Conjecture \ref{conj:ScSdet} by showing that every Suslin co-Suslin set is homogeneously Suslin. Theorem \ref{thm:weak_homo} shows that every Suslin co-Suslin set is a projection of a homogenously Suslin, hence determined, set.

\begin{conjecture} \label{conj:ScSdet}
Assume $\omega_1$ is supercompact. For any Suslin co-Suslin set $A$, $A$ is determined.
\end{conjecture}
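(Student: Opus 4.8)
The plan is to prove that every Suslin co-Suslin set $A$ is in fact \emph{homogeneously} Suslin, and then to invoke Martin's theorem that homogeneously Suslin sets are determined. Since $\mathsf{DC}$ holds by Theorem~\ref{thm:choice}, Martin's unraveling argument is available in our $\mathsf{ZF}$ setting, so homogeneity of $A$ (or of its complement) would immediately yield determinacy of $A$. The point to keep in mind throughout is that Theorem~\ref{thm:weak_homo} only delivers that $A$ is the \emph{projection} of a homogeneously Suslin set, and projections of determined sets need not be determined; the entire difficulty lies in upgrading weak homogeneity to genuine homogeneity.

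Concretely, I would first fix trees $T$ and $\tilde{T}$ on $\omega \times \lambda$ with $A = \mathrm{p}[T]$ and $\neg A = \mathrm{p}[\tilde{T}]$. By Theorem~\ref{thm:weak_homo} every tree is weakly homogeneous, so $T$ witnesses that $A$ is weakly homogeneously Suslin, via an $\omega_1$-complete weak homogeneity system $\langle M_s \mid s \in \omega^{<\omega}\rangle$. Next I would run the Martin--Solovay construction on $T$ and this system to build the Martin--Solovay tree $\mathrm{ms}(T)$, using $\mathsf{DC}$ to guarantee that the relevant ultrapowers are well-founded; exactly as in the alternative argument for the $(\Rightarrow)$ direction of Theorem~\ref{thm:equiv}, this re-establishes $\mathrm{p}[\mathrm{ms}(T)] = \neg A$. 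The target is then to verify that $\mathrm{ms}(T)$ is itself \emph{homogeneous}, i.e.\ that its canonical system of measures induces countably complete towers along every branch in $\neg A$. If this succeeds, then $\neg A$ is homogeneously Suslin, hence determined by Martin's theorem, and therefore $A$ is determined.

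The hard part is precisely the homogeneity of $\mathrm{ms}(T)$. In the classical Martin--Solovay argument, countable completeness of the towers on $\mathrm{ms}(T)$ is extracted by applying a measure whose completeness exceeds the suprema of the ordinals carrying the measures of the weak homogeneity system of $T$; in $\mathsf{ZFC}$ one uses a measurable cardinal lying above these supports. Here Theorem~\ref{thm:weak_homo} supplies only $\omega_1$-complete measures concentrating on subsets of $\lambda^n$, whose supports far exceed $\omega_1$, and the supercompactness of $\omega_1$ provides fineness and normality but no measurable cardinal above the supports needed to close the construction. Manufacturing, from the supercompactness measures alone, a genuine homogeneity system on $\mathrm{ms}(T)$ is the crux, and it is exactly this step that $\mathsf{HPC}$ circumvents in Theorem~\ref{thm:det} by producing a hod pair carrying Woodin cardinals and invoking term-capturing (Lemma~\ref{lem:term}) together with Neeman's theorem (Theorem~\ref{thm:termCap}). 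I therefore expect that a choice-free resolution will require either extracting AD-style structure, namely the closure of the Suslin cardinals below $\Theta$, which by Theorems~\ref{thm:ADSuslin} and~\ref{thm:equiv} is tied to $\mathsf{AD}^+$, or a genuinely new method for producing the requisite completeness above the weak homogeneity measures.
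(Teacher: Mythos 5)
You have not proved the statement, and neither does the paper: this is Conjecture~\ref{conj:ScSdet}, which the paper explicitly leaves open, proving only the conditional Theorem~\ref{thm:det} (determinacy of Suslin co-Suslin sets from supercompactness of $\omega_1$ \emph{plus} $\mathsf{HPC}$, via lbr hod pairs, term capturing as in Lemma~\ref{lem:term}, and Neeman's Theorem~\ref{thm:termCap}). Your proposal is a plan, not a proof, and its decisive step is exactly the missing idea. The first two paragraphs are a correct and standard reduction: under $\mathsf{DC}$ (Theorem~\ref{thm:choice}), homogeneously Suslin sets are determined, and since the class of Suslin co-Suslin sets is self-dual and closed under continuous preimages, it suffices to show that every such set (equivalently, its complement) is homogeneously Suslin; this is precisely the route the paper itself suggests in the remark following Theorem~\ref{thm:det}. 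But the third paragraph, where the actual work would have to happen, contains no argument: you state that "the target is then to verify that $\mathrm{ms}(T)$ is itself homogeneous" and then explain why the classical method for doing so is unavailable. That is a diagnosis of the obstruction, not a resolution of it.

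Concretely, the gap is this: in $\mathsf{ZFC}$ one converts weak homogeneity into homogeneity of the Martin--Solovay tree by exploiting a measurable (in practice, a Woodin) cardinal lying \emph{above} the supports of the weak homogeneity measures, which supplies the countable completeness of the towers along branches of $\mathrm{ms}(T)$. Here the measures produced by Theorem~\ref{thm:weak_homo} live on subsets of $\lambda^{n}$ for arbitrarily large $\lambda$, and supercompactness of $\omega_1$ gives no completeness above those supports; nothing in the paper's toolkit (fineness, normality, $\mathsf{DC}$, well-orderability of the measure spaces from Section~\ref{sec:weakhomo}) substitutes for it. Without that step, your construction delivers only what Theorem~\ref{thm:weak_homo} already gives --- that $\neg A$ is Suslin via $\mathrm{ms}(T)$ and that $A$ is a projection of a homogeneously Suslin set --- which, as you yourself note, does not yield determinacy of $A$. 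So the proposal terminates exactly at the boundary of what is known; closing it would require either the inner-model-theoretic detour through $\mathsf{HPC}$ that the paper takes, or a genuinely new mechanism for manufacturing completeness above the weak homogeneity measures, and no such mechanism is exhibited.
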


%
%
%
%
\bibliographystyle{spmpsci}
\bibliography{omega1spct.bib}

\begin{thebibliography}{10}
\providecommand{\url}[1]{{#1}}
\providecommand{\urlprefix}{URL }
\expandafter\ifx\csname urlstyle\endcsname\relax
  \providecommand{\doi}[1]{DOI~\discretionary{}{}{}#1}\else
  \providecommand{\doi}{DOI~\discretionary{}{}{}\begingroup
  \urlstyle{rm}\Url}\fi

\bibitem{Ikegami_PhD}
Ikegami, D.: {G}ames in {S}et {T}heory and {L}ogic.
\newblock Ph.D. thesis, Institute for Logic, Language and Computation,
  Universiteit von Amsterdam  (2010).
\newblock
  \urlprefix\url{http://www.sic.shibaura-it.ac.jp/%7Eikegami/thesis-latest.pdf}

\bibitem{jackson2010structural}
Jackson, S.: Structural consequences of ad.
\newblock In: Handbook of set theory, pp. 1753--1876. Springer (2010)

\bibitem{Jech}
Jech, T.: {$\omega _{1}$} can be measurable.
\newblock Israel J. Math. \textbf{6}, 363--367 (1969) (1968)

\bibitem{ketchersid2011more}
Ketchersid, R.: More structural consequences of ad.
\newblock Contemporary Mathematics \textbf{533}, 71--105 (2011)

\bibitem{martin2008weakly}
Martin, D.A., Woodin, W.H.: Weakly homogeneous trees.
\newblock In: Games, Scales, and Suslin Cardinals: The Cabal Seminar, vol.~1,
  pp. 421--438 (2008)

\bibitem{neeman1995optimal}
Neeman, I.: Optimal proofs of determinacy.
\newblock Bulletin of Symbolic Logic \textbf{1}(3), 327--339 (1995)

\bibitem{hod_mice_LSA}
Sargsyan, G., Trang, N.: The largest Suslin axiom.
\newblock Submitted.
\newblock \urlprefix\url{http://sites.math.rutgers.edu/%7Egs481/lsa.pdf}

\bibitem{GammaUB}
Sargsyan, G., Trang, N.: The exact consistency strength of the generic
  absoluteness for the universally baire sets  (2019).
\newblock In preparation

\bibitem{scales_hybrid_mice}
Schlutzenberg, F., Trang, N.: Scales in hybrid mice over $\mathbb{R}$  (2016).
\newblock \urlprefix\url{https://www.math.uci.edu/%7Entrang}.
\newblock Submitted.

\bibitem{steel2010outline}
Steel, J.R.: An outline of inner model theory.
\newblock Handbook of set theory pp. 1595--1684 (2010)

\bibitem{normalization_comparison}
Steel, J.R.: Normalizing iteration trees and comparing iteration strategies
  (2016).
\newblock
  \urlprefix\url{https://math.berkeley.edu/%7Esteel/papers/Publications.html}

\bibitem{mouse_pairs}
Steel, J.R.: Mouse pairs and suslin cardinals  (2018).
\newblock
  \urlprefix\url{https://math.berkeley.edu/%7Esteel/papers/Publications.html}

\bibitem{Takeuti}
Takeuti, G.: A relativization of axioms of strong infinity to {$\omega _{1}$}.
\newblock Ann. Japan Assoc. Philos. Sci. \textbf{3}, 191--204 (1970) (1970)

\bibitem{LRmu}
Trang, N.: Determinacy in {$L(\Bbb{R},\mu)$}.
\newblock J. Math. Log. \textbf{14}(1), 1450,006, 23 (2014).
\newblock \doi{10.1142/S0219061314500068}.
\newblock \urlprefix\url{https://doi.org/10.1142/S0219061314500068}

\bibitem{trang2011determinacy}
Trang, N.: Determinacy in ${L}(\mathbb{R},\mu)$.
\newblock Journal of Mathematical Logic \textbf{14}(1) (2014)

\bibitem{trang2015structure}
Trang, N.: Structure theory of ${L}(\mathbb{R}, \mu)$ and its applications.
\newblock The Journal of Symbolic Logic \textbf{80}(01), 29--55 (2015)

\bibitem{trang2016determinacy}
Trang, N., Wilson, T.: Determinacy from strong compactness of $\omega_1$.
\newblock arXiv preprint arXiv:1609.05411  (2016)

\bibitem{wilson2012contributions}
Wilson, T.M.: Contributions to descriptive inner model theory.
\newblock Ph.D. thesis, University of California (2012).
\newblock Available at author's website.

\end{thebibliography}

\end{document}